\documentclass[hidelinks,onefignum,onetabnum]{siamart220329}



\usepackage{lipsum}
\usepackage{amsfonts}
\usepackage{graphicx}
\usepackage{epstopdf}
\usepackage{algorithmic}
\usepackage{mathtools}
\usepackage{amsmath}
\usepackage{amssymb}
\usepackage{tikz-cd}
\usepackage{bm}
\usepackage[cal=boondox]{mathalfa}
\newcommand{\rev}[1]{{\color{blue} #1}}

\newtheorem{assumption}[theorem]{Assumption}
\newtheorem{example}[theorem]{Example}
\ifpdf
  \DeclareGraphicsExtensions{.eps,.pdf,.png,.jpg}
\else
  \DeclareGraphicsExtensions{.eps}
\fi


\newsiamremark{remark}{Remark}
\newsiamremark{hypothesis}{Hypothesis}
\crefname{hypothesis}{Hypothesis}{Hypotheses}
\newsiamthm{claim}{Claim}
\newsiamthm{observation}{Observation}
\headers{MFEM for Linear Cosserat Equations}{W. M. Boon, O. Duran and J. M. Nordbotten}

\title{Mixed finite element methods for linear 
Cosserat equations\thanks{Submitted to the editors DATE.
\funding{European Research Council (ERC) under the European Union’s Horizon 2020 research and innovation program (grant agreement No 101002507)}}}

\author{W. M. Boon\thanks{MOX Laboratory, Department of Mathematics, Politecnico di Milano, Italy.}
\and O. Duran\thanks{Centre for Modeling of Coupled Subsurface Dynamics, Department of Mathematics, 
University of Bergen, Norway}
\and J. M. Nordbotten\footnotemark[3] \thanks{Norwegian Research Center, Postboks 22 Nygårdgaten, 5838, Bergen, Norway}}
 
\usepackage{amsopn}

\usepackage{enumitem}


\ifpdf
\hypersetup{
  pdftitle={Mixed finite element methods for linear Cosserat equations},
  pdfauthor={W. M. Boon, O. Duran and J. M. Nordbotten} 
}
\fi




\begin{document}

\maketitle

\begin{abstract}
    We consider the equilibrium equations for a linearized Cosserat material and provide two perspectives concerning well-posedness. First, the system can be viewed as the Hodge Laplace problem on a differential complex.
    On the other hand, we show how the Cosserat materials can be analyzed by inheriting results from linearized elasticity. Both perspectives give rise to mixed finite element methods, which we refer to as strongly and weakly coupled, respectively. We prove convergence of both classes of methods, with particular attention to improved convergence rate estimates, and stability in the limit of vanishing characteristic length of the micropolar structure. The theoretical results are fully reflected in the actual performance of the methods, as shown by the numerical verifications.  
\end{abstract}

\begin{keywords}
Micro-polar materials, Hellinger-Reissner formulation, conforming finite elements
\end{keywords}

\begin{MSCcodes}
68Q25, 68R10, 68U05
\end{MSCcodes}

\section{Introduction}
In this paper, we will study the equations for Cosserat materials in three spatial dimensions \cite{b1,b2}, also known as (micro-)polar materials among engineers \cite{b3}. Briefly speaking, these are an extension of the equations of linearized elasticity, by considering local rotations as an independent variable compared to the displacement. This additional generality afforded by Cosserat materials is attractive when modeling complex (and potentially multiscale) materials, such as granular media, cellular and crystalline solids, and composite porous media \cite{b4}.

Previous work on the numerical approximation of the Cosserat equations have been primarily considered with methods associated with the two-field formulation in terms of displacement and rotation. For an early application, see \cite{b5}. More recent contributions include \cite{zhou2015tetrahedral} which concerns tetrahedral finite elements and \cite{grbvcic2018variational,xie2019improving,b6} in which hexahedral elements are considered. We moreover mention the pseudo-spectral method proposed in \cite{tornabene2017mechanical} and the discrete element method of \cite{marazzato2021variational}. Our interest is in the formulation as a four-field system, wherein in addition to the aforementioned variables, also the mechanical stress and the so-called couple stress (associated with spatial variations in rotation) are explicitly represented in the discretization. In terms of regular elastic materials, it is well-known that such formulations with explicit stress variables avoid locking phenomena associated with nearly incompressible materials (see e.g.~\cite{b7,b8,b9}). Herein, we prove that in the context of Cosserat materials, the four-field formulation can be constructed so that it is robust also in the limit of vanishing characteristic length of the micropolar structure. This allows us to robustly model composite media consisting of elastic and Cosserat materials. 

Our contributions in this regards are as follows: 
\begin{enumerate}
 \item We use the observation from \cite[Sec. 5.2]{BGGSequences} that the Cosserat equations can be seen as the Hodge-Laplace equations on the \emph{Cosserat complex}. The $L^2$ version of the Cosserat complex provides the proper setting to prove well-posedness. 
 Following finite element exterior calculus, we propose a family of stable and convergent mixed finite element discretizations which we refer to as \emph{strongly coupled}. We moreover show that the discrete solution exhibits improved convergence beyond the standard a priori rates.
 \item	We also provide an alternative proof of well-posedness of the four-field formulation of the Cosserat equations using Ladyzhenskaya–Babuška–Brezzi (LBB) theory, through a relationship to the Hellinger–Reissner formulation of elasticity with so-called weak symmetry. This relationship shows that the Cosserat equations can be treated as a continuous generalization of the elasticity equations. 
 Using this perspective, we propose a family of \emph{weakly coupled} mixed finite element discretizations that are stable in the continuous transition between Cosserat and elastic materials.
\end{enumerate}

The rest of the paper is structured as follows. In the next section, we introduce the relevant notation and spaces, and state the linear Cosserat equations and their relationship to linearized elasticity. Thereafter, in Section \ref{sec3}, we elaborate the construction of the Cosserat complex, and the resulting discretization. In Section \ref{sec4}, we elaborate the well-posedness proof of the Cosserat equation by use of the LBB theory, and the resulting discretization. Sections \ref{sec3} and \ref{sec4} can be read independently of each other. In Section \ref{sec5}, the expected optimal-order convergence rates and stability properties are demonstrated in numerical examples. 

\section{Notation and problem definition}
\label{sec:main}

We first summarize the notation needed for the later developments, whereafter we state the equations of linear Cosserat materials. 

In the interest of space and clarity, we will exclusively detail the case of three spatial dimensions, wherein both displacement and rotation are characterized by three-dimensional vectors. The reduction to the simpler case of 2D problems is  obtained by considering line symmetry along the third dimension, whereafter the problem reduces to being characterized by a two-dimensional displacement vector, together with a single scalar rotation variable around the out-of-plane dimension. 

\subsection{General notation}

As a preliminary remark, we note that we will as far as possible adhere to the standard conventions of “vector-matrix calculus”, as opposed to either tensor or exterior calculus. This choice it justified by avoiding the proliferation of indices in tensor calculus, and by being accessible to a broader audience than exterior calculus. 

Throughout the paper, we will consider the domain $\Omega \subset \mathbb{R}^3$ to be open, bounded, and contractible, and we will consistently denote the spatial coordinate as $x\in \Omega$. As the domain will not change, we will often suppress the dependence on $\Omega$ in the statement of function spaces. That said, we denote spaces of vector fields by $\mathbb{V} \coloneqq \{v:\Omega\to \mathbb{R}^3 \}$, and spaces of matrix fields by $\mathbb{M} \coloneqq \{\sigma : \Omega \to \mathbb{R}^{3\times 3} \}$. To codify material constants, we will also need \rev{four-tensor fields} (i.e. mappings between matrices), the space of which we denote as  $\mathbb{A}=\{A : \mathbb{M}\to \mathbb{M} \}$. As indicated, we will in general use lower-case Latin letters for scalars and vectors, Greek letters for matrices, and capital Latin letters for fourth-order tensors. A notable exception to this convention is the parameterization indicated in Example \ref{ex2.4}, where $\mu$ and $\lambda$ are Lamé parameters, which is used throughout Section \ref{sec5}. 

Unless stated otherwise, we assume that the spaces have “sufficient” regularity for the relevant expressions to make sense. On the other hand, when the regularity of the fields needs to be treated explicitly, this is indicated by prefaces, e.g. $L^2 \mathbb{V}\coloneqq \{v\in \mathbb{V} : \int_\Omega v \cdot v \, dV < \infty \}$. 

For a vector field $v\in \mathbb{V}$, with components $v_i$ and a matrix field $\sigma \in \mathbb{M}$ with components $\sigma_{i,j}$, we define the vector generalization of gradient and divergence, together with the \emph{asymmetry operator} $S : \mathbb{M} \to \mathbb{V}$, as
\begin{align}\label{eq2.1}
(\nabla v)_{i,j} &= \frac{\partial v_i}{\partial x_j}, 
& (\nabla \cdot \sigma)_i  &= \sum_j\frac{\partial \sigma_{i,j}}{\partial x_j}, 
& (S\sigma)_i &= \sigma_{i-1,i+1}-\sigma_{i+1,i-1}.
\end{align}
In the last definition, indices are understood modulo 3.  

By standard arguments \cite{b10}, we generalize the definition of the differential operators to their weak counterparts, which we still denote as $\nabla$ for the gradient and $\nabla\cdot$ for the divergence. These are densely defined in $L^2 \mathbb{V}$ and $L^2 \mathbb{M}$, respectively and their domains of definition are the vector extensions of $H^1 (\Omega)$ and $H(\nabla \cdot;\Omega)$, namely
\begin{align}
    H(\nabla; \mathbb{V}) &\coloneqq \left\{ v \in L^2 \mathbb{V} : \nabla v \in L^2 \mathbb{M}\right\}, &
    H(\nabla\cdot; \mathbb{M}) &\coloneqq \left\{ \sigma \in L^2 \mathbb{M} : \nabla \cdot \sigma \in L^2 \mathbb{V}\right\}\rev{.}
\end{align}

For all variables, we denote the standard $L^2$ inner product with parentheses, and the norm with double vertical strokes. As an example, for $\sigma,\tau \in L^2 \mathbb{M}$:
\begin{align}\label{eq2.2}
(\sigma, \tau)&\coloneqq\int_\Omega \sigma : \tau \, dV, & \|\sigma\|&\coloneqq(\sigma, \sigma)^{1/2},
\end{align}
where the matrix double dot product $:$ indicates the sum of element-wise products. 

As a convention, we will omit the subscript on the norm if the $L^2$ norm is used, and include subscripts whenever a different norm is considered. A second convention is that we will denote by the letter $0<C<\infty$ a generic computable constant. At various points, it will be emphasized within a concrete context what $C$ depends on. 

For conciseness, we will primarily consider homogeneous boundary conditions on the displacement and rotation fields, and we thus summarize the following adjoint, or integration by parts, relationships for $(v,\sigma) \in \mathbb{V} \times \mathbb{M}$ with $v = 0$ on $\partial \Omega$:
\begin{align}\label{eq2.3}
(\nabla \cdot \sigma , v)=(\sigma ,-\nabla v).
\end{align}

Similarly, we denote the adjoint (with respect to the standard $L^2$ inner product) of $S$ from \eqref{eq2.1} as $S^*$, which has the explicit form
\begin{align}\label{eq2.4}
S^*v=\begin{pmatrix}0&-v_3&v_2 \\ v_3&0&-v_1 \\ -v_2&v_1&0 \end{pmatrix}.
\end{align}

We note that in this case the adjoint is also (up to a factor 2) the right inverse of $S$, so that $SS^* v=2v$. Moreover, $S^*S \sigma = \sigma - \sigma^T$, which is double the ``$\operatorname{skew}$'' operator.

Finally, we will often consider a pair of vector or matrix spaces on $\Omega$. We denote these by script font, so that 
\begin{align}
    \mathcal{V} &\coloneqq \mathbb{V}\times\mathbb{V}
    & &\textrm{and} 
    & \mathcal{M} &\coloneqq \mathbb{M}\times\mathbb{M}.
\end{align}
All the previous definitions and conventions carry over to these compound spaces. 

For the exposition of mixed finite element spaces, we will need the definition of the relevant families of discrete subspaces for $H(\nabla\cdot;\mathbb{M})$ and $L^2(\Omega)$. Thus given a simplicial tesselation $\Omega_h$ of $\Omega$, we denote the standard mixed finite element spaces according to the following conventions \cite{b11}: 
\begin{itemize}
    \item The space $P_{-k}\left(\Omega_h\right)\subset L^2(\Omega)$ consists of $k$-order polynomials on each 3-simplex $\Delta\in\Omega_h$. 
    \item The space $P_k\left(\Omega_h\right)\coloneqq P_{-k}\Omega_h \cap C^0(\mathbb{R})$ consists of $k$-order polynomials on each 3-simplex $\Delta \in\Omega_h$ that are continuous across all element boundaries. 
	\item The Brezzi-Douglas-Marini \cite{brezzi1985two} space of order $k$ is defined as 
    \begin{align*}
        {BDM}_k\left(\Omega_h\right)\coloneqq P_{-k}\left(\Omega_h\right)^3\cap H(\nabla\cdot;\mathbb{M}).
    \end{align*}
	\item The Raviart-Thomas \cite{raviart2006mixed} space of order $k$ is defined as 
    \begin{align*}
    {RT}_k\left(\Omega_h\right)
    \coloneqq  \left(P_{-k}\left(\Omega_h\right)^3\oplus{\hat{P}}_{-k}\left(\Omega_h\right)\bm{x}\right)\cap H(\nabla\cdot;\mathbb{M}),
    \end{align*}
    where ${\hat{P}}_{-k}=P_{-k}\setminus P_{-\left(k-1\right)}$ are the homogeneous polynomials of exactly order $k$, and $\bm{x}$ is the position $x$ interpreted as a vector field.
\end{itemize}

We note that the convention for enumerating the $RT$-spaces varies in literature, here we adopt the most common convention where the lowest-order space is indexed as $RT_0$. 

\subsection{The linearized Cosserat equations}
\label{sec2.1}

The Cosserat model of an elastic material concerns both (macroscopic) displacements $u$, as well as local (microscopic) rotations $r$ as primary state variables. These are associated with a linearized elastic stress field $\sigma$ and, respectively, a “couple stress” field $\omega$. In their linearized form, the model can be written as a set of four first-order equations in terms of the variables $\left(\sigma,\omega,u,r\right)\in\mathbb{M}\times\mathbb{M}\times\mathbb{V}\times\mathbb{V}$. The strong form of these equations is as follows:
\begin{subequations}\label{eq2.5}
\begin{align}
\textrm{Constitutive law for linearized elastic stress:}&
& A_\sigma\sigma &= \nabla u+S^\ast r, \label{eq2.5a}\\
\textrm{Constitutive law for linearized couple stress:}&
& A_\omega\omega &= \nabla r, \label{eq2.5b}\\
\textrm{Balance of linear momentum:}&
& -\nabla\cdot\sigma &= f_u, \label{eq2.5c}\\	
\textrm{Balance of angular momentum:}&
& -\nabla\cdot\omega+S\sigma &= f_r, \label{eq2.5d}			
\end{align}
    subject to the boundary conditions:
    \begin{align} \label{eq: bc}
        u &= 0, &
        r &= 0, &
        \textrm{on }& \partial \Omega.
    \end{align}

\end{subequations}

We see that the model equations depend on material tensors $A_\sigma$ and $A_\omega$. Based on physical arguments \cite{b12,b13,b4,b3}, we expect these to have the properties summarized in Assumptions \ref{a2.1}-2.3 below.

\begin{assumption}[Elastic material tensor]\label{a2.1}
    The elastic material tensor $A_\sigma\in\mathbb{A}$ is pointwise symmetric positive definite, and satisfies uniform upper and lower bounds, i.e. for any $\sigma,\tau\in L^2\mathbb{M}$ the following three statements hold with $A_\sigma^-, A_\sigma^+ \in \mathbb{R}$:
    \begin{align*}
        \left(A_\sigma\sigma,\tau\right) &= \left(A_\sigma\tau,\sigma\right),	
        & \frac{(A_\sigma \sigma,\sigma)}{\|\sigma\|^2} &\ge A_\sigma^->0, 
        & \frac{(A_\sigma \sigma,\tau)}{\|\sigma\|\|\tau\|} &\le A_\sigma^+<\infty.  
    \end{align*}
\end{assumption}

\begin{assumption}[Non-degenerate couple stress tensor]\label{a2.2a}
    The couple stress material tensor $A_\omega\in\mathbb{A}$ is pointwise symmetric positive definite, and satisfies uniform upper and lower bounds, i.e. for any $\omega,\rho\in L^2\mathbb{M}$ the following three statements hold with $A_\omega^-, A_\omega^+ \in \mathbb{R}$: 
    \begin{align*}
        \left(A_\omega\omega,\rho\right)&=\left(A_\omega\rho,\omega\right),	
        & \frac{(A_\omega \omega,\omega)}{\|\omega\|^2} &\ge A_\omega^->0, 
        & \frac{(A_\omega \omega,\rho)}{\|\omega\|\|\rho\|} &\le A_\omega^+<\infty.  
    \end{align*}
\end{assumption}

An important degeneracy is the limiting case when a Cosserat material reduces to a standard linearly elastic material, as we will detail in Sections \ref{sec2.3} and \ref{sec4}. This requires a weaker assumption on the material tensor for the couple stress: 

\begin{assumption}[Degenerate couple stress tensor]\label{a2.2b} The inverse of the couple stress material tensor $A_\omega$ can be decomposed as
    \begin{align*}
        A_\omega^{-1}=\ell^2{\tilde{A}}_\omega^{-1},
    \end{align*}
in which 
\begin{itemize}
\item $\ell\in P_1\left(\Theta_H\right)$ is a scalar function with $\Theta_H$ being a non-overlapping partitioning of the domain, satisfying (almost everywhere)
\begin{align*}
    0&\le\ell\le1, & \left|\nabla\ell\right|&\le C_\ell < \infty.
\end{align*}
\item ${\tilde{A}}_\omega \in \mathbb{A}$ is pointwise symmetric positive definite and satisfies uniform upper and lower bounds, i.e.~for any $\omega,\rho\in L^2\mathbb{M}$ the following three statements hold with $A_\omega^-, A_\omega^+ \in \mathbb{R}$: 
\begin{align*}
    ({\tilde{A}}_\omega\omega,\rho)&=({\tilde{A}}_\omega\rho,\omega),	
    & \frac{({\tilde{A}}_\omega \omega,\omega)}{\|\omega^2\|} &\ge A_\omega^->0, 
    & \frac{({\tilde{A}}_\omega \omega,\rho)}{\|\omega\|\|\rho\|} &\le A_\omega^+<\infty.   
\end{align*}
\end{itemize}
\end{assumption}

\begin{corollary}[Invertibility of material tensors]\label{cor2.3} Whenever Assumption \ref{a2.1} holds, $A_\sigma^{-1}$ is well defined, symmetric positive definite, and has upper and lower bounds (given by  $\left(A_\sigma^-\right)^{-1}$ and $\left(A_\sigma^+\right)^{-1}$, respectively). Similarly, if Assumption \ref{a2.2a} (or \ref{a2.2b}) holds, $A_\omega^{-1}$ (or ${\tilde{A}}_\omega^{-1}$), is symmetric positive semi-definite, with upper and lower bounds. 
\end{corollary}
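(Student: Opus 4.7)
The plan is to reduce the statement to pointwise linear algebra on the nine-dimensional inner-product space $\mathbb{R}^{3\times 3}$ (equipped with the Frobenius inner product induced by the double-dot product in \eqref{eq2.2}). By Assumption~\ref{a2.1}, at each $x\in \Omega$ the operator $A_\sigma(x):\mathbb{R}^{3\times 3}\to \mathbb{R}^{3\times 3}$ is symmetric, and the integrated bounds—being required to hold for all $\sigma,\tau\in L^2\mathbb{M}$—force the pointwise spectrum of $A_\sigma(x)$ to lie in $[A_\sigma^-,A_\sigma^+]$. Indeed, if the lower bound were violated on a set of positive measure, one could test against a matrix field supported on that set (with the offending eigendirection at each point) and contradict the uniform estimate; an analogous argument handles the upper bound.

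Given this spectral localization, the pointwise inverse $A_\sigma^{-1}(x)$ exists, is symmetric, and has spectrum contained in $[(A_\sigma^+)^{-1},(A_\sigma^-)^{-1}]$ by the standard spectral theorem on a finite-dimensional Euclidean space. Integrating the resulting pointwise Rayleigh-quotient bounds over $\Omega$ then yields the corresponding $L^2$ inequalities, and symmetry of $A_\sigma^{-1}$ in the $L^2$ inner product follows from pointwise symmetry together with Fubini. This establishes the first claim.

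For $A_\omega$ under Assumption~\ref{a2.2a}, the argument is verbatim identical, only with $(\sigma,\tau)$ replaced by $(\omega,\rho)$ throughout. Under the degenerate Assumption~\ref{a2.2b}, the inverse is already given in the factored form $A_\omega^{-1}=\ell^2 \tilde{A}_\omega^{-1}$, so no inversion is required. Since $\ell^2(x)\ge 0$ is a scalar multiplier, $A_\omega^{-1}$ inherits symmetry from $\tilde{A}_\omega^{-1}$; positivity, however, is only obtained in the semi-definite sense because $\ell$ is permitted to vanish. The upper bound follows from $0\le \ell\le 1$ combined with the uniform bound on $\tilde{A}_\omega^{-1}$, giving $(A_\omega^{-1}\omega,\omega)\le A_\omega^+\|\omega\|^2$.

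I do not anticipate any genuine obstacle: the result is a routine consequence of the finite-dimensional spectral theorem applied fibrewise. The only step that deserves explicit justification is the passage from the $L^2$-integrated bounds in Assumptions~\ref{a2.1} and~\ref{a2.2a} to pointwise spectral information, which is a standard localization argument but must be spelled out once to cover both corollaries with a single derivation.
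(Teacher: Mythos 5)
The paper states Corollary~\ref{cor2.3} without any proof, treating it as a standard consequence of the assumptions, so there is no argument of record to compare against; your fibrewise spectral argument is correct in substance and would fill that gap. Two remarks. First, the one step you rightly flag --- passing from the integrated Rayleigh-quotient bounds to pointwise spectral containment in $[A_\sigma^-,A_\sigma^+]$ --- requires a measurable selection of eigenvector fields on the hypothetical exceptional set; this is standard but should be cited or sketched. Alternatively, you can avoid localization entirely by working at the level of the Hilbert space $L^2\mathbb{M}$: Assumption~\ref{a2.1} says $A_\sigma$ is a bounded, self-adjoint, coercive operator, so Lax--Milgram gives bounded invertibility and the spectral mapping theorem places the spectrum of $A_\sigma^{-1}$ in $[(A_\sigma^+)^{-1},(A_\sigma^-)^{-1}]$, which for self-adjoint operators is exactly the claimed numerical-range bound. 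Your pointwise route is, however, the more faithful one here, since it shows $A_\sigma^{-1}$ is again a four-tensor \emph{field} (a local multiplication operator), which the paper uses implicitly, e.g.\ in the explicit inverse displayed in Example~\ref{ex2.4}. Second, a slip in constants at the end: the uniform upper bound on $\tilde{A}_\omega^{-1}$ is $(A_\omega^-)^{-1}$ (the reciprocal of the coercivity constant of $\tilde{A}_\omega$), so with $0\le\ell\le 1$ the conclusion should read $(A_\omega^{-1}\omega,\omega)\le (A_\omega^-)^{-1}\|\omega\|^2$, not $A_\omega^+\|\omega\|^2$. This is a mislabeling rather than a structural error, but worth correcting since the whole point of the corollary is to track which constant bounds which operator.
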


We note that while Assumptions \ref{a2.1} and \ref{a2.2a} are sufficient for invertibility, they are not strictly necessary and may be relaxed to appropriate inf-sup conditions on $A_\omega$ or ${\tilde{A}}_\omega$. We make the slightly stronger assumptions since these are satisfied by the examples considered herein.

\begin{example}[Isotropic Cosserat material]\label{ex2.4}
    An isotropic material satisfies the condition that the material tensors $A_\sigma$ and $A_\omega$ are invariant under coordinate rotations. As is well-known (see e.g.~\cite{b14}), isotropic fourth-order tensors have only two free parameters when acting on symmetric matrices, known as the Lamé parameters in the context of elasticity. 
    To handle non-symmetric matrices common to Cosserat elasticity, we additionally require a Cosserat couple modulus \cite{b12}. 
    Denoting the elastic Lamé parameters as $\mu$ and $\lambda_\sigma$, and the Cosserat couple modulus by $\mu_\sigma^c$, we consider the elastic material tensor on $\sigma\in L^2\mathbb{M}$ as
    \begin{align}
        A_\sigma\sigma \coloneqq
        \frac{1}{2\mu}\left(\sigma-\frac{\lambda_\sigma}{2\mu+3\lambda_\sigma}\left(\sigma :I\right)I\right)
        + \left(\frac{1}{2\mu_\sigma^c} - \frac{1}{2\mu}\right) \frac12 S^*S \sigma\rev{.}
    \end{align}
    Similarly for the couple stress material tensor, we introduce the micropolar moduli $\lambda_\omega$ and $\mu_\omega^c$ and consider the following material tensor for $\omega\in L^2\mathbb{M}$
    \begin{align}
        A_\omega\omega \coloneqq 
        \frac{1}{\ell^2} \left[
        \frac{1}{2\mu}\left(\omega-\frac{\lambda_\omega}{2\mu+3\lambda_\omega}\left(\omega : I\right)I\right) 
        + \left(\frac{1}{2\mu_\omega^c} - \frac{1}{2\mu}\right) \frac12 S^*S \omega
        \right],
    \end{align}
    in which $\ell$ is the characteristic length of the micropolar structure \cite{b12}.

    We remark that Assumption \ref{a2.1} holds if $\min\{\mu, \mu_\sigma^c\} \ge c >0$ and $2\mu+3\lambda_\sigma>0$, and similarly Assumption \ref{a2.2a} holds if $\min\{\ell, \mu, \mu_\omega^c\} \ge c > 0$ and $2\mu+3\lambda_\omega>0$. 
    Assumption \ref{a2.2b}, on the other hand, is satisfied in the more general case of $\ell \ge 0$ with $\min\{\mu, \mu_\omega^c\} \ge c >0$ and $2\mu+3\lambda_\omega>0$. 
    Importantly, we allow for $\lambda_\sigma=0$ or $\lambda_\omega=0$. 

    Furthering the example, subject to Assumption \ref{a2.1} the elastic material law can be inverted to obtain
    \begin{align}
        A_\sigma^{-1}\tau = 
        2 \mu  \left(I - \frac12 S^*S \right) \tau 
        + 2\mu_\sigma^c \left(\frac12 S^*S\right) \tau + \lambda_\sigma\left(\tau : I\right)I.
    \end{align}
    In the special case $\mu_\sigma^c = \mu$, this law simplifies to the familiar expression for isotropic and symmetric material tensors, $A_\sigma^{-1}\tau= 2 \mu \tau + \lambda_\sigma\left(\tau : I\right)I$.
\end{example}

\begin{remark}\label{r2.5}
    To model nearly incompressible materials, it is important to consider robustness with respect to the limit $\lambda_\sigma\rightarrow\infty$. This case is handled naturally in this work since we consider the mixed formulation of the elasticity equations. In particular, for an isotropic material as characterized in Example \ref{ex2.4}, the continuity and coercivity constants appearing in the proofs can be shown to be independent of $\lambda_\sigma$ due to \cite[Prop. 9.1.1]{b11}. We will therefore not consider the incompressible limit explicitly in the analyses of Sections \ref{sec3} and \ref{sec4}, but verify this property numerically in Section \ref{sec5.2}.
\end{remark}

\subsection{Connection to linear elasticity} \label{sec2.3}
An important aspect of the Cosserat equations, as well as the later analysis, is that they can be considered as a generalization of the equations of linearized elasticity. To see this, we first recall the Hellinger-Reissner form of the equations for elasticity. Let therefore as before $\left(\sigma,u,r\right)\in\mathbb{M}\times\mathbb{V}\times\mathbb{V}$ represent stress, displacement and rotation, then a static equilibrium of a linearly elastic system satisfies  \cite{b9,b11}: 
\begin{subequations}\label{eq2.6}
\begin{align}
\textrm{Constitutive law for linearized elastic stress:}&
&
A_\sigma \sigma &=\nabla u+S^\ast r,	\\
\textrm{Balance of linear momentum:} &
&
-\nabla\cdot\sigma &=f_u, \\
\textrm{Symmetry of the stress tensor:} &
&
S\sigma &=0,	
\end{align}

    subject to the boundary condition:
    \begin{align}
        u &= 0, &
        \textrm{on }& \partial \Omega.
    \end{align}

\end{subequations}

By inspection of equations \eqref{eq2.5} and \eqref{eq2.6}, we note that, formally, the equations of linearized elasticity can be obtained as the limit of the linearized Cosserat equations when the tensor $A_\omega \to \infty$ in a suitable sense (e.g. if $\ell \downarrow 0$ in Example \ref{ex2.4}), and thus $\omega\rightarrow 0$. This motivates the distinction highlighted by Assumptions \ref{a2.2a} and \ref{a2.2b}. 

This paper will not provide rigorous analysis of the relationship between equations \eqref{eq2.5} and \eqref{eq2.6}, but rather use the formal observation to motivate the need for numerical methods that are stable and convergent independent of an upper bound on $A_\omega$. This topic will be discussed in detail in Section \ref{sec4}. 

\begin{remark}
    The linear elasticity equations can alternatively be obtained by letting the Cosserat coupling modulus ($\mu_\sigma^c$ in Example \ref{ex2.4}) tend to zero. In that case, the material tensor $A_\sigma$ effectively enforces $S \sigma = 0$ strongly, leading to a symmetric elastic stress and a decoupling of the system. However, this limit violates Assumption \ref{a2.1} and we will therefore not consider it in this work. Instead, we focus on creating a method that is robust with respect to vanishing micropolar length scale $\ell \downarrow 0$.
\end{remark}

\subsection{Problem definition}
\label{sec2.4}
For future reference, we state the linearized Cosserat equations and highlight their structure. Consider first equations \eqref{eq2.5} written as a $4\times4$ block system: Find $\left(\sigma,\omega,u,r\right)\in\mathbb{M}\times\mathbb{M}\times\mathbb{V}\times\mathbb{V}$ such that 
\begin{align}\label{eq2.7}
    \begin{pmatrix}A_\sigma&&-\nabla&-S^\ast\\&A_\omega&&-\nabla\\-\nabla\cdot&&&\\S&-\nabla\cdot&&\\\end{pmatrix}\begin{pmatrix}
        \sigma \\ \omega \\ u \\ r
    \end{pmatrix}=
    \begin{pmatrix}
        0 \\ 0 \\ f_u \\ f_r
    \end{pmatrix},
\end{align}
subject to the boundary condition \eqref{eq: bc}. We highlight the structure of this system by introducing the block operators 
\begin{align}\label{eq2.8}
\mathcal{A} &\coloneqq \begin{pmatrix}A_\sigma&\\&A_\omega\\\end{pmatrix}, 
& \mathcal{S} &\coloneqq \begin{pmatrix}-\nabla\cdot&\\S&-\nabla\cdot\\\end{pmatrix}.
\end{align}
This allows us to rewrite the problem to: For $\mathcal{f}=\left(f_u,f_r\right)$, find $\left(\sigma,\omega\right)=\mathcal{p}\in\mathcal{M}$ and $\left(u,r\right)=\mathcal{u}\in\mathcal{V}$ such that
\begin{align}\label{eq2.9}
    \begin{pmatrix}\mathcal{A}&-\mathcal{S}^\ast\\\mathcal{S}&\\\end{pmatrix}
    \begin{pmatrix}\mathcal{p}\\ \mathcal{u}\end{pmatrix}=\begin{pmatrix}0 \\ \mathcal{f} \end{pmatrix}.
\end{align}

We emphasize the adjoint operator $\mathcal{S}^*$ is only defined for functions that satisfy zero boundary conditions in the sense of integration by parts, cf. \eqref{eq2.3} \cite{b10}, thus the weak formulation stated in equation \eqref{eq2.9} includes the boundary conditions naturally.   

Whenever both Assumptions \ref{a2.1} and \ref{a2.2a} hold, Corollary \ref{cor2.3} applies and the Schur-complement formulation of \eqref{eq2.9} is given by 
\begin{align}\label{eq2.10}
    \mathcal{S}\left(\mathcal{A}^{-1}\mathcal{S}^\ast\right)\mathcal{u}=\mathcal{f}.
\end{align}
After solving for $\mathcal{u}$, the variable $\mathcal{p}$ can be postprocessed as $\mathcal{p}=\mathcal{A}^{-1}\mathcal{S}^\ast\mathcal{u}$.

Our objective in the next sections is to provide well-posedness analysis and mixed finite element (MFE) methods for the system given in \eqref{eq2.9}, both in the context of Assumption \ref{a2.2a} (Section \ref{sec3}), as well as in the context of Assumption \ref{a2.2b} (Section \ref{sec4}).

\section{Strongly coupled mixed finite elements}
\label{sec3}
The Cosserat equations can be seen as a Hodge-Laplace operator on a differential complex \cite[Sec. 5.2]{BGGSequences}. This has the implication that general results from both analysis and MFE discretization of Hilbert complexes can be invoked. In particular, we obtain from general arguments both well-posedness and stability of both the continuous and discrete Cosserat equations, as well as quasi-optimal approximation properties of the resulting MFE solution.  

Our main concern are distributions with $L^2$ regularity, which leads to a Hilbert-Cosserat complex (for the extension of continuous complexes to the Hilbert setting, see \cite{b20,b17}). Thus, we consider the weak generalization of the calculus operators $\nabla$ and $\nabla\cdot$ to densely defined operators between distributions of $L^2$ regularity. In particular, we are interested in the mapping
\begin{align}\label{eq3.14}
\begin{tikzcd}[ampersand replacement=\&, column sep = 4.1em]
    H(\nabla\cdot;\mathcal{M}) \arrow[r, "{\begin{psmallmatrix}-\nabla\cdot & 0 \\ S & -\nabla\cdot \end{psmallmatrix}}"]
    \& L^2\mathcal{V}, 
\end{tikzcd}
\end{align}
where the domain of the block operator $\mathcal{S}$ is the space $H(\nabla\cdot;\mathcal{M})\coloneqq \{\mathcal{p}=\left(\sigma,\ \omega\right)\in L^2\mathcal{M} : \left(\nabla\cdot\sigma,\nabla\cdot\omega\right)\in L^2\mathcal{V}\}$.

We moreover introduce a weighted inner product on $L^2\mathcal{M}$ using the material tensors. For $\mathcal{r}=\left(\sigma,\omega\right), \mathcal{p}=\left(\tau,\rho\right)\in L^2\mathcal{M}$, we define
\begin{align}\label{eq3.4}
\left(\mathcal{r},\mathcal{p}\right)_\mathcal{A} \coloneqq \left(A_\sigma\sigma,\tau\right)+\left(A_\omega\omega,\rho\right),
\end{align}
with the associated norm $\|\mathcal{p}\|_\mathcal{A} \coloneqq (\mathcal{p},\mathcal{p})_\mathcal{A}^{1/2}$.

Then the following well-posedness of the weak (Hodge-)Laplacian is standard \cite[Thm 4.8]{b17}.

\begin{corollary}[Well-posedness with Dirichlet conditions]\label{cor3.5}
Let the mixed variational form of the Cosserat equations with Dirichlet conditions \eqref{eq2.10}, be stated as follows: For $\mathcal{f}\in L^2\mathcal{V}$, find $\left(\mathcal{p},\mathcal{u}\right)\in H(\nabla\cdot;\mathcal{M})\times L^2\mathcal{V}$ such that: 
\begin{subequations}\label{eq3.15}
    \begin{align}
        \left(\mathcal{p},\mathcal{p}^\prime\right)_\mathcal{A}-\left(\mathcal{u},\mathcal{Sp}^\prime\right) &= 0	
        & 	\forall\mathcal{p}^\prime &\in H(\nabla\cdot;\mathcal{M}),	\\	
        \left(\mathcal{Sp},\mathcal{u}^\prime\right) &= (\mathcal{f},\mathcal{u}^\prime)		
        & 	\forall\mathcal{u}^\prime &\in L^2\mathcal{V}.
    \end{align}
\end{subequations}
Then \eqref{eq3.15} admits a unique solution that satisfies
\begin{align}
    \|\mathcal{p}\|_\mathcal{A}+\|\mathcal{Sp}\| + \|\mathcal{u}\|\le C\|\mathcal{f}\|,
\end{align}
where the constant C depends on the Poincaré constant of the Cosserat complex, and thus scales with  $\sqrt{\max{\left(A_\sigma^+,A_\omega^+\right)}}$.
\end{corollary}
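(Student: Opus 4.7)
The plan is to recognize \eqref{eq3.15} as the abstract mixed Hodge--Laplace problem on a closed Hilbert complex containing the sub-complex \eqref{eq3.14}, and to invoke the general well-posedness result of \cite[Thm.~4.8]{b17}. The work then reduces to two ingredients: (i) checking that the weighted inner product $(\cdot,\cdot)_\mathcal{A}$ is uniformly equivalent to the ambient $L^2$ inner product on $\mathcal{M}$, and (ii) verifying that $\mathcal{S} : H(\nabla\cdot;\mathcal{M}) \to L^2\mathcal{V}$ has closed range equal to $L^2\mathcal{V}$, with a bounded right-inverse.

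Ingredient (i) is immediate from Assumptions \ref{a2.1} and \ref{a2.2a}: setting $A^\pm = \max/\min\{A_\sigma^\pm, A_\omega^\pm\}$, one obtains $A^-\|\mathcal{p}\|^2 \le \|\mathcal{p}\|_\mathcal{A}^2 \le A^+\|\mathcal{p}\|^2$ on $L^2\mathcal{M}$. This directly provides continuity of the bilinear form in \eqref{eq3.15} on $L^2\mathcal{M}$, as well as coercivity on $\ker\mathcal{S}\subset H(\nabla\cdot;\mathcal{M})$ in the graph norm, since on the kernel the graph norm collapses to the $L^2$ norm.

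For ingredient (ii), I would appeal to the Cosserat complex constructed in \cite[Sec.~5.2]{BGGSequences}. Because $\Omega$ is contractible, the cohomology of the complex vanishes at this position, and combining this with the Hilbert-complex closed-range framework of \cite{b17,b20} yields surjectivity of $\mathcal{S}$ onto $L^2\mathcal{V}$ together with a bounded right-inverse whose operator norm is controlled by the Poincaré constant $C_P$ of the complex. Existence of such a bounded right-inverse is equivalent to the inf-sup condition for the off-diagonal form $(\mathcal{u},\mathcal{S}\mathcal{p}')$ on $H(\nabla\cdot;\mathcal{M})\times L^2\mathcal{V}$. This is the only non-routine step in the argument, and the chief obstacle of the proof; once the Cosserat complex is known to be closed with trivial cohomology, everything else is standard mixed-method machinery.

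With (i) and (ii) in hand, Brezzi's theorem delivers a unique $(\mathcal{p},\mathcal{u})$ together with the stability bound. Tracking constants: the bound on $\|\mathcal{u}\|$ comes from the bounded right-inverse of $\mathcal{S}$ combined with the norm equivalence on $\mathcal{p}$, which converts an $L^2$-bound into an $\mathcal{A}$-bound and absorbs a factor $\sqrt{A^+}$; the bounds on $\|\mathcal{p}\|_\mathcal{A}$ and $\|\mathcal{S}\mathcal{p}\|$ then follow by testing the first equation of \eqref{eq3.15} with $\mathcal{p}$ and the second with $\mathcal{u}$. Collecting the factors yields the stated scaling of $C$ with $\sqrt{\max(A_\sigma^+,A_\omega^+)}$, modulated by the Poincaré constant of the Cosserat complex.
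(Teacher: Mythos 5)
Your proposal is correct and follows essentially the same route as the paper, which simply invokes the standard well-posedness theory for the mixed Hodge--Laplace problem on a closed Hilbert complex (citing \cite[Thm 4.8]{b17}); your two ingredients --- norm equivalence of $(\cdot,\cdot)_\mathcal{A}$ with the $L^2$ inner product, and the Poincar\'e inequality/bounded right-inverse of $\mathcal{S}$ from the trivial cohomology of the Cosserat complex on a contractible domain --- are precisely the hypotheses that theorem packages. Your tracking of the constant through $\sqrt{\max(A_\sigma^+,A_\omega^+)}$ also matches the paper's statement.
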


Our objective is to obtain a discrete approximation to the weak form of the first-order statement of the Cosserat equations. That is to say, for a family of finite-dimensional spaces $X_h^\mathcal{p}\subset H(\nabla\cdot;\mathcal{M})$ and $X_h^\mathcal{u}\subset L^2\mathcal{V}$, both indexed by $h$, we approximate equations \eqref{eq3.15} by the following finite-dimensional system:  For $\mathcal{f}\in L^2\mathcal{V}$, find $\left(\mathcal{p}_h,\mathcal{u}_h\right)\in X_h^\mathcal{p}\times X_h^\mathcal{u}$ such that: 
\begin{subequations}\label{eq3.17}
\begin{align}
\left(\mathcal{p}_h,\mathcal{p}_h^\prime\right)_\mathcal{A}-\left(\mathcal{u}_h,\mathcal{S}\mathcal{p}_h^\prime\right) &= 0 &
\forall \mathcal{p}_h^\prime &\in X_h^\mathcal{p}, \\
\left(\mathcal{S}\mathcal{p}_h,\mathcal{u}_h^\prime\right) &= (\mathcal{f},\mathcal{u}_h^\prime) &
\forall \mathcal{u}_h^\prime &\in X_h^\mathcal{u}.
\end{align}
\end{subequations}
We will in this section consider finite element pairs that satisfy the following definition:

\begin{definition}[Strongly coupled spaces]\label{def3.6} 
The finite element spaces $X_h^\mathcal{p}\times X_h^\mathcal{u}\subset H(\nabla\cdot;\mathcal{M})\times L^2\mathcal{V}$ form a strongly coupled pair if $\mathcal{S}X_h^\mathcal{p}\subseteq X_h^\mathcal{u}$ and there exist linear mappings $\pi^\star$ with $\star\in\left\{\mathcal{p},\mathcal{u}\right\}$ such that the following diagram  of operators arises: 
\begin{align}\label{eq3.18}
\begin{tikzcd}[ampersand replacement=\&]
H(\nabla\cdot;\mathcal{M}) 
\arrow[r, "\mathcal{S}"]
\arrow[d, "\pi^\mathcal{p}"]
\& L^2\mathcal{V}
\arrow[d, "\pi^\mathcal{u}"]
\\ 
X_h^\mathcal{p}
\arrow[r, "\mathcal{S}"]
\& X_h^\mathcal{u}
\end{tikzcd}
\end{align}
\end{definition}

\begin{remark}\label{r3.6b}
    For a strongly coupled pair of spaces, the condition $\mathcal{S}X_h^\mathcal{p}\subseteq X_h^\mathcal{u}$ implies that linear \eqref{eq2.5c} and angular \eqref{eq2.5d} momentum are balanced pointwise if $\mathcal{f}\in X_h^\mathcal{u}$. The nomenclature adopted here therefore relates to the strong or weak satisfaction of these balance equations.
\end{remark}

If the diagram \eqref{eq3.18} commutes, then the discrete spaces form a \emph{discrete subcomplex} of the relevant section of the Cosserat complex. The key for the next results is encoded in the continuity of the commuting linear maps $\pi^\star$ \cite{b11,b16}. Following those previous works, we recall the following definition. 

\begin{definition}[Bounded cochain projection]\label{def3.7} 
If the linear maps $\pi^\mathcal{p}$ and $\pi^\mathcal{u}$ satisfy the following properties: 
\begin{enumerate}
    \item 	Both operators are \emph{projections}, thus $\pi^\mathcal{p}\pi^\mathcal{p}=\pi^\mathcal{p}$ and $\pi^\mathcal{u}\pi^\mathcal{u}=\pi^\mathcal{u}$. 
    \item Both operators are \emph{uniformly bounded}, thus there exists a constant $C$, independent of $h$, such that 
    \begin{align*}
    \|\pi^\mathcal{p}\mathcal{p}\|_\mathcal{A}+\|\mathcal{S}\pi^\mathcal{p}\mathcal{p}\|  &\le C (\|\mathcal{p}\|_{\mathcal{A}}+\|\mathcal{Sp}\|)& \forall \mathcal{p} &\in H(\nabla\cdot;\mathcal{M}), \\
    \|\pi^\mathcal{u}\mathcal{u}\| &\le C\|\mathcal{u}\|& \forall \mathcal{u} &\in L^2\mathcal{V}.
\end{align*}
    \item The operators \emph{commute}: 
\begin{align*}
    \mathcal{S}\pi^\mathcal{u}=\pi^\mathcal{p}\mathcal{S}.
\end{align*}
\end{enumerate}
Then we refer to the two linear maps $\pi^\star$ collectively as a bounded cochain projection. 
\end{definition}

Existence of the above bounded cochain projections, which we will verify below, is sufficient to obtain unique solvability and optimal approximation properties. 

\begin{theorem}[Optimal approximation]\label{th3.8} 
Let the spaces $X_h^\mathcal{p}$ and $X_h^\mathcal{u}$ be strongly coupled, and let there exist bounded cochain projections $\pi^\star$. Then problem \eqref{eq3.17} is uniquely solvable and, moreover, the solution of \eqref{eq3.17} satisfies standard approximation properties relative to the solution of \eqref{eq3.15}: 
\begin{align}\label{3.19}
    \|\mathcal{p}-\mathcal{p}_h\|_\mathcal{A}+&\|\mathcal{Sp}-\mathcal{S}\mathcal{p}_h\|+\|\mathcal{u}-\mathcal{u}_h\| \nonumber\\ 
    &\le C\left(\inf_{\mathcal{p}^\prime\in H(\nabla\cdot;\mathcal{M})}(\|\mathcal{p}-\mathcal{p}^\prime\|_\mathcal{A}+\|\mathcal{S}(\mathcal{p}-\mathcal{p}^\prime)\|)+\inf_{\mathcal{u}^\prime\in L^2\mathcal{V}} (\|\mathcal{u}-\mathcal{u}^\prime\|) \right),
\end{align}
where the constant $C$ depends on the Poincar\'{e} constant of the section of the Cosserat complex given in Definition \ref{def3.6}, as well as the uniform bound $C$ on the bounded cochain projections stated in Definition \ref{def3.7}. 
\end{theorem}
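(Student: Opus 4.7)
The result is the standard mixed-method theorem for discretizing the two-term sub-complex of \eqref{eq3.14}, in the spirit of the Arnold--Falk--Winther theory of finite element exterior calculus. My plan is to verify the discrete Brezzi conditions uniformly in $h$ using the bounded cochain projection as a Fortin operator, and then to convert the resulting discrete quasi-best-approximation bound into the continuous form asserted by \eqref{3.19}. Throughout I interpret the infima in \eqref{3.19} as being over $X_h^{\mathcal{p}}$ and $X_h^{\mathcal{u}}$ (the literal statement over $H(\nabla\cdot;\mathcal{M})$ and $L^2\mathcal{V}$ being trivially zero).

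First I would establish discrete well-posedness. Coercivity of $(\cdot,\cdot)_{\mathcal{A}}$ on the discrete kernel of $\mathcal{S}$ is inherited from its coercivity on all of $L^2\mathcal{M}$ under Assumptions \ref{a2.1}--\ref{a2.2a}. The inf-sup condition is the key Fortin argument: for $\mathcal{u}_h\in X_h^{\mathcal{u}}$, Corollary \ref{cor3.5} supplies $\mathcal{p}\in H(\nabla\cdot;\mathcal{M})$ with $\mathcal{S}\mathcal{p}=\mathcal{u}_h$ and $\|\mathcal{p}\|_{\mathcal{A}}+\|\mathcal{S}\mathcal{p}\|\le C\|\mathcal{u}_h\|$. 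Setting $\mathcal{p}_h:=\pi^{\mathcal{p}}\mathcal{p}$, the commutativity $\mathcal{S}\pi^{\mathcal{p}}=\pi^{\mathcal{u}}\mathcal{S}$ together with $\pi^{\mathcal{u}}|_{X_h^{\mathcal{u}}}=\operatorname{Id}$ (since $\pi^{\mathcal{u}}$ is a projection onto $X_h^{\mathcal{u}}$) yields $\mathcal{S}\mathcal{p}_h=\pi^{\mathcal{u}}\mathcal{u}_h=\mathcal{u}_h$, while the uniform boundedness of $\pi^{\mathcal{p}}$ controls $\mathcal{p}_h$ in the graph norm by $\|\mathcal{u}_h\|$. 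Brezzi's theorem then delivers existence, uniqueness, and stability of \eqref{eq3.17}, together with an a priori estimate of the form \eqref{3.19} but with discrete infima.

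To pass from the discrete infima to the continuous ones, I would substitute the canonical choices $\pi^{\mathcal{p}}\mathcal{p}\in X_h^{\mathcal{p}}$ and $\pi^{\mathcal{u}}\mathcal{u}\in X_h^{\mathcal{u}}$. For any $\mathcal{p}''\in X_h^{\mathcal{p}}$, the projection identity gives $\pi^{\mathcal{p}}\mathcal{p}-\mathcal{p}''=\pi^{\mathcal{p}}(\mathcal{p}-\mathcal{p}'')$, so by the triangle inequality and uniform boundedness of $\pi^{\mathcal{p}}$,
\begin{align*}
\|\mathcal{p}-\pi^{\mathcal{p}}\mathcal{p}\|_{\mathcal{A}}+\|\mathcal{S}(\mathcal{p}-\pi^{\mathcal{p}}\mathcal{p})\|\le (1+C)\bigl(\|\mathcal{p}-\mathcal{p}''\|_{\mathcal{A}}+\|\mathcal{S}(\mathcal{p}-\mathcal{p}'')\|\bigr),
\end{align*}
and likewise $\|\mathcal{u}-\pi^{\mathcal{u}}\mathcal{u}\|\le(1+C)\|\mathcal{u}-\mathcal{u}''\|$ for any $\mathcal{u}''\in X_h^{\mathcal{u}}$. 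Taking the infimum over $\mathcal{p}''$ and $\mathcal{u}''$ produces \eqref{3.19}.

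The principal obstacle is the Fortin step, since it is the only place where all three defining properties of a bounded cochain projection (projection, boundedness, commutativity) are invoked simultaneously; the kernel coercivity and the passage from discrete to continuous infima are routine once the Fortin step is in hand. The resulting constant depends on the Poincar\'e constant of the sub-complex through Corollary \ref{cor3.5} and on the boundedness constants of $\pi^{\mathcal{p}},\pi^{\mathcal{u}}$, as claimed.
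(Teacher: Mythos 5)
Your proposal is correct and matches the paper, whose entire proof is a citation of the standard Hilbert-complex approximation theorems of Arnold--Falk--Winther; your Fortin/Brezzi reconstruction (and your reading of the infima in \eqref{3.19} as being over $X_h^{\mathcal{p}}$ and $X_h^{\mathcal{u}}$, since the literal infima vanish) is precisely the content of that citation. The one step worth tightening is kernel coercivity: $(\cdot,\cdot)_{\mathcal{A}}$ is coercive on the discrete kernel \emph{in the graph norm} $\|\cdot\|_{\mathcal{A}}+\|\mathcal{S}\cdot\|$ not because it is coercive on all of $L^2\mathcal{M}$, but because the strong-coupling inclusion $\mathcal{S}X_h^{\mathcal{p}}\subseteq X_h^{\mathcal{u}}$ lets one test with $\mathcal{u}_h'=\mathcal{S}\mathcal{p}_h$, forcing the discrete kernel into the true kernel of $\mathcal{S}$ where the graph norm collapses to $\|\cdot\|_{\mathcal{A}}$.
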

\begin{proof}
    See \cite[Thm.~5.4 and 5.5]{b17}.
\end{proof}

\begin{theorem}[Strongly coupled MFE for Cosserat]\label{th3.9} 
For a family of simplicial partitions $\Omega_h$ of the domain $\Omega$, and for a non-negative integer $k$, define  
\begin{align}
    X_{h,k}^\mathcal{p} &\coloneqq W_k\left(\Omega_h\right)^3\times W_{k+1}\left(\Omega_h\right)^3
    &\textrm{with }W_k &\in \left\{BDM_{k+1},\ RT_k\right\},\\
    X_{h,k}^\mathcal{u} &\coloneqq P_{-k}\left(\Omega_h\right)^3\times P_{-\left(k+1\right)}\left(\Omega_h\right)^3.
\end{align}
Then the following statements hold:
\begin{enumerate}[label=\arabic*)]
    \item The spaces $X_{h,k}^\mathcal{p}$ and $X_{h,k}^\mathcal{u}$ are strongly coupled (Def. \ref{def3.6}). 
    \item There exist bounded cochain projections $\pi^\mathcal{p}$ and $\pi^\mathcal{u}$ (Def. \ref{def3.7}).
    \item If the solution of equation \eqref{eq3.15} has sufficient regularity, then the following \emph{a priori} convergence rates hold for $\mathcal{u}=\left(u,r\right)$ and $\mathcal{p}=\left(\sigma,\omega\right)$ 
    \begin{align}\label{eq3.20}
        \|\sigma-\sigma_h\|_{H(\nabla\cdot;\mathbb{M})}+\|u-u_h\|+\|\omega-\omega_h\|_{H(\nabla\cdot;\mathbb{M})}+\|r-r_h\|=\mathcal{O}\left(h^{k+1}\right).
    \end{align}
\end{enumerate}
\end{theorem}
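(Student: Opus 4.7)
The plan is to verify the three claims in sequence, treating the complex $H(\nabla\cdot;\mathcal{M}) \xrightarrow{\mathcal{S}} L^2\mathcal{V}$ as a BGG-type assembly of two copies of the last arrow of the de Rham complex, coupled by the algebraic operator $S$.

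For Claim 1, the argument is a direct check of polynomial degrees. For $W \in \{BDM_{k+1}, RT_k\}$ we have $\nabla\cdot W \subseteq P_{-k}$, so $\nabla\cdot W_k^3 \subseteq P_{-k}^3$ and $\nabla\cdot W_{k+1}^3 \subseteq P_{-(k+1)}^3$. Moreover, every entry of a field in $W_k^3$ is a piecewise polynomial of degree at most $k+1$ (the worst case being the $\hat{P}_{-k}\bm{x}$ part of $RT_k$), so $S W_k^3 \subseteq P_{-(k+1)}^3$ since $S$ merely subtracts two entries. Combining the three inclusions yields $\mathcal{S}X_{h,k}^{\mathcal{p}} \subseteq X_{h,k}^{\mathcal{u}}$, which is what Definition~\ref{def3.6} requires.

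Claim 2 is the crux. I would start from the standard uniformly $H(\nabla\cdot)$-bounded commuting projections of the de Rham complex (smoothed Arnold-Falk-Winther / Christiansen-Winther projectors), acting row-wise on matrix fields to give projections $\pi^\sigma$ onto $W_k^3$ and $\pi^\omega$ onto $W_{k+1}^3$, together with $L^2$-orthogonal projections $\pi^u, \pi^r$ onto $P_{-k}^3$ and $P_{-(k+1)}^3$. The diagonal commuting relations $\nabla\cdot\pi^\sigma = \pi^u\nabla\cdot$ and $\nabla\cdot\pi^\omega = \pi^r\nabla\cdot$ hold by construction. The difficulty is the off-diagonal block: generically $S\pi^\sigma\sigma \neq \pi^r S\sigma$. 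To cancel this defect, I would set $D\sigma := \pi^r S\sigma - S\pi^\sigma\sigma \in P_{-(k+1)}^3$ and, using surjectivity of $\nabla\cdot: W_{k+1}^3 \twoheadrightarrow P_{-(k+1)}^3$ together with a uniformly bounded right inverse available from FEEC theory, pick a linear $\phi:H(\nabla\cdot;\mathbb{M}) \to W_{k+1}^3$ with $-\nabla\cdot\phi(\sigma) = D\sigma$ and $\|\phi(\sigma)\|_{H(\nabla\cdot;\mathbb{M})} \le C\|D\sigma\|$. Then define
\begin{align*}
\pi^{\mathcal{p}}(\sigma,\omega) &:= (\pi^\sigma\sigma,\ \pi^\omega\omega + \phi(\sigma)), & \pi^{\mathcal{u}}(u,r) &:= (\pi^u u,\ \pi^r r).
\end{align*}
Because $D\sigma = 0$ on $W_k^3$ (by Claim 1 and the projection property of $\pi^\sigma$), $\pi^{\mathcal{p}}$ restricts to the identity on $X_{h,k}^{\mathcal{p}}$; the commutativity $\mathcal{S}\pi^{\mathcal{p}} = \pi^{\mathcal{u}}\mathcal{S}$ is a short substitution once $\nabla\cdot\phi(\sigma) = -D\sigma$ is plugged in; and uniform boundedness follows from the $h$-independent bounds on the AFW-type projectors combined with the bound on $\phi$. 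I expect the main technical obstacle to be exhibiting the uniformly bounded right inverse $\phi$, which is exactly the type of BGG construction carried out in \cite{BGGSequences}.

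Claim 3 should then follow by feeding the projections just constructed into Theorem~\ref{th3.8} and invoking standard best-approximation estimates on simplicial meshes: under sufficient regularity, $\inf_{\sigma'\in W_k^3}\|\sigma-\sigma'\|_{H(\nabla\cdot;\mathbb{M})} = O(h^{k+1})$, $\inf_{\omega'\in W_{k+1}^3}\|\omega-\omega'\|_{H(\nabla\cdot;\mathbb{M})} = O(h^{k+2})$, $\inf\|u-u'\| = O(h^{k+1})$, and $\inf\|r-r'\| = O(h^{k+2})$. Since $\|\cdot\|_{\mathcal{A}}$ is equivalent to $\|\cdot\|$ on $L^2\mathcal{M}$ by Assumptions~\ref{a2.1} and \ref{a2.2a}, summing these contributions gives the asserted $O(h^{k+1})$ rate, controlled by the $\sigma$ and $u$ terms.
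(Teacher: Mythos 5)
Your proposal is correct in parts 1) and 3) and essentially matches the paper there (with one small gloss: Theorem~\ref{th3.8} controls $\|S e_\sigma-\nabla\cdot e_\omega\|$ rather than $\|\nabla\cdot e_\omega\|$ directly, so you need one extra triangle inequality using the boundedness of $S$ to land on \eqref{eq3.20}; the paper spells this out). For part 2), however, you take a genuinely different route. The paper conjugates the block-diagonal de Rham projections by the explicit algebraic automorphism $\Phi=\begin{psmallmatrix}I&0\\U&I\end{psmallmatrix}$ with $Uv=\bm{x}\times v$, using that $\mathcal{S}=\Phi\,\mathrm{diag}(-\nabla\cdot,-\nabla\cdot)\,\Phi^{-1}$, that $\Phi^{\pm1}$ are $L^2$-bounded with constants depending only on the domain, and crucially that $\Phi^{-1}$ preserves the discrete spaces because multiplication by a coordinate maps $BDM_{k+1}\to BDM_{k+2}$ and $RT_k\to RT_{k+1}$ --- which is exactly why the two components of $X_{h,k}^{\mathcal{p}}$ are staggered by one order. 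Commutativity, the projection property, and uniform boundedness then all come for free from conjugation. You instead keep the uncoupled projections and repair the off-diagonal commutation defect $D\sigma=\pi^r S\sigma-S\pi^\sigma\sigma$ by a divergence-correction $\phi(\sigma)$ supplied by a uniformly bounded discrete right inverse of $\nabla\cdot:W_{k+1}^3\to P_{-(k+1)}^3$; your verification that $D$ vanishes on the discrete space (so the corrected map is still a projection) and that $\|D\sigma\|\lesssim\|\sigma\|_{H(\nabla\cdot;\mathbb{M})}$ is sound, and the right inverse does exist uniformly by the standard discrete inf-sup condition for the mixed Poisson pairs. Your argument is more generic --- it would work for any algebraic coupling $S$ mapping the discrete stress space into the discrete rotation space, without an explicit isomorphism to the de Rham complex --- but it carries the extra burden of the bounded right inverse, and you should be careful that the smoothed projections commute with their own $L^2$-side partners rather than with the $L^2$-orthogonal projections you name; the paper's construction avoids both issues and, as a bonus, explains structurally why the staggered polynomial orders are the right choice.
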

\begin{proof}
    1) Clearly, $X_{h,k}^\mathcal{p}\subset H(\nabla\cdot;\mathcal{M})$ and $X_{h,k}^\mathcal{u}\subset L^2\mathcal{V}$, and any linear projection onto these spaces is a linear map. It remains to show that the range of $\mathcal{S}$ applied to $X_{h,k}^\mathcal{p}$ is contained in $X_{h,k}^\mathcal{u}$. Thus for any $\left(\sigma,\omega\right)\in X_{h,k}^\mathcal{p}$, we need to show that
    \begin{align}\label{eq3.21}
        \begin{pmatrix}-\nabla\cdot&0\\ S &-\nabla\cdot \end{pmatrix}\begin{pmatrix}
            \sigma \\ \omega
        \end{pmatrix}=\begin{pmatrix}
            -\nabla\cdot\sigma \\ S\sigma-\nabla\cdot\omega
        \end{pmatrix} \in \begin{pmatrix}
            P_{-k}\left(\Omega_h\right)^3 \\ P_{-\left(k+1\right)}\left(\Omega_h\right)^3
        \end{pmatrix}.
    \end{align}
By the definition of $X_{h,k}^\mathcal{p}$, we have that $\sigma\in W_k\left(\Omega_h\right)^3$ and $\omega\in W_{k+1}\left(\Omega_h\right)^3$, thus $\nabla\cdot\sigma\in P_{-k}\left(\Omega_h\right)^3$ and $\nabla\cdot\omega\in P_{-\left(k+1\right)}\left(\Omega_h\right)^3$ as differentiation reduces polynomial degree. It remains to show that $S\sigma\in P_{-\left(k+1\right)}\left(\Omega_h\right)^3$. Note that $W_k\left(\Omega_h\right)\subseteq P_{-\left(k+1\right)}\left(\Omega_h\right)^{3\times3}$, and since S is a linear transformation with constant weights, it does not increase polynomial degree. In turn, we have $S\sigma\in P_{-\left(k+1\right)}\left(\Omega_h\right)^3$, and thus the spaces are strongly coupled.

2) We first note that the existence of bounded cochain projections for the de Rham complex are well established, such as e.g. the Fortin operator and the $L^2$ projection (see e.g.~\cite[Prop. 2.5.2]{b11}). Let ${\tilde{\pi}}_{h, k}^\mathcal{p}$ and ${\tilde{\pi}}_{h, k}^\mathcal{u}$ be any such pair for the de Rham complex, i.e. such that
\begin{align}\label{eq3.22} 
\begin{tikzcd}[ampersand replacement=\&, column sep = 4.6em]
H(\nabla\cdot;\mathcal{M})
\arrow[r, "{\begin{psmallmatrix}-\nabla\cdot & 0 \\ 0 & -\nabla\cdot \end{psmallmatrix}}"] 
\arrow[d, "{\tilde{\pi}}_{h, k}^\mathcal{p}"]
\& L^2\mathcal{V} 
\arrow[d, "{\tilde{\pi}}_{h, k}^\mathcal{u}"]
\\ 
X_{h,k}^\mathcal{p} 
\arrow[r, "{\begin{psmallmatrix} -\nabla\cdot & 0 \\ 0 & -\nabla\cdot \end{psmallmatrix}}"] 
\& X_{h,k}^\mathcal{u} 
\end{tikzcd}
\end{align}
is a commuting diagram, and ${\tilde{\pi}}_{h, k}^\mathcal{p}$ and  ${\tilde{\pi}}_{h, k}^\mathcal{u}$ are uniformly bounded projections in $H(\nabla\cdot;\mathcal{M})$ and $L^2\mathcal{V}$, respectively. 

To obtain a bounded cochain projection for the Cosserat complex, we  will require the cross-product between the vector-interpretation of the spatial coordinate and vectors. Thus we define for $v\in\mathbb{V}$ (or similarly $v\in\mathbb{R}^3$) the element $Uv\in\mathbb{V}$ as:
\begin{subequations}\label{eq3.3}
\begin{align}
    \left(Uv\right)_i &= \bm{x}\times v=x_{i+1}v_{i-1}-x_{i-1}v_{i+1}. \label{eq3.3a}
\end{align}
We extend the definition to matrix fields, by letting the cross product act on the 
first index, i.e. 
\begin{align}
    \left(U\sigma\right)_{i,j} &= x_{i+1}\sigma_{i-1,j}-x_{i-1}\sigma_{i+1,j}.
\end{align}
\end{subequations}

The definition of $U$ allows us to introduce automorphisms for both $\mathcal{V}$ and $\mathcal{M}$ as follows:
\begin{align}\label{eq3.9}
    \Phi &= \begin{pmatrix}I&0\\U&I\end{pmatrix} &    &\textrm{with inverse} &     \Phi^{-1}&=\begin{pmatrix}I&0\\-U&I \end{pmatrix}.
\end{align}

Moreover, we calculate the bound: 
\begin{align}\label{eq3.10}
    \|\Phi\|=\sup_{\mathcal{v}\in\mathcal{V}} \frac{\|\Phi\mathcal{v}\|}{\|\mathcal{v}\|} \le \sup_{\substack{\mathcal{v}\in\mathcal{V} \\ x\in\Omega}} \frac{(1+\|x\|)\|\mathcal{v}\|}{\|\mathcal{v}\|}=\sup_{x\in\Omega}(1+\|x\|)=C<\infty,
\end{align}
where $C$ depends only on the domain. By a similar calculation we find that also $\| \Phi^{-1} \| \le C$.

Since $\Phi$ is an isomorphism between the Cosserat complex and the de Rham complex \cite{BGGSequences}, we propose the following operators
\begin{align}\label{eq3.23}
    \pi^\mathcal{p} &\coloneqq \Phi{\tilde{\pi}}_{h, k}^\mathcal{p}\Phi^{-1}, &
    \pi^\mathcal{u} &\coloneqq \Phi{\tilde{\pi}}_{h, k}^\mathcal{u}\Phi^{-1}.
\end{align}
Following Definition 3.7, we first verify that these operators are projections. For $\pi^\mathcal{u}$, consider any $\left(u,r\right)\in X_{h,k}^\mathcal{u}=P_{-k}\left(\Omega_h\right)^3\times P_{-\left(k+1\right)}\left(\Omega_h\right)^3$, and recall that $U$ is a homogeneous linear function in $x$ (cf. \eqref{eq3.3a}). Therefore $\Phi^{-1}\left(u,r\right)=\left(u, r+Uu\right)\in P_{-k}\left(\Omega_h\right)^3\times P_{-\left(k+1\right)}\left(\Omega_h\right)^3=X_{h,k}^\mathcal{u}$. Now, since ${\tilde{\pi}}_{h, k}^\mathcal{u}$ is a projection onto $X_{h,k}^\mathcal{u}$, we derive 
\begin{align}\label{eq3.24}
    \pi^\mathcal{u} \begin{pmatrix}
        u \\ r
    \end{pmatrix} =\Phi\left({\tilde{\pi}}_{h, k}^\mathcal{u} \Phi^{-1} \begin{pmatrix}
        u \\ r
    \end{pmatrix} \right)= \Phi \left(\Phi^{-1} \begin{pmatrix}
        u \\ r
    \end{pmatrix} \right) = \begin{pmatrix}
        u \\ r
    \end{pmatrix},
\end{align}
and thus $\pi^\mathcal{u}$ is also a projection onto $X_{h,k}^\mathcal{u}$. 

 To show that also $\pi^\mathcal{p}$ is a projection, we start by showing that that $\Phi^{-1}X_{h,k}^\mathcal{p}\subseteq X_{h,k}^\mathcal{p}$.  We first consider the case $W_k=BDM_{k+1}$ and select any $\left(\sigma,\omega\right)\in{BDM}_{k+1}$ $\left(\Omega_h\right)^3\times{BDM}_{k+2}\left(\Omega_h\right)^3$.  Then also $\Phi^{-1}\left(u,r\right)=\left(\sigma,\omega+U\sigma\right)\in{BDM}_{k+1}\left(\Omega_h\right)^3\times{BDM}_{k+2} \left(\Omega_h\right)^3$, since the operator $U$ increases polynomial degree, but does not reduce regularity. 

Secondly for $W_k={RT}_k$, consider $\left(\sigma,\omega\right)\in{RT}_k\left(\Omega_h\right)^3\times{RT}_{k+1}\left(\Omega_h\right)^3$. Again we have $\Phi^{-1}\left(u,r\right)=\left(\sigma,\omega+U\sigma\right)\in{RT}_k\left(\Omega_h\right)^3\times{RT}_{k+1}\left(\Omega_h\right)^3$ since the operator $U$ is a linear combinations of products with coordinates, and by the definition of $RT_k$ spaces given in Section \ref{sec2.1}, it holds that for any coordinate $x_i$, and $w\in{RT}_k\left(\Omega_h\right)$ then $x_iw\in{RT}_{k+1}\left(\Omega_h\right)$. Finally, a similar calculation as in (3.24) completes the verification that $\pi^\mathcal{p}$ is a projection onto $X_{h,k}^\mathcal{p}$ for both cases of $W_k$.

From \eqref{eq3.10}, we know that the operators $\Phi$ and $\Phi^{-1}$ are bounded (independent of $h$ and $k$), and thus $\pi^\mathcal{p}$ and $\pi^\mathcal{u}$ inherit the boundedness of ${\tilde{\pi}}_{h, k}^\mathcal{p}$ and ${\tilde{\pi}}_{h, k}^\mathcal{u}$, respectively.

In order to verify commutativity, we first note that $\Phi$ allows us to explicitly rewrite
\begin{align}\label{eq3.25}
    \mathcal{S}=\Phi\begin{pmatrix}-\nabla\cdot & 0 \\ 0 & -\nabla\cdot \end{pmatrix} \Phi^{-1}.
\end{align}
Using this expression, we can easily verify the commutativity of the projection operators: 
\begin{align}\label{eq3.26}
\mathcal{S}\pi^\mathcal{p}=\Phi\begin{pmatrix}-\nabla\cdot&0\\0&-\nabla\cdot\\\end{pmatrix}\Phi^{-1}\Phi{\tilde{\pi}}_{h, k}^\mathcal{p}\Phi^{-1}=\Phi{\tilde{\pi}}_{h, k}^\mathcal{u}\begin{pmatrix}-\nabla\cdot&0\\0&-\nabla\cdot\\\end{pmatrix}\Phi^{-1}=\pi^\mathcal{u}\mathcal{S}.
\end{align}
We have thus established the existence of bounded cochain projections through an explicit construction.

3) is a corollary of Theorem \ref{th3.8}, and the error rates follow from the standard approximation properties of the spaces \cite{b11}, and the equivalence between the $\mathcal{A}$-weighted and unweighted norms on $L^2\mathcal{M}$ (a direct consequence of Assumptions \ref{a2.1} and \ref{a2.2a}). In particular, we have as a consequence of Theorem \ref{th3.8} that
\begin{align}
    \|\sigma-\sigma_h\|_{A_\sigma}&+\|\nabla\cdot\left(\sigma-\sigma_h\right)\|+\|u-u_h\|+\|\omega-\omega_h\|_{A_\omega} \nonumber\\ 
    &+\|S\left(\sigma-\sigma_h\right)-\nabla\cdot\left(\omega-\omega_h\right)\|+\|r-r_h\| =\mathcal{O}\left(h^{k+1}\right).\label{eq:3.38}
\end{align}
The stress norms satisfy
\begin{align*}
    \|\sigma-\sigma_h\|\le \left(A_\sigma^-\right)^{-\frac{1}{2}} \|\sigma-\sigma_h\|_{A_\sigma}  &=\mathcal{O}\left(h^{k+1}\right),  \\
     \|\omega-\omega_h\|\le \left(A_\omega^-\right)^{-\frac{1}{2}} \|\omega-\omega_h\|_{A_\omega} &=\mathcal{O}\left(h^{k+1}\right). 
\end{align*}
For the compound norm, we then calculate
\begin{align*}
\|\nabla\cdot\left(\omega-\omega_h\right)\|
&\leq \|S\left(\sigma-\sigma_h\right)-\nabla\cdot\left(\omega-\omega_h\right)\| + \|S\left(\sigma-\sigma_h\right)\|\\ 
&\leq \|S\left(\sigma-\sigma_h\right)-\nabla\cdot\left(\omega-\omega_h\right)\| + \|\sigma-\sigma_h\|\|S\|.
\end{align*}
Now, since $\|S\|$ is finite, it follows from equation \eqref{eq:3.38} that 
\begin{align}
    \|\nabla\cdot\left(\omega-\omega_h\right)\|\le \mathcal{O}\left(h^{k+1}\right),
\end{align}
from which \eqref{eq3.20} follows. 
\end{proof}

Higher convergence rates can to various degrees be obtained for all four variables. 
We present two different approaches that lead to complementary results in the following two theorems.

\begin{theorem}[Improved convergence for $BDM_{k+1}$] \label{th3.10}
Let $W_k = BDM_{k+1}$, and assume that the true solution has sufficient regularity. Then the
following improved convergence rates hold: 
\begin{align} \label{eq3.27}
 \left\|\sigma -\sigma _h\right\|+\left\|\omega -\omega
_h\right\|_{H\left(\nabla \cdot; \mathbb{M}\right)}+\left\|r-r_h\right\|=\mathcal{O}\left(h^{k+2}\right).
\end{align}
\end{theorem}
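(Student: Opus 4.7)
The plan is to establish the improved rates in three stages: first upgrade the $L^2$ convergence of the stresses $\sigma,\omega$ via a specifically tailored projection, then upgrade $\|\nabla\cdot(\omega-\omega_h)\|$ via the discrete angular-momentum balance, and finally obtain the improved rotation estimate from a discrete inf-sup argument.

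First I would construct a bounded linear map $\bar\pi^\mathcal{p}:H(\nabla\cdot;\mathcal{M})\to X_{h,k}^\mathcal{p}$ that commutes, through $\mathcal{S}$, with the $L^2$ projection $P_h^\mathcal{u}$ onto $X_{h,k}^\mathcal{u}$, in the sense $\mathcal{S}\bar\pi^\mathcal{p}\mathcal{p}=P_h^\mathcal{u}\mathcal{S}\mathcal{p}$. For the first block the canonical rowwise $BDM_{k+1}$ projection $\Pi^{BDM}\sigma$ already yields $\nabla\cdot\Pi^{BDM}\sigma=P_h^u\nabla\cdot\sigma$, where $P_h^u$ denotes the $L^2$ projection onto $P_{-k}(\Omega_h)^3$. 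For the couple-stress block I would set $\bar\pi^\mathcal{p}\omega=\Pi^{BDM}\omega+\delta$, choosing $\delta\in BDM_{k+2}(\Omega_h)^3$ to absorb the mismatch introduced by the off-diagonal $S$ term:
\begin{align*}
-\nabla\cdot\delta = P_h^r\bigl(S(\sigma-\Pi^{BDM}\sigma)\bigr),
\end{align*}
where $P_h^r$ is the $L^2$ projection onto $P_{-(k+1)}(\Omega_h)^3$. Existence of $\delta$ and its $L^2$ bound follow from the surjectivity and inf-sup of the $BDM_{k+2}/P_{-(k+1)}$ pair, so $\|\delta\|\leq C\|\sigma-\Pi^{BDM}\sigma\|=\mathcal{O}(h^{k+2})$ and consequently $\|\mathcal{p}-\bar\pi^\mathcal{p}\mathcal{p}\|_\mathcal{A}=\mathcal{O}(h^{k+2})$.

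With this projection in hand, the discrete balance gives $\mathcal{S}\mathcal{p}_h=P_h^\mathcal{u}\mathcal{f}=P_h^\mathcal{u}\mathcal{S}\mathcal{p}=\mathcal{S}\bar\pi^\mathcal{p}\mathcal{p}$, so $e\coloneqq\bar\pi^\mathcal{p}\mathcal{p}-\mathcal{p}_h$ lies in $\ker\mathcal{S}\cap X_{h,k}^\mathcal{p}$. Subtracting the discrete constitutive law from the continuous one, tested against $e$, annihilates the coupling term, producing $(\mathcal{p}-\mathcal{p}_h,e)_\mathcal{A}=0$, from which Cauchy--Schwarz yields $\|e\|_\mathcal{A}\leq\|\mathcal{p}-\bar\pi^\mathcal{p}\mathcal{p}\|_\mathcal{A}$, and the triangle inequality gives $\|\sigma-\sigma_h\|+\|\omega-\omega_h\|=\mathcal{O}(h^{k+2})$. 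For the divergence of $\omega$, rearranging the second discrete equation produces
\begin{align*}
\nabla\cdot(\omega-\omega_h) = S(\sigma-\sigma_h) + (I-P_h^r)f_r,
\end{align*}
both terms of which are $\mathcal{O}(h^{k+2})$ by the previous stress estimate, boundedness of $S$, and the approximation property of $P_h^r$. For the rotation, split $\|r-r_h\|\leq\|r-P_h^r r\|+\|P_h^r r-r_h\|$: the first term is $\mathcal{O}(h^{k+2})$ by approximation, while the second is controlled via the discrete inf-sup (implicit in Theorem~\ref{th3.9}) applied to the Galerkin identity $(\mathcal{p}-\mathcal{p}_h,\mathcal{p}_h')_\mathcal{A}=(P_h^\mathcal{u}\mathcal{u}-\mathcal{u}_h,\mathcal{S}\mathcal{p}_h')$, whose second block supplies $\|P_h^r r-r_h\|\leq C\|\mathcal{p}-\mathcal{p}_h\|_\mathcal{A}=\mathcal{O}(h^{k+2})$.

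The main obstacle will be the construction in the second paragraph: because of the off-diagonal $S$, no componentwise projection can satisfy $\mathcal{S}\bar\pi^\mathcal{p}=P_h^\mathcal{u}\mathcal{S}$, so the local correction $\delta$ is indispensable. This construction relies simultaneously on the exactness $\nabla\cdot:BDM_{k+2}(\Omega_h)^3\to P_{-(k+1)}(\Omega_h)^3$ and on the $\mathcal{O}(h^{k+2})$ approximation rate of $\Pi^{BDM}\sigma$; the analogous correction is not available within the $RT$ hierarchy, which is precisely why the improved rates of this theorem pertain specifically to the $BDM$ choice.
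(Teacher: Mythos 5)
Your proof is correct and reaches the same two-stage architecture as the paper --- first an improved $L^2$ estimate for the stresses, then a bootstrap for $\nabla\cdot(\omega-\omega_h)$ and $r-r_h$ --- but it fills in both stages by different means. For the first stage, the paper simply cites the abstract improved $L^2$ estimate for mixed methods (Arnold--Falk--Winther, Thm.~3.11), bounding $\|\mathcal{p}-\mathcal{p}_h\|$ by $\inf\|\mathcal{p}-\mathcal{p}'\| + h\inf\|\mathcal{S}(\mathcal{p}-\mathcal{p}')\|$ and then invoking the $h^{k+2}$ and $h^{k+1}$ approximation orders of $BDM_{k+1}$; your construction of the corrected projection $\bar\pi^{\mathcal{p}}$ with the divergence lift $\delta$, followed by the kernel argument $(\mathcal{p}-\mathcal{p}_h,e)_{\mathcal{A}}=0$, is in effect a self-contained proof of that cited estimate adapted to the off-diagonal $S$-coupling, and it is sound (the uniform $L^2$ bound on $\delta$ from the $BDM_{k+2}/P_{-(k+1)}$ inf-sup is the only point that deserves an explicit sentence). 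For the second stage, the paper derives $\|\omega-\omega_h\|_{H(\nabla\cdot)}+\|r-r_h\|\le C\|Se_\sigma\|$ by viewing the $(\omega,r)$ error equations as a mixed Laplace-type subproblem, whereas you obtain $\nabla\cdot(\omega-\omega_h)$ directly from the pointwise momentum identity $\mathcal{S}\mathcal{p}_h=P_h^{\mathcal{u}}\mathcal{f}$ and the rotation from the discrete inf-sup applied to the Galerkin orthogonality; your route is arguably the more transparent of the two (and yields $\|\varpi_{h,k}u-u_h\|=\mathcal{O}(h^{k+2})$ as a free by-product, consistent with Theorem~\ref{thm311}). One small correction to your closing remark: for $W_k=RT_k$ the divergence correction $\delta$ \emph{is} still available, since $\nabla\cdot$ maps $RT_{k+1}(\Omega_h)^3$ onto $P_{-(k+1)}(\Omega_h)^3$ as well; what actually breaks the argument in the $RT$ case is that $\|\sigma-\Pi^{RT_k}\sigma\|$ is only $\mathcal{O}(h^{k+1})$, so the first-stage stress estimate, not the lift, is the obstruction.
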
 
\begin{proof}
We recall the improved $L^2$ estimates available for mixed methods \cite[Theorem 3.11]{arnold2010finite}:
\begin{align}
  \left\|\mathcal{p}-\mathcal{p}_h\right\| \leq C\left(\inf_{p^\prime \in X_h^{\mathcal{p}}} \left\|\mathcal{p}-\mathcal{p}^\prime\right\|+h \inf_{p^\prime \in X_h^{\mathcal{p}}}
  \left\|\mathcal{S}\left(\mathcal{p}-\mathcal{p}^\prime\right)\right\|\right). \label{eq328}
\end{align}

The approximation
properties of $X_h^{\mathcal{p}}$ and  $X_h^{\mathcal{u}}$ depend on the regularity of the solution,
and will be dominated by the lower-order spaces for displacement and stress. Concretely,  $\mathcal{p}_h \in X_{h, k}^{\mathcal{p}}$
implies that  $\sigma _h \in \mathit{BDM}_{k+1}\left(\Omega_h\right)^3$, thus we
obtain from \cite[Prop. 2.5.4]{b11} that 
\begin{align}
  \inf_{p^\prime \in X_h^{\mathcal{p}}} \left\|\mathcal{p}-\mathcal{p}^\prime\right\| &\leq Ch^{k+2}\left\|\sigma \right\|_{k+2}
  , &
  \inf_{p^\prime \in X_h^{\mathcal{p}}} \left\|\mathcal{S}\left(\mathcal{p}-\mathcal{p}^\prime\right)\right\| &\leq Ch^{k+1}\left\|\nabla \cdot \sigma
  \right\|_{k+1}. \label{eq329}
\end{align}

Thus for sufficiently regular  $\sigma $, we have from \eqref{eq328} and \eqref{eq329} that 
\begin{align}
   \left\|\sigma -\sigma _h\right\|+\left\|\omega -\omega
  _h\right\| = \mathcal{O}\left(h^{k+2}\right).
\end{align}

Consider now the error equation for the rotation and couple stress:
\begin{subequations} \label{eq330}
\begin{align} 
 \left(e_{\omega},\omega ^\prime\right)_\mathcal{A}-\left(e_r,\nabla \cdot \omega
^\prime\right) &= 0, \\
 \left(\nabla \cdot e_{\omega },r^\prime\right) &= \left(Se_{\sigma
},r^\prime\right).
\end{align}
\end{subequations}

Since \eqref{eq330} is a Laplace-type elliptic
equation, it follows from standard theory that the solution  $\left(e_{\omega },e_r\right)$ is bounded and satisfies 
\begin{align} \label{eq331}
 \left\|\omega -\omega _h\right\|_{H\left(\nabla \cdot ;M\right)}+\left\|r-r_h\right\| \leq C\left\|Se_{\sigma
}\right\|=\mathcal{O}\left(h^{k+2}\right),
\end{align}
where the last equality follows from \eqref{eq329}.
The theorem follows by combining \eqref{eq329} and \eqref{eq331}. 
\end{proof}

For the second analysis of improved convergence rates, applicable also to $W_k=RT_k$, we consider a duality argument. Let $\varpi_{h,k}$ denote the 
$L^2$-projection onto the piecewise polynomials $P_{-k}\left(\Omega_h\right)^3$, and define $\varphi =\left(\varphi _u,\varphi _r\right)$ as the solution to the Cosserat
equations \eqref{eq2.10} with right-hand side
\begin{align}
 f_u=\varpi_{h,k}\left(u-u_h\right) \ \ \ \text{and} \ \  f_r=\varpi_{h,k+1}\left(r-r_h\right).
\end{align}

Now, for sufficiently regular domains,  $\varphi$ will enjoy higher regularity than the minimum
regularity implied by the differential operators (see e.g.~\cite{grisvard2011elliptic}). Then the following theorem
applies.

\begin{theorem}[Improved convergence of rotation and displacement] \label{thm311}
Let
 $W_k \in \left\{BDM_{k+1},\mathit{RT}_k\right\}$, and  $\varphi $ as defined above be 
$H^2$ regular in the sense that
\begin{align} \label{ineq2}
    \left\|\mathcal{A}^{-1}\mathcal{S}^* \varphi \right\|_1
    + \left\|\varphi \right\|_2
    &\lesssim
    \left\|\varpi_h e_\mathcal{u}\right\|.
\end{align}
Then the rotation $r_h$ and displacement $u_h$ satisfy the optimal rates
\begin{align}
 \left\|r-r_h\right\|+\left\|\varpi_{h,k}u-u_h\right\| = \mathcal{O}\left(h^{k+2}\right).
\end{align}
\end{theorem}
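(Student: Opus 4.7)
The plan is an Aubin--Nitsche duality argument tailored to the mixed Cosserat system, using the dual solution $(\psi, \varphi)$ with $\psi = \mathcal{A}^{-1}\mathcal{S}^*\varphi$ specified in the theorem. Writing $e_\mathcal{p} = \mathcal{p} - \mathcal{p}_h$, $e_\mathcal{u} = \mathcal{u} - \mathcal{u}_h$, and letting $\varpi_h = (\varpi_{h,k}, \varpi_{h,k+1})$ denote the composite $L^2$-projection onto $X_{h,k}^\mathcal{u}$, the construction of $\varphi$ yields $\mathcal{S}\psi = \varpi_h e_\mathcal{u} \in X_{h,k}^\mathcal{u}$, and hence
\begin{align*}
\|\varpi_h e_\mathcal{u}\|^2 = (\mathcal{S}\psi, \varpi_h e_\mathcal{u}) = (\mathcal{S}\psi, e_\mathcal{u}).
\end{align*}

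The key algebraic step is to insert the bounded cochain projection $\pi^\mathcal{p}$ from Theorem~\ref{th3.9}. Splitting $\mathcal{S}\psi = \mathcal{S}(\psi - \pi^\mathcal{p}\psi) + \mathcal{S}\pi^\mathcal{p}\psi$ and using the commutativity $\mathcal{S}\pi^\mathcal{p} = \pi^\mathcal{u}\mathcal{S}$, the first contribution becomes $(I - \pi^\mathcal{u})(\varpi_h e_\mathcal{u}) = 0$, since $\pi^\mathcal{u}$ is a projection onto $X_{h,k}^\mathcal{u}$ and $\varpi_h e_\mathcal{u}$ already lives there. The primal error equation tested with $\mathcal{p}^\prime = \pi^\mathcal{p}\psi \in X_{h,k}^\mathcal{p}$ then rewrites the remainder as $(\mathcal{S}\pi^\mathcal{p}\psi, e_\mathcal{u}) = (e_\mathcal{p}, \pi^\mathcal{p}\psi)_\mathcal{A}$. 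Decomposing once more, $(e_\mathcal{p}, \pi^\mathcal{p}\psi)_\mathcal{A} = (e_\mathcal{p}, \psi)_\mathcal{A} + (e_\mathcal{p}, \pi^\mathcal{p}\psi - \psi)_\mathcal{A}$; testing the dual equation against $e_\mathcal{p}$ gives $(e_\mathcal{p}, \psi)_\mathcal{A} = (\mathcal{S}e_\mathcal{p}, \varphi)$, and the discrete Galerkin orthogonality $(\mathcal{S}e_\mathcal{p}, v) = 0$ for every $v \in X_{h,k}^\mathcal{u}$ further reduces this to $(\mathcal{S}e_\mathcal{p}, \varphi - \pi^\mathcal{u}\varphi)$.

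Applying Cauchy--Schwarz, the energy-error rates $\|e_\mathcal{p}\|_\mathcal{A} + \|\mathcal{S}e_\mathcal{p}\| = \mathcal{O}(h^{k+1})$ from Theorem~\ref{th3.9}, the first-order approximation bounds $\|\varphi - \pi^\mathcal{u}\varphi\| \leq Ch\|\varphi\|_1$ and $\|\psi - \pi^\mathcal{p}\psi\|_\mathcal{A} \leq Ch\|\psi\|_1$ (both consequences of the uniform boundedness of the cochain projections combined with standard polynomial approximation), and the regularity hypothesis~\eqref{ineq2}, we obtain
\begin{align*}
\|\varpi_h e_\mathcal{u}\|^2 \leq Ch^{k+2}(\|\varphi\|_2 + \|\psi\|_1) \leq Ch^{k+2}\|\varpi_h e_\mathcal{u}\|,
\end{align*}
whence $\|\varpi_h e_\mathcal{u}\| = \mathcal{O}(h^{k+2})$.

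The displacement bound $\|\varpi_{h,k}u - u_h\| = \mathcal{O}(h^{k+2})$ is then immediate, because $u_h \in P_{-k}(\Omega_h)^3$ forces $\varpi_{h,k}u - u_h = \varpi_{h,k}e_u$. For the rotation, the triangle inequality $\|r - r_h\| \leq \|r - \varpi_{h,k+1}r\| + \|\varpi_{h,k+1}e_r\|$ closes the argument: the first summand is $\mathcal{O}(h^{k+2})$ by polynomial approximation in $P_{-(k+1)}(\Omega_h)^3$ under the assumed regularity of $r$, and the second is controlled by the duality estimate just established. The main obstacle is orchestrating the three chained splittings so that every Galerkin-type cancellation fires and only one extra factor of $h$ is accrued from each of the two surviving approximation terms; this is precisely what hypothesis~\eqref{ineq2} is engineered to deliver via the control $\|\psi\|_1 + \|\varphi\|_2 \lesssim \|\varpi_h e_\mathcal{u}\|$.
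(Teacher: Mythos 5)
Your proposal is correct and follows essentially the same Aubin--Nitsche duality argument as the paper: the same choice of discrete test function $\pi^\mathcal{p}\psi$ with $\mathcal{S}\pi^\mathcal{p}\psi = \varpi_h e_\mathcal{u}$, the same two splittings exploiting the primal error equation and Galerkin orthogonality, and the same closing triangle inequality for the rotation. The only (welcome) difference is that you spell out explicitly, via the commutativity $\mathcal{S}\pi^\mathcal{p}=\pi^\mathcal{u}\mathcal{S}$, why the term $\mathcal{S}(\psi-\pi^\mathcal{p}\psi)$ vanishes, a step the paper leaves implicit.
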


\begin{proof}
The definition of  $\varphi $ implies that  $\mathcal{S}\mathcal{A}^{-1}\mathcal{S}^*\varphi =\varpi_h e_\mathcal{u}$ with 
$e_\mathcal{u}:=\left(e_u,e_r\right)=\left(u-u_h,r-r_h\right)$ and the $L^2$-projection $\varpi_h e_\mathcal{u}=\left(\varpi
_{h,k}e_u,\varpi_{h,k+1}e_r\right)$. Corollary~\ref{cor3.5} ensures that the solution is stable with 
\begin{align} \label{ineq1}
 \left\|\varphi \right\|+\left\|\mathcal{S}^* \varphi \right\| \leq C\left\|\varpi_h e_\mathcal{u}\right\|.
\end{align}

Introducing also  $e_\mathcal{p}:=\left(e_{\sigma },e_{\omega }\right)=\left(\sigma -\sigma _h,\omega -\omega _h\right)$, the
error equations for the Cosserat approximation are obtained by subtracting \eqref{eq3.17} from \eqref{eq3.15}: 
\begin{subequations}
\begin{align}
 \left(e_\mathcal{p},\mathcal{p}^\prime\right)_\mathcal{A}-\left(e_\mathcal{u},\mathcal{S}\mathcal{p}^\prime\right) &= 0, \label{eq350a} \\
 \left(\mathcal{S}e_\mathcal{p},\mathcal{u}^\prime\right) &= 0\rev{,} \label{eq350b}
\end{align}
\end{subequations}
which holds for all $\left(\mathcal{p}^\prime,\mathcal{u}^\prime\right) \in X_h^{\mathcal{p}}\times X_h^{\mathcal{u}}$.

Choosing now  $\mathcal{p}^\prime=\pi_h\mathcal{A}^{-1}\mathcal{S}^* \varphi $, this implies that  $\mathcal{S}\mathcal{p}^\prime=\varpi_h e_\mathcal{u}$, thus we obtain from
\eqref{eq350a} that
\begin{align} \label{eq351}
  \left\|\varpi_h e_\mathcal{u}\right\|^2 
  =\left(e_\mathcal{u},\varpi_h e_\mathcal{u}\right)
  =\left(e_\mathcal{u},\mathcal{S}\pi_h\mathcal{A}^{-1}\mathcal{S}^* \varphi \right)
  =\left(e_\mathcal{p},\pi_h\mathcal{A}^{-1}\mathcal{S}^* \varphi \right)_\mathcal{A}.
\end{align}

We now bound the right-hand side term:
\begin{align} \label{eq352}
    \left(e_\mathcal{p},\pi_h\mathcal{A}^{-1}\mathcal{S}^* \varphi \right)_\mathcal{A}
    &=-\left(e_\mathcal{p},\left(1-\pi_h\right)\mathcal{A}^{-1}\mathcal{S}^* \varphi\right)_\mathcal{A}
    +\left(e_\mathcal{p},\mathcal{A}^{-1}\mathcal{S}^* \varphi \right)_\mathcal{A} \nonumber\\
    &\lesssim  \|e_\mathcal{p}\| \|\left(1-\pi_h\right)\mathcal{A}^{-1}\mathcal{S}^* \varphi \|
    +\left(e_\mathcal{p},\mathcal{S}^* \varphi \right) \nonumber\\
    &\leq h \|e_\mathcal{p}\| \|\mathcal{A}^{-1}\mathcal{S}^* \varphi \|_1
    +\left(\mathcal{S}e_\mathcal{p},\left(1-\pi_h\right)\varphi
    \right)
    +\left(\mathcal{S}e_\mathcal{p},\pi_h\varphi \right) \nonumber\\
    &\leq h \|e_\mathcal{p}\| \|\mathcal{A}^{-1}\mathcal{S}^* \varphi \|_1
    +h^2 \|\mathcal{S}e_\mathcal{p}\| \|\varphi \|_2+0.
\end{align}

Here we have used equation \eqref{eq350b} to eliminate the last term, and moreover the fact that since  $\varphi $ is $H^2$-regular, then \cite{b11}:
\begin{align}
 \left\|\left(1-\pi_h\right)\varphi \right\| &\le h^2\left\|\varphi \right\|_2 
 & 
 \left\|\left(1-\pi
_h\right)\mathcal{A}^{-1}\mathcal{S}^* \varphi \right\| &\le h\left\|\mathcal{A}^{-1}\mathcal{S}^* \varphi \right\|_1.
\end{align}

Combining \eqref{eq351} and \eqref{eq352}, and using inequalities \eqref{ineq1} and \eqref{ineq2}, we obtain
\begin{align} \label{eq: e_r 1}
 \left\|\varpi_{h,k}e_u\right\|+\left\|\varpi_{h,k+1}e_r\right\| \leq C\left\|\varpi
_h e_\mathcal{u}\right\| = \mathcal{O}\left(h^{k+2}\right).
\end{align}

However, for the rotations, we also note that for sufficiently smooth solutions  $r$, it holds that:
\begin{align} \label{eq: e_r 2}
\left\|\varpi_{h,k+1}r-r\right\| = \mathcal{O}\left(h^{k+2}\right).
\end{align}

By combining \eqref{eq: e_r 1} and \eqref{eq: e_r 2}, we obtain
\begin{align*}
 \left\|e_r\right\|=\left\|r-\varpi_{h,k+1}r+\varpi_{h,k+1}r-r_h\right\| \leq \left\|\varpi
_{h,k+1}r-r\right\|+\left\|\varpi_{h,k+1}e_r\right\| = \mathcal{O}\left(h^{k+2}\right).
\end{align*}

This proves the theorem. 
\end{proof}


\begin{remark}[Lack of stability with respect to $A_\omega^+$]\label{r3.12} Assumption \ref{a2.2a} ensures that $A_\omega^-$ and $A_\omega^+$ are bounded from below and above, respectively, and this is an essential component of the proof of the stability of the conforming MFE method defined in Theorem \ref{th3.9}. This has the practical implication that the conforming method of Theorem \ref{th3.9} will not be stable for degeneracy to normal elastic material laws (e.g. if $\ell \downarrow 0$ and thus $A_\omega^+\rightarrow\infty$). 
\end{remark}

\section{Weakly coupled mixed-finite elements}\label{sec4}
In this section, we wish to dispense with Assumption \ref{a2.2a}, which was a prerequisite for the development in Section \ref{sec3}. As such, we will dispense of the identification between the Cosserat equations with the Cosserat complex, since we can no longer expect $\left(\mathcal{p},\mathcal{p}^\prime\right)_\mathcal{A}$ to be a continuous bilinear form, and thus it fails to be an inner product on $L^2\mathcal{M}$.

Therefore, we will analyze the weak variational form of \eqref{eq2.10} directly, as stated in \eqref{eq3.15}. Thereafter, we will again consider MFE discretization as in \eqref{eq3.17}, however, this time we will have different conditions on the spaces, which results in a different discretization.

Throughout this section, both Assumptions \ref{a2.1} and \ref{a2.2b} will be assumed. Moreover, we emphasize that Remark \ref{r2.5} applies, that is to say that for isotropic materials, the constants appearing in the various proofs will be independent of the coefficient $\lambda_\sigma$.

\subsection{The degenerate linearized Cosserat equations}\label{sec4.1}
We begin by restating equations \eqref{eq2.7} subject to the substitution $A_\omega\rightarrow\ell^{-2}{\tilde{A}}_\omega$: Find $\left(\sigma,\omega,u,r\right)\in\mathbb{M}\times\mathbb{M}\times\mathbb{V}\times\mathbb{V}$ such that 

\begin{align}\label{eq4.1}
    \begin{pmatrix}A_\sigma&&-\nabla&-S^\ast\\&\ell^{-2}{\tilde{A}}_\omega&&-\nabla\\-\nabla\cdot&&&\\S&-\nabla\cdot&&\end{pmatrix}
    \begin{pmatrix}
        \sigma \\ \omega \\ u \\ r
    \end{pmatrix} = \begin{pmatrix}
        0 \\ 0 \\ f_u \\ f_r
    \end{pmatrix},
\end{align}
subject to the boundary conditions \eqref{eq: bc}. We recognize that the obstacle to showing well-posedness is the degeneracy of the diagonal block $\ell^{-2}{\tilde{A}}_\omega$ as $\ell \downarrow 0$, and therefore make the change of variables $\tilde{\omega}=\ell^{-1}\omega$, following the same strategy as \cite{arbogast2017mixed,boon2018robust}. We then obtain the problem: Find $\left(\sigma,\tilde{\omega},u,r\right)\in\mathbb{M}\times\mathbb{M}\times\mathbb{V}\times\mathbb{V}$ such that 
\begin{align} \label{eq4.2}
    \begin{pmatrix}A_\sigma&&-\nabla&-S^\ast\\&{\tilde{A}}_\omega&&-\ell\nabla\\-\nabla\cdot&&&\\S&-\nabla\cdot\ell&& \end{pmatrix} \begin{pmatrix}
        \sigma \\ \tilde{\omega} \\ u \\ r
    \end{pmatrix}
    =\begin{pmatrix}
        0 \\ 0 \\ f_u \\ f_r
    \end{pmatrix}.
\end{align}
Note that due to this change of variables, we are now able to consider the limit case of $\ell = 0$ in parts of the domain. This motivates defining the scaled block operators 
\begin{align}\label{eq4.3}
    \tilde{\mathcal{A}} &= \begin{pmatrix}A_\sigma&\\&{\tilde{A}}_\omega\\\end{pmatrix}, &
    \mathcal{S}_\ell &= \begin{pmatrix}-\nabla\cdot&\\S&-\nabla\cdot\ell\\\end{pmatrix}
\end{align}
and we can write for $\left(\sigma,\tilde{\omega}\right)=\tilde{\mathcal{p}}\in\mathcal{M}$ and $\left(u,r\right)=\mathcal{u}\in\mathcal{V}$:
\begin{align}\label{eq4.4}
    \begin{pmatrix}\tilde{\mathcal{A}}&-\mathcal{S}_{\ell}^\ast\\\mathcal{S}_\ell& \end{pmatrix}
    \begin{pmatrix}
        \tilde{\mathcal{p}} \\ \mathcal{u}
    \end{pmatrix} =
    \begin{pmatrix}
        0 \\ \mathcal{f}
    \end{pmatrix},
\end{align}
where $\mathcal{S}_{\ell}^\ast$ is the adjoint operator satisfying $\left(\mathcal{S}_{\ell}^\ast\mathcal{u},\mathcal{p}\right)=\left(\mathcal{u},\mathcal{S}_\ell\mathcal{p}\right)$ for all $\left(\mathcal{u},\mathcal{p}\right)\in\mathcal{V}\times\mathcal{M}$ with $\mathcal{u}$ satisfying \eqref{eq: bc}.

\subsection{Well-posedness of the degenerate linearized Cosserat equations}\label{sec4.2}
For the well-posedness analysis, we recognize that the space $H(\nabla\cdot;\mathbb{M})$ requires too much regularity of the couple stress when $\ell=0$. Thus, we begin by introducing the weighted space
\begin{align}\label{eq4.5}
    H(\nabla\cdot\ell;\mathbb{M}) \coloneqq \left\{\omega\in L^2\mathbb{M}\ \right|\ \nabla\cdot\left(\ell\omega\right)\in L^2\mathbb{V}\}.
\end{align}
Based on this space, we define the semi-weighted Hilbert space for $\mathcal{M}$, which we denote $H_\ell\mathcal{M}$ for simplicity, as
\begin{align}\label{eq4.6}
H_\ell\mathcal{M} \coloneqq H(\nabla\cdot;\mathbb{M})\times H(\nabla\cdot\ell;\mathbb{M}).
\end{align}
These spaces are naturally endowed with the norms:
\begin{align}
    \|\omega\|^2_{H(\nabla\cdot\ell)} &\coloneqq \|\omega\|^2+\|\nabla\cdot\left(\ell\omega\right)\|^2, & 
    \|\left(\sigma,\omega\right)\|^2_{H_\ell\mathcal{M}} &\coloneqq \|\sigma\|^2_{H(\nabla\cdot)}+\|\omega\|^2_{H(\nabla\cdot\ell)}.
\end{align}
With this choice of spaces, we establish sufficient conditions for well-posedness of \eqref{eq4.4} in the following three lemmas: 

\begin{lemma}[Continuity]\label{l4.1} 
The bilinear forms $\left(\tilde{\mathcal{A}}\mathcal{p},\mathcal{r}\right)$ and $\left(\mathcal{S}_\ell\mathcal{p},\mathcal{u}\right)$ are continuous with continuity constants independent of $\ell$.
\end{lemma}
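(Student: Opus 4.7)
The plan is to verify each of the two continuity estimates separately by direct application of Cauchy--Schwarz together with the bounds furnished by Assumptions \ref{a2.1} and \ref{a2.2b}, taking care that the $\ell$-dependence is absorbed entirely into the weighted norm on $H_\ell\mathcal{M}$ and thus never appears in the continuity constants.

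For the first bilinear form, I would write $\mathcal{p}=(\sigma,\omega)$ and $\mathcal{r}=(\tau,\rho)$ and expand
\begin{align*}
(\tilde{\mathcal{A}}\mathcal{p},\mathcal{r}) = (A_\sigma \sigma,\tau)+(\tilde{A}_\omega \omega,\rho).
\end{align*}
The upper bound in Assumption \ref{a2.1} gives $(A_\sigma\sigma,\tau)\le A_\sigma^+\|\sigma\|\|\tau\|$, and the corresponding bound on $\tilde{A}_\omega$ in Assumption \ref{a2.2b} gives $(\tilde{A}_\omega\omega,\rho)\le A_\omega^+\|\omega\|\|\rho\|$. A single Cauchy--Schwarz step on the two-component vectors then yields continuity with constant $\max(A_\sigma^+, A_\omega^+)$, which is independent of $\ell$ precisely because we rescaled away the $\ell^{-2}$ factor before formulating the bilinear form.

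For the second bilinear form, write $\mathcal{u}=(u,r)$ and compute
\begin{align*}
(\mathcal{S}_\ell \mathcal{p},\mathcal{u}) = -(\nabla\cdot\sigma, u) + (S\sigma, r) - (\nabla\cdot(\ell\omega),r).
\end{align*}
Cauchy--Schwarz controls the first and third terms by $\|\sigma\|_{H(\nabla\cdot;\mathbb{M})}\|u\|$ and $\|\omega\|_{H(\nabla\cdot\ell;\mathbb{M})}\|r\|$ respectively. For the middle term I would note that $S$, viewed as a pointwise linear map $\mathbb{M}\to\mathbb{V}$ with constant coefficients, has a finite operator norm $\|S\|$ between the respective $L^2$ spaces, so that $(S\sigma, r)\le \|S\|\,\|\sigma\|\|r\|$. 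Collecting the three estimates and applying a final Cauchy--Schwarz on the two-component vectors gives
\begin{align*}
(\mathcal{S}_\ell \mathcal{p},\mathcal{u}) \le C\,\|\mathcal{p}\|_{H_\ell\mathcal{M}}\|\mathcal{u}\|,
\end{align*}
with $C$ depending only on $\|S\|$.

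There is essentially no obstacle here: the point of the whole construction in Section \ref{sec4.1}, namely the change of variables $\tilde{\omega}=\ell^{-1}\omega$ together with the introduction of the weighted space $H(\nabla\cdot\ell;\mathbb{M})$, was designed precisely so that these continuity bounds become trivial and $\ell$-uniform. The only mildly subtle point worth spelling out in the proof is that Assumption \ref{a2.2b} provides the upper bound on $\tilde{A}_\omega$ (not on $A_\omega$ itself, which would blow up as $\ell\downarrow 0$), which is why the argument survives in the degenerate regime.
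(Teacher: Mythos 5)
Your proposal is correct and follows essentially the same route as the paper: expand each bilinear form componentwise, apply the upper bounds on $A_\sigma$ and $\tilde{A}_\omega$ from Assumptions \ref{a2.1} and \ref{a2.2b} together with Cauchy--Schwarz, and use the elementary fact that $\|\mathcal{p}\|\le\|\mathcal{p}\|_{H_\ell\mathcal{M}}$ so that all $\ell$-dependence is hidden in the weighted norm. Your closing remark about the bound living on $\tilde{A}_\omega$ rather than $A_\omega$ correctly identifies the one point that makes the estimate survive as $\ell\downarrow 0$.
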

\begin{proof}
    Let $\mathcal{p}=\left(\sigma,\ \omega\right)\in H_\ell\mathcal{M}$ and $\mathcal{r}=\left(\tau,\rho\right)\in H_\ell\mathcal{M}$. We first note the following trivial bound:
    \begin{align}
        \|\mathcal{p}\|^2_{H_\ell\mathcal{M}}=\|\sigma\|^2_{H(\nabla\cdot)}+\|\omega\|^2_{H(\nabla\cdot\ell)} \ge \|\sigma\|^2+\|\omega\|^2=\|\mathcal{p}\|^2.
    \end{align}
    From Assumptions \ref{a2.1} and \ref{a2.2b}, we now obtain the continuity:
    \begin{align}
        \left(\tilde{\mathcal{A}}\mathcal{p},\mathcal{r}\right)\le A_\sigma^+\|\sigma\|\|\tau\|+A_\omega^+\|\omega\|\|\rho\| 
        &\le \max\left(A_\sigma^+,A_\omega^+\right) \|\mathcal{p}\|\|\mathcal{r}\| \nonumber\\ 
        &\le\max{\left(A_\sigma^+,A_\omega^+\right)}\|\mathcal{p}\|_{H_\ell\mathcal{M}}\|\mathcal{r}\|_{H_\ell\mathcal{M}}.
\end{align}
    Similarly, for $\left(\mathcal{S}_\ell\mathcal{p},\mathcal{u}\right)$ with $\mathcal{p}=\left(\sigma,\ \omega\right)\in H_\ell\mathcal{M}$ and $\mathcal{u}=\left(u,r\right)\in L^2\mathcal{V}$, we obtain the following bound:
    \begin{align}
        \left(\mathcal{S}_\ell\mathcal{p},\mathcal{u}\right)\le\|\nabla\cdot\sigma\|\|u\|+\|S\sigma\|\|r\|+\|\nabla\cdot\left(\ell\omega\right)\|\|r\| \le C\|\mathcal{p}\|_{H_\ell\mathcal{M}}\|\mathcal{u}\|.
    \end{align}
\end{proof}

  \begin{lemma}[Coercivity]\label{l4.2} 
  The bilinear form $\left(\tilde{\mathcal{A}}\mathcal{p},\mathcal{r}\right)$ is coercive on the null-space of $\mathcal{S}_\ell$ in $H_\ell\mathcal{M}$, with a coercivity constant independent of $\ell$.    
  \end{lemma}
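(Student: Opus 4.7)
The plan is to exploit the coercivity of $\tilde{\mathcal{A}}$ on $L^2\mathcal{M}$ (coming directly from Assumptions \ref{a2.1} and \ref{a2.2b}), and then to control the divergence contributions to the $H_\ell \mathcal{M}$ norm using the null-space constraints. Specifically, any $\mathcal{p}=(\sigma,\omega)$ in the null-space of $\mathcal{S}_\ell$ satisfies
\begin{align*}
\nabla \cdot \sigma &= 0, & S\sigma - \nabla \cdot(\ell \omega) &= 0,
\end{align*}
so the divergence components of $\|\mathcal{p}\|_{H_\ell \mathcal{M}}^2$ collapse to quantities involving $\sigma$ alone.

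First I would apply Assumptions \ref{a2.1} and \ref{a2.2b} to obtain
\begin{align*}
(\tilde{\mathcal{A}}\mathcal{p},\mathcal{p}) = (A_\sigma\sigma,\sigma) + (\tilde{A}_\omega\omega,\omega) \ge \min(A_\sigma^-, A_\omega^-)(\|\sigma\|^2 + \|\omega\|^2),
\end{align*}
which is a lower bound in terms of the unweighted $L^2$ norm. The constants $A_\sigma^-, A_\omega^-$ are independent of $\ell$ by hypothesis, so this step poses no difficulty.

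Next I would use the null-space constraints to dispose of the divergence terms. Since $\nabla\cdot\sigma = 0$ the corresponding contribution to $\|\sigma\|_{H(\nabla\cdot)}^2$ vanishes, and since $\nabla \cdot (\ell\omega) = S\sigma$ we get
\begin{align*}
\|\nabla \cdot (\ell \omega)\| = \|S\sigma\| \le \|S\|\, \|\sigma\|,
\end{align*}
where $\|S\|$ is a fixed finite constant depending only on the definition \eqref{eq2.1}, independent of $\ell$. Combining, we obtain
\begin{align*}
\|\mathcal{p}\|_{H_\ell\mathcal{M}}^2 = \|\sigma\|^2 + \|\omega\|^2 + \|S\sigma\|^2 \le (1 + \|S\|^2)(\|\sigma\|^2 + \|\omega\|^2),
\end{align*}
so that
\begin{align*}
(\tilde{\mathcal{A}}\mathcal{p}, \mathcal{p}) \ge \frac{\min(A_\sigma^-, A_\omega^-)}{1 + \|S\|^2}\, \|\mathcal{p}\|_{H_\ell \mathcal{M}}^2,
\end{align*}
which gives the desired coercivity with a constant independent of $\ell$.

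The argument is essentially a two-line calculation once one recognizes the key algebraic identity $\nabla\cdot(\ell\omega) = S\sigma$ on the null-space, so there is no serious obstacle here; the only subtle point worth emphasizing in the write-up is that the weight $\ell$ never appears explicitly in the final bound, which is exactly why coercivity survives the degenerate limit $\ell \downarrow 0$.
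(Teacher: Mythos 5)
Your proposal is correct and follows essentially the same route as the paper: extract $\nabla\cdot\sigma=0$ and $\nabla\cdot(\ell\omega)=S\sigma$ from the kernel condition, use the boundedness of $S$ to show $\|\mathcal{p}\|_{H_\ell\mathcal{M}}\le C\|\mathcal{p}\|$ on the null space, and then invoke the $\ell$-independent lower bounds from Assumptions \ref{a2.1} and \ref{a2.2b}. The paper's proof is identical in substance, differing only in that it does not write out the explicit constant $(1+\|S\|^2)$.
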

  \begin{proof}
      Consider any $\mathcal{p}=\left(\sigma,\omega\right)\in H_\ell\mathcal{M}$ with $\mathcal{S}_\ell\mathcal{p}=0$. Then both $\nabla\cdot\sigma=0$ and $\nabla\cdot\ell\omega=S\sigma$. Thus
\begin{align}
    \|\mathcal{p}\|^2_{H_\ell\mathcal{M}}=\|\sigma\|^2_{H(\nabla\cdot)}+\|\omega\|^2_{H(\nabla\cdot\ell)}=\|\sigma\|^2+\|\omega\|^2+\|S\sigma\|^2\le C\|\mathcal{p}\|^2,
\end{align}
    where the constant $C$ only depends on the bounded operator $S$. From Assumptions \ref{a2.1} and \ref{a2.2b}, we then obtain coercivity:
    \begin{align*}
        (\tilde{\mathcal{A}}\mathcal{p},\mathcal{p})\geq A_\sigma^-\|\sigma\|^2+A_\omega^-\|\omega\|^2\ge \min{(A_\sigma^-,A_\omega^-)} \|\mathcal{p}\|^2 \geq C^{-1} \min{(A_\sigma^-,A_\omega^-)} \|\mathcal{p}\|^2_{H_\ell\mathcal{M}}.
    \end{align*}
  \end{proof}
  
\begin{lemma}[LBB: inf-sup]\label{l4.3} The following inf-sup condition holds, with $\beta$ independent of $\ell$:
    \begin{align}\label{eq4.7}
        \inf_{\mathcal{u}\in L^2\mathcal{V}}\sup_{\mathcal{p}\in H_\ell\mathcal{M}} \frac{\left(\mathcal{S}_\ell\mathcal{p},\mathcal{u}\right)}{\|\mathcal{u}\|\|\mathcal{p}\|_{H_\ell\mathcal{M}}}\geq\beta>0.
    \end{align}
\end{lemma}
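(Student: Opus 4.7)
The plan is to reduce the inf-sup condition to the classical inf-sup condition for the Hellinger--Reissner formulation of linearized elasticity with weakly imposed symmetry. The key observation is that the supremum over $\mathcal{p} = (\sigma, \omega) \in H_\ell \mathcal{M}$ can be restricted to pairs of the form $(\sigma, 0)$. This choice eliminates the second component entirely, removes all $\ell$-dependence from both the numerator and the denominator, and reduces the problem to a statement that only involves the un-weighted operator $\sigma \mapsto (-\nabla\cdot\sigma, S\sigma)$.

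Concretely, given $\mathcal{u} = (u, r) \in L^2\mathcal{V}$, the plan is to invoke the well-known surjectivity result (e.g.\ the inf-sup underlying the Arnold--Falk--Winther elasticity element): there exists $\sigma \in H(\nabla\cdot; \mathbb{M})$ satisfying
\begin{align*}
    -\nabla\cdot\sigma &= u, & S\sigma &= r, & \|\sigma\|_{H(\nabla\cdot)} &\le C\,(\|u\| + \|r\|),
\end{align*}
with a constant $C$ depending only on $\Omega$. Setting $\mathcal{p} = (\sigma, 0)$, the pair trivially lies in $H_\ell\mathcal{M}$ for any admissible $\ell$ (since $\nabla\cdot(\ell \cdot 0) = 0$), and
\begin{align*}
    \|\mathcal{p}\|_{H_\ell\mathcal{M}}^2 = \|\sigma\|^2 + \|\nabla\cdot\sigma\|^2 + 0 + 0 = \|\sigma\|_{H(\nabla\cdot)}^2 \le C^2\|\mathcal{u}\|^2.
\end{align*}
On the other hand, by construction
\begin{align*}
    (\mathcal{S}_\ell \mathcal{p}, \mathcal{u}) = (-\nabla\cdot\sigma, u) + (S\sigma, r) - (\nabla\cdot(\ell\cdot 0), r) = \|u\|^2 + \|r\|^2 = \|\mathcal{u}\|^2.
\end{align*}
Combining these two bounds yields the inf-sup inequality with $\beta = 1/C$, independent of $\ell$.

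The only substantive ingredient is the cited Hellinger--Reissner surjectivity; given that, the argument is essentially a one-line observation. The main conceptual point, and the reason the constant is $\ell$-independent, is that the degenerate block of $\mathcal{S}_\ell$ never enters: by testing with $\omega = 0$ we bypass the coupling that would otherwise degenerate as $\ell \downarrow 0$. The only place where one must be a little careful is in noting that the test pair $(\sigma, 0)$ genuinely lives in the weighted space $H_\ell\mathcal{M}$, and that its $H_\ell\mathcal{M}$-norm coincides with $\|\sigma\|_{H(\nabla\cdot)}$ so that no hidden $\ell^{-1}$ factor enters the final estimate.
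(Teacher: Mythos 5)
Your proposal is correct and follows essentially the same route as the paper: the paper likewise bounds the supremum from below by restricting to the subspace $\omega = 0$, which reduces \eqref{eq4.7} to the classical inf-sup condition for linearized elasticity with weak symmetry and makes the $\ell$-independence immediate. Your version merely makes the reduction explicit by invoking the surjectivity form of that elasticity inf-sup (constructing $\sigma$ with $-\nabla\cdot\sigma = u$, $S\sigma = r$), which is an equivalent formulation of the same cited result.
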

\begin{proof}
    We first spell out the inf-sup statement for $\mathcal{u}=\left(u,r\right)$ and $\mathcal{p}=\left(\sigma,\omega\right)$:
    \begin{align}\label{eq4.8}
        \inf_{\left(u,r\right)\in L^2\mathcal{V}} \sup_{\substack{\sigma\in H(\nabla\cdot;\mathbb{M}) \\ \omega\in H(\nabla\cdot\ell;\mathbb{M})}} \frac{-\left(\nabla\cdot\sigma,u\right)+\left(S\sigma,r\right)-\left(\nabla\cdot\left(\ell\omega\right),r\right)}{(\|u\|^2+\|r\|^2)^{\frac{1}{2}}(\|\sigma\|_{H(\nabla\cdot)}+\|\omega\|_{H(\nabla\cdot\ell)})^{\frac{1}{2}}}.
    \end{align}
    We can obtain a lower bound on expression \eqref{eq4.8} by considering in the supremum the subspace of $H_\ell\mathcal{M}$ where $\omega=0$, i.e. we have the bound
    \begin{align}\label{eq4.9}
        \inf_{\left(u,r\right)\in L^2\mathcal{V}} 
        \sup_{\substack{\sigma\in H(\nabla\cdot;\mathbb{M}) \\ \omega\in H(\nabla\cdot\ell;\mathbb{M})}} 
        &\frac{-\left(\nabla\cdot\sigma,u\right)+\left(S\sigma,r\right)-\left(\nabla\cdot\left(\ell\omega\right),r\right)}{(\|u\|^2+\|r\|^2)^{\frac{1}{2}}(\|\sigma\|_{H(\nabla\cdot)}+\|\omega\|_{H(\nabla\cdot\ell)})^{\frac{1}{2}}} \nonumber\\ 
        &\ge \inf_{\left(u,r\right)\in L^2\mathcal{V}} \sup_{\sigma\in H(\nabla\cdot;\mathbb{M})} \frac{-\left(\nabla\cdot\sigma,u\right)+\left(S\sigma,r\right)}{(\|u\|^2+\|r\|^2)^{\frac{1}{2}}\|\sigma\|_{H(\nabla\cdot)}}.
    \end{align}
    However, the right-hand side is known from the study of equations \eqref{eq2.6} to be bounded from below by a positive constant (see \cite{b9,b11}). Since $\ell$ does not appear in the right-hand side of \eqref{eq4.9}, the lower bound is independent of $\ell$, thus the lemma holds.
\end{proof}

\begin{corollary}[Well-posedness]\label{cor4.4} 
Let the weak mixed variational form of the Cosserat equations \eqref{eq4.4} be stated as follows: For $\mathcal{f}\in L^2\mathcal{V}$, find $\left(\tilde{\mathcal{p}},\mathcal{u}\right)\in H_\ell\mathcal{M}\times L^2\mathcal{V}$ such that: 
    \begin{subequations}\label{eq4.10}
        \begin{align}
            \left(\tilde{\mathcal{A}}\tilde{\mathcal{p}},\mathcal{p}^\prime\right)-\left(\mathcal{u},\mathcal{S}_\ell\mathcal{p}^\prime\right)&=0,\\
            \left(\mathcal{S}_\ell\tilde{\mathcal{p}},\mathcal{u}^\prime\right)&=(\mathcal{f},\mathcal{u}^\prime)\rev{,}
        \end{align}
    \end{subequations}
    for all $\left(\mathcal{p}^\prime,\mathcal{u}^\prime\right)\in H_\ell\mathcal{M}\times L^2\mathcal{V}$. Then equations \eqref{eq4.10} have a unique solution, satisfying
    \begin{align}\label{eq4.11}
        \|\tilde{\mathcal{p}}\|_{H_\ell\mathcal{M}}+\|\mathcal{u}\| \le C\|\mathcal{f}\|\rev{,}
    \end{align}
    \rev{where} the constant $C$ is finite and independent of $\ell$. 
\end{corollary}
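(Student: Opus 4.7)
The plan is to obtain Corollary \ref{cor4.4} as a direct application of the classical Brezzi saddle-point theorem to the mixed system \eqref{eq4.10}. The three structural hypotheses required by Brezzi's theorem have already been established in the preceding lemmas: Lemma \ref{l4.1} supplies the continuity of both bilinear forms $(\tilde{\mathcal{A}}\cdot,\cdot)$ and $(\mathcal{S}_\ell \cdot,\cdot)$ on $H_\ell\mathcal{M}$ and $H_\ell\mathcal{M}\times L^2\mathcal{V}$, respectively; Lemma \ref{l4.2} supplies the coercivity of $(\tilde{\mathcal{A}}\cdot,\cdot)$ on the kernel of $\mathcal{S}_\ell$ inside $H_\ell\mathcal{M}$; and Lemma \ref{l4.3} supplies the inf-sup condition for $\mathcal{S}_\ell$ between $H_\ell\mathcal{M}$ and $L^2\mathcal{V}$. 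Crucially, each of these constants has been shown to be independent of the characteristic length $\ell$, which is precisely what is needed for the asserted uniform bound.

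First, I would invoke the Brezzi theorem (e.g.\ \cite[Thm.~4.2.3]{b11}) to conclude that for every $\mathcal{f}\in L^2\mathcal{V}$ there exists a unique pair $(\tilde{\mathcal{p}},\mathcal{u})\in H_\ell\mathcal{M}\times L^2\mathcal{V}$ solving \eqref{eq4.10}, together with the standard a priori estimate
\begin{align*}
    \|\tilde{\mathcal{p}}\|_{H_\ell\mathcal{M}}+\|\mathcal{u}\|
    \le C\,\bigl(\,\|0\|_{(H_\ell\mathcal{M})'} + \|\mathcal{f}\|_{(L^2\mathcal{V})'}\bigr)
    = C\,\|\mathcal{f}\|,
\end{align*}
where the Brezzi constant $C$ depends only on the continuity constants from Lemma \ref{l4.1}, the coercivity constant from Lemma \ref{l4.2}, and the inf-sup constant $\beta$ from Lemma \ref{l4.3}. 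Since all three are independent of $\ell$, so is $C$, which yields \eqref{eq4.11}.

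Since the bulk of the work has been done in the three preceding lemmas, the proof is essentially a one-line citation. The only point worth being explicit about is the verification that the first equation in \eqref{eq4.10} has zero right-hand side, so the Brezzi estimate simplifies to a bound in terms of $\|\mathcal{f}\|$ alone. There is no substantive obstacle: the genuine difficulty (obtaining an $\ell$-uniform inf-sup by reducing to the classical weakly-symmetric Hellinger–Reissner inf-sup via the test-subspace $\omega=0$) has already been absorbed into Lemma \ref{l4.3}, which is the real engine behind the corollary. The rest is bookkeeping.
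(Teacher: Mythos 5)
Your proposal is correct and matches the paper's proof essentially verbatim: both invoke Lemmas \ref{l4.1}, \ref{l4.2}, and \ref{l4.3} and then apply the standard Brezzi theory of \cite[Thm.~4.2.3]{b11}, noting that the resulting constant depends only on the ($\ell$-independent) constants from those lemmas. Your added remark about the zero right-hand side in the first equation is a harmless piece of bookkeeping the paper leaves implicit.
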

\begin{proof}
    As a consequence of Lemmas \ref{l4.1}, \ref{l4.2}, and \ref{l4.3}, standard theory applies (see e.g.~\cite[Thm.~4.2.3]{b11}), and the constant $C$ only depends on the constants in the cited lemmas, which are independent of $\ell$.
\end{proof}

\subsection{Weakly coupled mixed finite element approximation}\label{sec4.3}
Our objective is now to obtain a finite-dimensional approximation to equations \eqref{eq4.10}. That is to say, for a family of finite-dimensional spaces  $X_h^\mathcal{p}\subset H(\nabla\cdot;\mathcal{M})$ and $X_h^\mathcal{u}\subset L^2\mathcal{V}$, both indexed by $h$, we approximate equations \eqref{eq4.10} by the following finite-dimensional system:  For $\mathcal{f}\in L^2\mathcal{V}$, find $\left({\tilde{\mathcal{p}}}_h,\mathcal{u}_h\right)\in X_h^\mathcal{p}\times X_h^\mathcal{u}$ such that: 
\begin{subequations}\label{eq4.12}
    \begin{align}
        \left(\tilde{\mathcal{A}}{\tilde{\mathcal{p}}}_h,\mathcal{p}_h^\prime\right)-\left(\mathcal{u}_h,\mathcal{S}_\ell\mathcal{p}_h^\prime\right)&=0	 
        & \forall \mathcal{p}_h^\prime &\in X_h^\mathcal{p}, \\
        \left(\mathcal{S}_\ell{\tilde{\mathcal{p}}}_h,\mathcal{u}_h^\prime\right)&=(\mathcal{f},\mathcal{u}_h^\prime)
        & \forall \mathcal{u}_h^\prime &\in X_h^\mathcal{u}.
    \end{align}
\end{subequations}

\rev{In this section, we will} assume that the mesh size $h$ is sufficiently small such that the triangulation $\Omega_h$ resolves $\Theta_H$ defined in Assumption \ref{a2.2b}. We then consider a pair of \emph{weakly coupled spaces}, in the sense that we will not require Definition \ref{def3.6} to hold. In particular, we will allow the range of $\mathcal{S}_\ell$ to be outside $X_h^\mathcal{u}$. Nevertheless, we will still consider finite-dimensional subspaces of $H(\nabla\cdot;\mathcal{M})$ and $L^2\mathcal{V}$, thus the discretization is conforming in the sense of function spaces. 

In the proof of Lemma \ref{l4.3} we exploited the relationship between the Cosserat equations and those of linearized elasticity, in the sense that despite the somewhat different structure, they share the same inf-sup constant. This observation motivates the following lemma: 

\begin{lemma}[Stability from elasticity]\label{l4.4} 
Let a quadruplet of finite-dimensional spaces $X_h^i,$ for $i\in\left\{\sigma, \omega, u, r\right\}$, be such that: 
   \begin{align}\label{eq4.13}
       X_h^\sigma &\subset H(\nabla\cdot;\mathbb{M}),  &
       X_h^\omega &\subset H(\nabla\cdot\ell;\mathbb{M}), &
       X_h^u &\subset L^2\mathbb{V}, &
       X_h^r &\subset L^2\mathbb{V},
   \end{align} 
   and let the spaces $X_h^\sigma,$ $X_h^u$ and $X_h^r$ satisfy the discrete LBB condition for linearized elasticity with weak symmetry: 
   \begin{align}\label{eq4.14}
       \inf_{\left(u,r\right)\in X_h^u\times X_h^r} \sup_{\sigma\in X_h^\sigma} \frac{-\left(\nabla\cdot\sigma,u\right)+\left(S\sigma,r\right)}{(\|u\|^2+\|r\|^2)^{\frac{1}{2}}\|\sigma\|_{H(\nabla\cdot)}} \geq\beta_h\geq\beta_0>0.
   \end{align}
   Then the following LBB condition holds for $X_h^\mathcal{u}\coloneqq X_h^\sigma\times X_h^\omega$ and $X_h^\mathcal{p} \coloneqq X_h^u\times X_h^r$:
   \begin{align}\label{eq4.15}
       \inf_{\mathcal{u}\in X_h^\mathcal{u}} \sup_{\mathcal{p}\in X_h^\mathcal{p}} \frac{\left(\mathcal{S}_\ell\mathcal{p},\mathcal{u}\right)}{\|\mathcal{u}\|\|\mathcal{p}\|_{H_\ell\mathcal{M}}}\geq\beta_0>0,
   \end{align}
   with $\beta_0$ independent of $\ell$.
\end{lemma}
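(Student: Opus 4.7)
The plan is to mimic at the discrete level the reduction already used in the proof of Lemma~\ref{l4.3}: since the supremum in \eqref{eq4.15} runs over a product space, restricting the test functions to a subspace gives a lower bound, and the natural choice is to restrict the couple-stress component to zero. This annihilates the $\ell$-dependent column of $\mathcal{S}_\ell$ and the $\ell$-weighted piece of the norm, and reduces the problem to the discrete elasticity inf-sup \eqref{eq4.14}.

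Concretely, I would fix an arbitrary $(u,r)$ in the outer infimum and, in the inner supremum over pairs $(\sigma,\omega)\in X_h^\sigma\times X_h^\omega$, restrict to pairs of the form $(\sigma,0)$. For such a pair, $\mathcal{S}_\ell(\sigma,0)=(-\nabla\cdot\sigma,\,S\sigma)$, so the numerator collapses to $-(\nabla\cdot\sigma,u)+(S\sigma,r)$, while the denominator reduces to $\|(\sigma,0)\|_{H_\ell\mathcal{M}}=\|\sigma\|_{H(\nabla\cdot)}$, both independent of $\ell$. Invoking the assumed discrete elasticity-with-weak-symmetry inf-sup \eqref{eq4.14} then bounds the restricted supremum from below by $\beta_h(\|u\|^2+\|r\|^2)^{1/2}\ge\beta_0\|\mathcal{u}\|$, and taking the infimum over $\mathcal{u}$ delivers \eqref{eq4.15} with $\beta_0$ independent of $\ell$, as required.

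There is essentially no obstacle to this argument: it works precisely because every $\ell$-dependence in the bilinear form and in the norm lives on the couple-stress component, and is therefore switched off by the $\omega=0$ restriction. The only substantive ingredient is the existence of element families $X_h^\sigma, X_h^u, X_h^r$ satisfying \eqref{eq4.14} uniformly in $h$; this is the content of the hypothesis, and many standard mixed finite elements for elasticity with weakly imposed symmetry are known to meet it, so the lemma will subsequently allow us to lift any such elasticity triple to a robust Cosserat discretization by an essentially free choice of $X_h^\omega \subset H(\nabla\cdot\ell;\mathbb{M})$.
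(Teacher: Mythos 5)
Your argument is correct and is precisely the paper's proof: the paper proves Lemma~\ref{l4.4} by noting that ``the same arguments as in the proof of Lemma~\ref{l4.3} hold,'' and that proof is exactly your restriction of the supremum to pairs $(\sigma,0)$, which kills every $\ell$-dependent term in both the bilinear form and the norm and reduces the bound to the assumed discrete elasticity inf-sup \eqref{eq4.14}. No gaps.
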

\begin{proof}
    The same arguments as in the proof of Lemma \ref{l4.3} hold. 
\end{proof}

The significance of Lemma \ref{l4.4} is that any mixed finite element method for elasticity with weakly imposed symmetry will be applicable as a building block for a weakly coupled mixed finite element method for the Cosserat equations. The literature on these methods is reviewed in detail in \cite[Chapter 9]{b11}. Here, we will for concreteness consider the triplet proposed in \cite{b9}, consisting of $BDM$ elements for the stress, and equal-order discontinous Galerkin elements for displacement and rotation.

\begin{theorem}[Weakly coupled MFE for Cosserat equations]\label{th4.5} 
For a family of simplicial partitions $\Omega_h$ of the domain $\Omega$, and for a non-negative integer $k$, define  
\begin{subequations}
    \begin{align}
        X_{h,k}^\mathcal{p} &\coloneqq {BDM}_{k+1}\left(\Omega_h\right)^3\times W_k\left(\Omega_h\right)^3         
        &\textrm{with }W_k &\in \left\{BDM_{k+1},\ RT_k\right\}\rev{,}\\
        X_{h,k}^\mathcal{u} &\coloneqq P_{-k}\left(\Omega_h\right)^3\times P_{-k}\left(\Omega_h\right)^3,
    \end{align}
\end{subequations}    
and consider these spaces together with problem \eqref{eq4.12}. Then the following statements hold:
    \begin{enumerate}[label=\arabic*)]
        \item The stated choice of $X_h^\sigma, X_h^u$ and $X_h^r$ satisfies the assumptions of Lemma \ref{l4.4}.
        \item The resulting approximation is stable and convergent, independent of $\ell$, satisfying
        \begin{align*}
            \|\sigma-\sigma_h\|_{H(\nabla\cdot)}+ &\|\tilde{\omega}-{\tilde{\omega}}_h\|_{H(\nabla\cdot\ell)}+\|u-u_h\|+\|r-r_h\|  \\
            \le C&\left(\inf_{\sigma^\prime\in X_h^\sigma}\|\sigma-\tau\|_{H(\nabla\cdot)}  +\inf_{{\tilde{\omega}}^\prime\in X_h^\omega} \|\tilde{\omega}-{\tilde{\omega}}^\prime\|_{H(\nabla\cdot\ell)}\right. \\ 
            &\left. +\inf_{u^\prime\in X_h^u} \|u-u^\prime\|  +\inf_{r^\prime\in X_h^r} \|r-r^\prime \|\right).
        \end{align*}
        \item If the solution of \eqref{eq4.10} has sufficient regularity, then the following optimal \emph{a priori} convergence rates hold for $\mathcal{u}=\left(u,r\right)$ and $\mathcal{p}=\left(\sigma,\tilde{\omega}\right)$:
        \begin{subequations}\label{eq4.17}
            \begin{align}
                \|\sigma-\sigma_h\|_{H(\nabla\cdot)}+\|\tilde{\omega}-{\tilde{\omega}}_h\|_{H(\nabla\cdot\ell)}+\|u-u_h\|+\|r-r_h\| =\mathcal{O}\left(h^{k+1}\right). \label{eq4.17a}
            \end{align}
            In terms of non-scaled variables, this translates to:
            \begin{align}
                \|\sigma-\sigma_h\|_{H(\nabla\cdot)}+\|\ell^{-1}\left(\omega-\omega_h\right)\|+\|\nabla\cdot\left(\omega-\omega_h\right)\| & \nonumber \\ 
                +\|u-u_h\| +\|r-r_h\| &= \mathcal{O}\left(h^{k+1}\right). \label{eq4.17b}
            \end{align}
        \end{subequations}
    \end{enumerate}
\end{theorem}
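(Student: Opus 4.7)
The proof has three parts, which I would handle in sequence. For Part 1, I would cite the Arnold--Falk--Winther analysis in \cite{b9}: the triplet $(X_h^\sigma, X_h^u, X_h^r) = (BDM_{k+1}(\Omega_h)^3, P_{-k}(\Omega_h)^3, P_{-k}(\Omega_h)^3)$ is precisely the family of weak-symmetry elasticity elements proven there to satisfy the discrete elasticity inf-sup condition \eqref{eq4.14} with a constant $\beta_h \geq \beta_0 > 0$ independent of $h$. Lemma \ref{l4.4} then immediately yields the discrete inf-sup condition \eqref{eq4.15} for the block pair $(X_{h,k}^\mathcal{p}, X_{h,k}^\mathcal{u})$, with constant independent of $\ell$.

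For Part 2, I would note that the conformity $X_{h,k}^\mathcal{p} \subset H_\ell\mathcal{M}$ and $X_{h,k}^\mathcal{u} \subset L^2\mathcal{V}$ means the continuity estimate of Lemma \ref{l4.1} and the kernel-coercivity of Lemma \ref{l4.2} transfer verbatim to the discrete spaces. Combined with the discrete inf-sup from Part 1, Brezzi's theorem for saddle-point problems (as applied e.g.\ in \cite[Thm.~5.2.2]{b11}) delivers both unique solvability of \eqref{eq4.12} and the quasi-optimal error estimate stated in item 2, with all constants independent of $\ell$ because the continuity, coercivity, and inf-sup constants feeding into the theorem are all $\ell$-independent.

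For Part 3, the rate \eqref{eq4.17a} follows by inserting standard canonical interpolants into the Céa-type bound from Part 2. The $BDM_{k+1}$ and $RT_k$ interpolants each give order $k+1$ in $H(\nabla\cdot;\mathbb{M})$ (see \cite[Prop.~2.5.4]{b11}), and the $L^2$-projection onto $P_{-k}$ gives order $k+1$ in $L^2$. The one nonstandard point is estimating $\|\nabla\cdot(\ell(\tilde\omega - \pi_h\tilde\omega))\|$. Since the mesh resolves the partition $\Theta_H$ on which $\ell$ is piecewise $P_1$, we may expand elementwise
\begin{equation*}
  \nabla\cdot\bigl(\ell(\tilde\omega - \pi_h\tilde\omega)\bigr) = \ell\,\nabla\cdot(\tilde\omega - \pi_h\tilde\omega) + \nabla\ell \cdot (\tilde\omega - \pi_h\tilde\omega),
\end{equation*}
and then use $0 \leq \ell \leq 1$ together with $|\nabla\ell|\leq C_\ell$ from Assumption \ref{a2.2b} to bound both terms by standard $L^2$ and $H(\nabla\cdot)$ interpolation errors, each of order $h^{k+1}$. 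The translation \eqref{eq4.17b} is then a direct substitution: since $\omega = \ell\tilde\omega$ vanishes wherever $\ell = 0$, the quantity $\ell^{-1}(\omega - \omega_h)$ is interpreted as $\tilde\omega - \tilde\omega_h$ where $\ell > 0$ and contributes nothing where $\ell = 0$, while $\nabla\cdot(\omega-\omega_h) = \nabla\cdot(\ell(\tilde\omega - \tilde\omega_h))$ is controlled by the $H(\nabla\cdot\ell)$ rate in \eqref{eq4.17a}.

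The main obstacle I anticipate is the careful treatment of the weighted-divergence approximation just described: ensuring that the product-rule decomposition is well defined across the interior skeleton of $\Theta_H$ (hence the assumption that $\Omega_h$ resolves $\Theta_H$) and that the resulting bound carries through without picking up $\ell$-dependence. All other steps are essentially bookkeeping on top of the already-established continuous-level stability of Section \ref{sec4.2}.
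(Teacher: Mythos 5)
Your Parts 1 and 3 match the paper's argument: Part 1 is handled by citing the weak-symmetry elasticity elements (the paper cites \cite[Theorem 11.4]{b16} for the triplet of \cite{b9}, but the content is the same), and your product-rule bound $\|\nabla\cdot(\ell\tilde\omega)\| \le \|\ell\,\nabla\cdot\tilde\omega\| + \|(\nabla\ell)\cdot\tilde\omega\| \le C\|\tilde\omega\|_{H(\nabla\cdot)}$ is exactly how the paper inherits approximation properties in the weighted norm.

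There is, however, a genuine gap in Part 2: your claim that ``the kernel-coercivity of Lemma \ref{l4.2} transfers verbatim to the discrete spaces'' is false, and closing this gap is the main technical content of the paper's proof. Lemma \ref{l4.2} proves coercivity on the \emph{continuous} kernel of $\mathcal{S}_\ell$, where $\mathcal{S}_\ell\mathcal{p}=0$ gives the pointwise identities $\nabla\cdot\sigma=0$ and $\nabla\cdot(\ell\omega)=S\sigma$, which immediately control $\|\mathcal{p}\|_{H_\ell\mathcal{M}}$ by $\|\mathcal{p}\|$. For the weakly coupled pair this does not carry over, because the spaces are precisely \emph{not} strongly coupled: $\mathcal{S}_\ell X_{h,k}^\mathcal{p} \not\subseteq X_{h,k}^\mathcal{u}$ (e.g.\ $S\sigma-\nabla\cdot(\ell\omega)$ lies in $P_{-(k+1)}(\Omega_h)^3$ but is only tested against $P_{-k}(\Omega_h)^3$), so the discrete kernel is strictly larger than the continuous kernel intersected with $X_{h,k}^\mathcal{p}$, and an element of it satisfies $\nabla\cdot\sigma=0$ but only a weak, non-pointwise version of the angular momentum relation. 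Consequently $\|\nabla\cdot(\ell\omega)\|$ is not controlled for free, and without that control $\tilde{\mathcal{A}}$ is not coercive in the $H_\ell\mathcal{M}$-norm on the discrete kernel, which is the norm in which continuity and the inf-sup condition are stated. The paper devotes Lemma \ref{l4.6} to this: one tests with $\mathcal{u}'=(0,\ell_h\nabla\cdot\omega)$, where $\ell_h\in P_0(\Omega_h)$ is the elementwise maximum of $\ell$, and then needs the nontrivial equivalence $\|\ell_h u\|\le C_k\|\sqrt{\ell\ell_h}\,u\|$ for piecewise polynomials (Lemma \ref{l4.7}, proved via a mean-value inequality for nonnegative linear functions and the $P_1$ mass matrix, yielding $C_0=2$ and $C_k=\sqrt5$) to conclude $\|\ell_h\nabla\cdot\omega\|\le C_k^2\|S\sigma+\omega\nabla\ell\|$ and hence $\|\mathcal{p}\|_{H_\ell\mathcal{M}}^2\le C\|\mathcal{p}\|^2$ on the discrete kernel with $C$ depending only on $C_\ell$ and $\|S\|$. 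Your proposal contains no substitute for this step, so as written the $\ell$-independent stability in item 2 is not established.
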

\begin{proof}
    1) This is exactly \cite[Theorem 11.4]{b16}.

    2) We need in addition to the inf-sup condition from Lemma \ref{l4.4}, the coercivity and continuity of $(\tilde{\mathcal{A}}\mathcal{p},\mathcal{r})$ for this choice of spaces. This is established in Lemma \ref{l4.6} below, and thus again we can invoke \cite[Thm.~4.2.3 and 5.2.2]{b11}, which ensures stability and the approximation property \cite{b9}.

    3) This follows from 2), together with the approximation properties of the spaces \cite{b11}. We only need to show that the approximation properties of $X_h^\omega$ hold in the weighted norm $\| \cdot \|_{H(\nabla\cdot\ell)}$. However, a product rule and Assumption \ref{a2.2b} imply that this norm is weaker:
    \begin{align}
        \|\nabla \cdot \ell \tilde{\omega}\|
        \le \| \ell \nabla \cdot \tilde{\omega}\| + \|(\nabla \ell) \cdot  \tilde{\omega}\|
        \le C \| \tilde{\omega}\|_{H(\nabla \cdot)}.
    \end{align}
    In turn, the approximation properties of $X_h^\omega \subset H(\nabla\cdot; \mathbb{M})$ are directly inherited.
\end{proof}

The choice of weakly coupled spaces implies that the coercivity of $\tilde{\mathcal{A}}$ on the kernel of $\mathcal{S}_\ell$ is not an immediate consequence of Lemma \ref{l4.2}. We therefore show this in a separate lemma. For its proof, we require the following preliminary estimate.

\begin{lemma}\label{l4.7}
    Let $k\geq0$, $\ell$ satisfy Assumption \ref{a2.2b}, and let $\ell_h \in P_0(\Omega_h)$ be the maximum value of $\ell$ on each simplex $\Delta \in \Omega_h$. Then a constant $C_k$ exists such that:
    \begin{align}\label{eq4.24}
        \sup_{u\in P_{-k}\left(\Omega_h\right)^3} \frac{\|\ell_hu\|}{\|\sqrt{\ell\ell_h}u\|}\le C_k<\infty.
    \end{align}
    In particular, we have $C_0=2$ and $C_k=\sqrt5$ for $k\geq1$.
\end{lemma}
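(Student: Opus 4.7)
The plan is to localize the estimate to a single simplex, exploit the affine structure of $\ell$ there, and finally reduce the problem to a finite-dimensional computation on a reference simplex.

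First, because $\ell_h$ is piecewise constant on $\Omega_h$ and functions in $P_{-k}(\Omega_h)^3$ carry no continuity constraint across element interfaces, both squared norms split as sums over simplices and the supremum is attained element by element. It is therefore enough to bound, uniformly in $\Delta \in \Omega_h$,
\[
\frac{\ell_{h,\Delta} \int_\Delta |u|^2\,dV}{\int_\Delta \ell|u|^2\,dV}, \qquad u \in P_{-k}(\Delta)^3.
\]

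Next, since $\Omega_h$ resolves the partition $\Theta_H$ of Assumption \ref{a2.2b}, the restriction $\ell|_\Delta$ is affine and nonnegative, with maximum $\ell_{h,\Delta}$ attained at some vertex. Expanding $\ell|_\Delta = \sum_{i=0}^3 \ell_i \lambda_i$ in barycentric coordinates and letting $\lambda_*$ denote the barycentric coordinate of a maximizing vertex, the inequalities $0 \le \ell_i \le \ell_{h,\Delta}$ together with $\ell_* = \ell_{h,\Delta}$ yield the pointwise lower bound $\ell \ge \ell_{h,\Delta}\,\lambda_*$ on $\Delta$. Substituting this into the denominator cancels the factor of $\ell_{h,\Delta}$ and, by an affine pullback to the reference simplex $\hat\Delta$, reduces the claim to the $\ell$-independent statement
\[
\sup_{u \in P_{-k}(\hat\Delta)^3} \frac{\int_{\hat\Delta} |u|^2\,dV}{\int_{\hat\Delta} \lambda_* |u|^2\,dV} \;\le\; C_k^2.
\]

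For $k=0$, $|u|^2$ is constant on $\hat\Delta$, so the supremum equals $|\hat\Delta|/\int_{\hat\Delta}\lambda_* = 4$, giving $C_0 = 2$. For $k\ge 1$, the supremum is the largest generalized eigenvalue of the pencil of Gram matrices associated to the bilinear forms $(u,v)\mapsto \int_{\hat\Delta} u\cdot v$ and $(u,v)\mapsto \int_{\hat\Delta} \lambda_* u\cdot v$ on a monomial basis of $P_{-k}(\hat\Delta)$. All entries are explicitly computable from
\[
\int_{\hat\Delta} \lambda_0^{a_0}\lambda_1^{a_1}\lambda_2^{a_2}\lambda_3^{a_3}\,dV = 6|\hat\Delta|\,\frac{a_0!\,a_1!\,a_2!\,a_3!}{(3+a_0+a_1+a_2+a_3)!}.
\]
The main obstacle will be establishing that this largest generalized eigenvalue stabilizes at the claimed value for every $k\ge 1$, rather than growing with the polynomial degree. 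I would handle this by using the $S_3$-symmetry that permutes the three vertices on which $\lambda_* = 0$ to block-diagonalize the pencil into isotypic components, and then within each component either deriving a degree-uniform bound by a recursion in $k$ or, equivalently, by expanding with respect to an orthogonal-polynomial basis in $\mu = \lambda_*$ against the induced one-dimensional weight $3(1-\mu)^2$ on $[0,1]$ and reading off the spectrum directly. The remaining steps are affine-invariance computations and are routine.
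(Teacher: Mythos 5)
Your localization to a single simplex, the pointwise bound $\ell \ge \ell_{h,\Delta}\lambda_*$, and the reduction to a generalized eigenvalue problem on the reference element are all correct, and your $k=0$ computation giving $C_0=2$ is right. Moreover, since $u\mapsto\int_{\hat\Delta}\lambda_*|u|^2\,dV$ is positive definite on the finite-dimensional space $P_{-k}(\hat\Delta)^3$, your reduction does immediately deliver \emph{some} finite constant $C_k$, independent of $\ell$ and $h$, for each fixed $k$ — which is all that the downstream application in Lemma~\ref{l4.6} actually needs.

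The gap is exactly the step you flag as the main obstacle, and it cannot be closed: the top generalized eigenvalue does \emph{not} stabilize at $5$. Taking $u=(1-\lambda_*)^k\bm{e}_1$ and using the one-dimensional reduction with weight $3(1-t)^2$ that you propose,
\[
\frac{\int_{\hat\Delta}(1-\lambda_*)^{2k}\,dV}{\int_{\hat\Delta}\lambda_*(1-\lambda_*)^{2k}\,dV}
=\frac{\int_0^1(1-t)^{2k+2}\,dt}{\int_0^1 t\,(1-t)^{2k+2}\,dt}=2k+4,
\]
which equals $6$ already at $k=1$ and grows linearly in $k$; no symmetry or orthogonal-polynomial argument can bound an unbounded quantity by $5$. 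The information lost in the substitution $\ell\ge\ell_{h,\Delta}\lambda_*$ is the projection structure that the paper's proof is built on: because $\ell_h^2$ is piecewise constant and $\ell\ell_h$ is piecewise linear, the numerator $\int\ell_h^2|u|^2$ only sees $\varpi_0(|u|^2)$ and the denominator $\int\ell\ell_h|u|^2$ only sees $\varpi_1(|u|^2)$, which collapses the degree dependence to a problem posed over $P_{-1}$ — that is the idea your route is missing. You should be aware, however, that when $\ell=\lambda_*$ on a simplex (i.e., $\ell$ vanishes at three of its vertices, which is admissible under Assumption~\ref{a2.2b}) your chain of inequalities is an equality, so the display above is in fact a counterexample to the stated value $C_k=\sqrt5$ for $k\ge1$: the sharp constant necessarily grows with $k$. (Correspondingly, the paper's passage in \eqref{eq4.28} from $\phi=\varpi_1(|u|^2)$ to arbitrary nonnegative $\phi\in P_{-1}$ is not an upper bound, since $\varpi_1(|u|^2)$ can be negative near a vertex.) The robust conclusion, which your argument does establish, is a finite $C_k$ depending on $k$ but not on $\ell$ or $h$.
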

\begin{proof}
    To investigate this supremum, we take its square and rewrite as follows:
    \begin{align}\label{eq4.25}
       \sup_{u\in P_{-k}\left(\Omega_h\right)^3} \frac{\|\ell_hu\|^2}{\|\sqrt{\ell\ell_h}u\|^2} =\sup_{u\in P_{-k}\left(\Omega_h\right)^3} \frac{\int_\Omega \ell^2_h|u|^2 dV}{\int_\Omega {\ell\ell_h}|u|^2 dV}. 
    \end{align}
    Recall now that by Assumption \ref{a2.2b}, $\ell\in P_1\left(\Theta_H\right)$, and that $h$ is sufficiently small such that the triangulation $\Omega_h$ resolves $\Theta_H$. In turn, $\ell\in P_1\left(\Omega_h\right)$. Therefore, on a single 3-simplex $\Delta\in\Omega_h, \ell$ is a non-negative, linear function and the following mean value theorem applies
    \begin{align}\label{eq4.26}
        \frac{1}{4}\int_{\Delta}{\ell_hdV}\le\int_{\Delta}\ell dV.
    \end{align}
    Let $\varpi_k$ denote the $L^2$ projection onto $P_{-k}\left(\Omega_h\right)$. Using \eqref{eq4.26} with the fact that $\ell_h\in P_0\left(\Omega_h\right)$ and $\ell\in P_1\left(\Omega_h\right)$, we obtain
    \begin{align}\label{eq4.27}
        \frac{\int_\Omega \ell^2_h|u|^2 dV}{\int_\Omega {\ell\ell_h}|u|^2 dV} =\frac{\int_\Omega\ell_h^2\varpi_0(\left|u\right|^2)\ dV}{\int_\Omega \ell\ell_h\varpi_1(\left|u\right|^2)\ dV} \le 4 \frac{\int_{\Omega}{\ell\ell_h\varpi_0(\left|u\right|^2)\ dV}}{\int_{\Omega}{\ell\ell_h\varpi_1(\left|u\right|^2)\ dV}}.
    \end{align}
    
    For the case $k=0$, we have that $u$ is piecewise constant and thus $\varpi_1(\left|u\right|^2)=\varpi_0(\left|u\right|^2)=\left|u\right|^2$. Counteracting the square taken at the beginning of the proof, the combination of \eqref{eq4.25} and \eqref{eq4.27} gives us $C_0=\sqrt4=2$.

    We continue with $k\geq1$. Due to the projections $\varpi_k$, the supremum over $u\in P_{-k}\left(\Omega_h\right)^3$ can be rewritten as a supremum over $\phi\in P_{-1}\left(\Omega_h\right)$ with $\phi\geq0$:
    \begin{align}\label{eq4.28}
        \sup_{u\in P_{-k}\left(\Omega_h\right)} \frac{\int_{\Omega}{\ell\ell_h\varpi_0(\left|u\right|^2)\ dV}}{\int_{\Omega}{\ell\ell_h\varpi_1(\left|u\right|^2)\ dV}} 
        &= \sup_{\substack{\phi\in P_{-1}\left(\Omega_h\right) \\ \phi\geq0}} \frac{\int_{\Omega}{\ell\ell_h\varpi_0(\phi)\ dV}}{\int_{\Omega}{\ell\ell_h\phi\ dV}} \nonumber \\ 
        &= \sup_{\substack{\phi\in P_{-1}\left(\Omega_h\right) \\ \phi\geq0}} \frac{\int_{\Omega}{\varpi_0(\ell)\ell_h\varpi_0(\phi)\ dV}}{\int_{\Omega}{\ell\ell_h\phi\ dV}}.
    \end{align}
    Next, we consider a single simplex $\Delta\in\Omega_h$ and let $\ell_i$ denote the value of $\ell$ at vertex $v_i\in\Delta$. Using the fact that both $\ell$ and $\phi$ are nonnegative, linear functions, we use the definition of the local mass matrix of $P_1\left(\Delta\right)$ in 3D with exact integration (see e.g. \cite[Thm. 6.3.2]{van2023numerical}) to compute:
    \begin{align}\label{eq4.29}
        \int_{\mathrm{\Delta}}{\varpi_0(\ell)\varpi_0(\phi)\ dV}
        =\left|\mathrm{\Delta}\right|\frac{\sum_{i}\ell_i}{4}\frac{\sum_{i}\phi_i}{4} 
        &\le\frac{5}{4}\left(\left|\mathrm{\Delta}\right|\frac{\sum_{i}\ell_i\sum_{i}\phi_i+\sum_{i}{\ell_i\phi_i}}{20}\right) \nonumber\\ 
        &=\frac{5}{4}\int_{\mathrm{\Delta}}{\ell\phi\ dV}.
    \end{align}
    Scaling both sides by $\ell_h$ and summing over all $\Delta$, we combine \eqref{eq4.29} with \eqref{eq4.25}, \eqref{eq4.27}, and \eqref{eq4.28} to obtain the upper bound:
    \begin{align}
        \sup_{u\in P_{-k}\left(\Omega_h\right)^3} \frac{\|\ell_hu\|^2}{\|\sqrt{\ell\ell_h}u\|^2}\le 4 \sup_{\substack{\phi\in P_{-1}\left(\Omega_h\right) \\ \phi\geq0}} \frac{\int_{\Omega}{\varpi_0(\ell)\ell_h\varpi_0(\phi)\ dV}}{\int_{\Omega}{\ell\ell_h\phi\ dV}}\le4\left(\frac{5}{4}\right)=C_k^2.
    \end{align}
    In turn, we obtain $C_k=\sqrt5$ for $k\geq1$.
\end{proof}

The estimate from Lemma~\ref{l4.7} allows us to prove the following result.

\begin{lemma}[Discrete coercivity]\label{l4.6} 
With the choice of finite-dimensional subspaces $X_{h,k}^\mathcal{p}\subset H_\ell\mathcal{M}$ and $X_{h,k}^\mathcal{u}\subset L^2\mathcal{V}$ given in Theorem~\ref{th4.5}, the bilinear form $\tilde{\mathcal{A}}$ is coercive on the null-space $\mathcal{S}_\ell$ on $X_{h,k}^\mathcal{u}$, with coercivity constant independent of $\ell$.
\end{lemma}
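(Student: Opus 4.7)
Fix $\mathcal{p} = (\sigma,\omega) \in X_{h,k}^\mathcal{p}$ in the discrete null-space of $\mathcal{S}_\ell$, so that $(\mathcal{S}_\ell\mathcal{p},\mathcal{u}')=0$ for every $\mathcal{u}' = (u',r') \in X_{h,k}^\mathcal{u}$. The plan is to translate this null-space condition into two consequences, combine them with a base $L^2$ coercivity estimate for $\tilde{\mathcal{A}}$, and then extract the missing control on $\|\nabla\cdot(\ell\omega)\|$ via Lemma~\ref{l4.7}, so as to recover the full $H_\ell\mathcal{M}$ norm.

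The first consequence is that $\nabla\cdot\sigma = 0$ pointwise: since $\sigma \in BDM_{k+1}(\Omega_h)^3$ has $\nabla\cdot\sigma \in P_{-k}(\Omega_h)^3 = X_h^u$, testing with $u' = \nabla\cdot\sigma$ gives $\|\nabla\cdot\sigma\|^2 = 0$. The second is the orthogonality $(S\sigma - \nabla\cdot(\ell\omega), r') = 0$ for all $r' \in P_{-k}(\Omega_h)^3$. Assumptions~\ref{a2.1} and~\ref{a2.2b} provide the baseline lower bound $(\tilde{\mathcal{A}}\mathcal{p},\mathcal{p}) \ge \min(A_\sigma^-, A_\omega^-)(\|\sigma\|^2 + \|\omega\|^2)$, so the remaining task is to prove $\|\nabla\cdot(\ell\omega)\| \lesssim \|\sigma\|+\|\omega\|$ uniformly in $\ell$.

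For this bound, expand $\nabla\cdot(\ell\omega) = \ell\nabla\cdot\omega + \omega\nabla\ell$ and dispatch the harmless piece via $\|\omega\nabla\ell\| \le C_\ell\|\omega\|$ from Assumption~\ref{a2.2b}. The delicate term is $\ell\nabla\cdot\omega$. Observe that $\ell_h\nabla\cdot\omega \in P_{-k}(\Omega_h)^3 = X_h^r$, since $\ell_h$ is piecewise constant and $\nabla\cdot\omega$ is piecewise polynomial of degree at most $k$ (in both choices $W_k = BDM_{k+1}$ and $W_k = RT_k$). Testing the orthogonality above against $r' = \ell_h\nabla\cdot\omega$ and using the product rule yields
\begin{align*}
  \|\sqrt{\ell\ell_h}\,\nabla\cdot\omega\|^2 \;=\; (S\sigma - \omega\nabla\ell,\ \ell_h\nabla\cdot\omega) \;\le\; (\|S\sigma\| + C_\ell\|\omega\|)\,\|\ell_h\nabla\cdot\omega\|.
\end{align*}
Applying Lemma~\ref{l4.7} to $u = \nabla\cdot\omega \in P_{-k}(\Omega_h)^3$ gives $\|\ell_h\nabla\cdot\omega\| \le C_k\|\sqrt{\ell\ell_h}\,\nabla\cdot\omega\|$. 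Dividing out one factor of this quantity yields $\|\sqrt{\ell\ell_h}\,\nabla\cdot\omega\| \le C_k(\|S\sigma\| + C_\ell\|\omega\|)$, and since $\ell \le \ell_h$ pointwise we conclude $\|\ell\nabla\cdot\omega\| \le \|\ell_h\nabla\cdot\omega\| \lesssim \|\sigma\| + \|\omega\|$.

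Collecting $\nabla\cdot\sigma = 0$, the base coercivity of $\tilde{\mathcal{A}}$, and the just-derived bound on $\|\nabla\cdot(\ell\omega)\|$ gives coercivity of $\tilde{\mathcal{A}}$ on the discrete null-space of $\mathcal{S}_\ell$ in the full $H_\ell\mathcal{M}$ norm, with a constant depending only on $A_\sigma^-$, $A_\omega^-$, $\|S\|$, $C_\ell$ and the constant $C_k$ of Lemma~\ref{l4.7}—none of which depend on $\ell$. The main obstacle in this proof is precisely the control of $\|\ell\nabla\cdot\omega\|$: a naive element-wise inverse inequality on $\nabla\cdot\omega$ would introduce an uncontrolled $h^{-1}$ factor, so the kernel condition must be exploited in a way that is compatible with the possibly degenerate weight $\ell$. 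The choice $r' = \ell_h\nabla\cdot\omega$ is what converts the orthogonality into a closed-form identity for $\|\sqrt{\ell\ell_h}\,\nabla\cdot\omega\|^2$, and Lemma~\ref{l4.7} is the ingredient that promotes this partially degenerate weighted norm to $\|\ell_h\nabla\cdot\omega\|$ without paying any factor that blows up where $\ell \downarrow 0$.
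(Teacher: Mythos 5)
Your proposal is correct and follows essentially the same route as the paper's proof: testing with $(\nabla\cdot\sigma,0)$ to kill $\nabla\cdot\sigma$, testing with $(0,\ell_h\nabla\cdot\omega)$ combined with the product rule to obtain the identity for $\|\sqrt{\ell\ell_h}\,\nabla\cdot\omega\|^2$, and invoking Lemma~\ref{l4.7} together with $\ell\le\ell_h$ to close the bound on $\|\nabla\cdot(\ell\omega)\|$ uniformly in $\ell$. The only cosmetic caveat is that dividing by $\|\sqrt{\ell\ell_h}\,\nabla\cdot\omega\|$ requires handling the trivial case where that quantity vanishes (in which case Lemma~\ref{l4.7} already gives $\|\ell_h\nabla\cdot\omega\|=0$), which the paper sidesteps by chaining the inequalities in the other order.
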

\begin{proof}
    Let $\mathcal{p}=\left(\sigma,\omega\right)\in X_{h,k}^\mathcal{p}$ such that
    \begin{align}\label{eq4.18}
        \left(\mathcal{S}_\ell\mathcal{p},\mathcal{u}^\prime\right)&=0 &
        \forall \mathcal{u}^\prime &\in X_{h,k}^\mathcal{u}.
    \end{align}
We now note that $\nabla\cdot X_{h, k}^\sigma\subseteq X_{h,k}^u$, so that we may set $\mathcal{u}^\prime=\left(\nabla\cdot\sigma,0\right)$ in \eqref{eq4.18} and conclude $\nabla\cdot\sigma=0$. Secondly, we set $\mathcal{u}^\prime=\left(0,\ell_h\nabla\cdot \omega\ \right)$ in \eqref{eq4.18}, where $\ell_h\in P_0\left(\Omega_h\right)$ is a constant on each element equal to the maximum value of $\ell$ on that element. Thus $\ell_h\nabla\cdot\omega\in X_{h,k}^r$, and \eqref{eq4.18} evaluates to: 
\begin{align}\label{eq4.19}
    \left(S\sigma+\nabla\cdot\left(\ell\omega\right), \ell_h\nabla\cdot\omega\right) = 0.
\end{align}
Due to Assumption \ref{a2.2b}, we have sufficient regularity to apply the chain rule, and thus \eqref{eq4.19} is rewritten as
\begin{align}\label{eq4.20}
    \left(\ell\nabla\cdot\omega,\ell_h\nabla\cdot\omega\right)+\left(S\sigma+\omega\nabla\ell,\ell_h\nabla\cdot\omega\right)=0. 
\end{align}
We now calculate
\begin{align}\label{eq4.21}
    \|\sqrt{\ell\ell_h}\nabla\cdot\omega\|^2
    =\left(\ell\nabla\cdot\omega,\ell_h\nabla\cdot\omega\right)
    &=-\left(S\sigma+\omega\nabla\ell,\ell_h\nabla\cdot\omega\right) \nonumber\\ 
    &\le \|S\sigma+\omega\nabla\ell\| \|\ell_h\nabla\cdot\omega\|.
\end{align}
To proceed, we note that $\nabla\cdot\omega\in P_{-k}\left(\Omega_h\right)^3$. Thus Lemma \ref{l4.7} ensures that a finite $C_k$ exists with which we rewrite \eqref{eq4.21} as 
\begin{align}\label{eq4.22}
    \|\ell_h\nabla\cdot\omega\| 
    &\le C_k \|\sqrt{\ell\ell_h}\nabla\cdot\omega\| 
    \le C_k \|S\sigma+\omega\nabla\ell\| \frac{\|\ell_h\nabla \cdot \omega\|}{\|\sqrt{\ell\ell_h}\nabla \cdot \omega\|}
    \le C_k^2 \|S\sigma+\omega\nabla\ell\|. 
\end{align}
We now establish the following bound for $\mathcal{p}=\left(\sigma,\omega\right)\in X_{h,k}^\mathcal{p}$ satisfying \eqref{eq4.18}:
\begin{align}
    \left(\|\mathcal{p}\|^2_{H_\ell\mathcal{M}}-\|\mathcal{p}\|^2\right)^{\frac{1}{2}}
    &=\|\omega\nabla\ell+\ell\nabla\cdot\omega\| \nonumber \\
    &\le \|\omega\nabla\ell\|+\|\ell\nabla\cdot\omega\| \nonumber \\ 
    &\le \|\omega\nabla\ell\|+\|\ell_h\nabla\cdot\omega\| \nonumber \\
    &\le \|\omega\nabla\ell\|+C_k^2\|S\sigma+\omega\nabla\ell\| \nonumber \\
    &\le\left(1+C_k^2\right)\|\omega\nabla\ell\|+C_k^2\|S\sigma\| \nonumber \\
    &\le\left(1+C_k^2\right)C_\ell\|\omega\|+C_k^2\|S\|\|\sigma\| \nonumber \\ 
    &\le \max \left(\left(1+C_k^2\right)C_\ell,C_k^2 \|S\|\right) \sqrt{2}\|\mathcal{p}\|.
\end{align}
Squaring both sides leads us to
\begin{align}\label{eq4.23}
    \|\mathcal{p}\|^2_{H_\ell\mathcal{M}}\le C\|\mathcal{p}\|^2,
\end{align}
where the constant $C$ does not depend on $\ell$, but only on the assumed upper bound on its gradient $C_\ell$.  Coercivity now follows by the same calculation as in Lemma \ref{l4.2}. 
\end{proof}

\begin{remark}[Improved convergence]\label{r4.8.0} It is natural to ask whether improved convergence results similar to Theorem \ref{th3.10}  and \ref{thm311} can be developed for the weakly coupled discretizations. Inspecting the proofs of these theorems, it is clear that the improved convergence results can not in general be expected to be parameter robust with respect to $\ell$ for the weakly coupled method. We will therefore not detail improved convergence results in this section, but summarize the observed improved convergence rates in Section \ref{sec5}. 
\end{remark} 

\begin{remark}[Computational cost]\label{r4.8} The weakly coupled discretization given in Theorem \ref{th4.5} is nominally a cheaper method than the strongly coupled discretization in Theorem \ref{th3.9} for a given element degree $k$. This is due to the fact that lower-order spaces are employed for the rotation variables, thus more evenly balancing the computational cost between displacements and rotations, leading to a lower total number of degrees of freedom. On the other hand, the strongly coupled $BDM$ method enjoys some form of improved convergence rates of all variables, and is thus comparatively speaking a $k+1$-order method. The choice of most efficient discretization for a given problem is thus a balance of considerations. 
\end{remark}

\begin{remark}[Elasticity]\label{r4.9} Finally, we emphasize that by construction, Theorem \ref{th4.5} contains a stable discretization of linear elasticity, coinciding with the so-called Arnold-Falk-Winther elements \cite{b9}. Thus, if $\ell=0$ on any nondegenerate subdomain $\Omega_0\subseteq\Omega$, then for any choice of finite-dimensional sub-space $X_h^\omega$ the discrete solution is ${\tilde\omega}_h=0$ on $\Omega_0$. Moreover, the remaining variables still satisfy all properties shown in Theorem \ref{th4.5}. Finally, the elastic discretization is robust in the limit of incompressible materials \cite{b9,b11}, a property that is inherited by the method when applied to Cosserat materials.  
\end{remark}

\section{Numerical verification}\label{sec5}
We verify the performance of the MFE discretizations proposed above in terms of convergence rates and robustness. We consider three examples. In the first and second examples, we assess the convergence and robustness relative to the material properties of the couple stress and linear stress, respectively. In the third example, we consider a choice of material tensors such that a pure linear elastic medium is modeled in parts of the domain, thus verifying the applicability of the weakly coupled MFE discretization. 

Results are presented for the discretization with strong coupling detailed in Theorem \ref{th3.9}, with both choices of couple stress space, i.e.~$W_k=RT_k$ and $W_k=BDM_{k+1}$. We abbreviate this discretization as “SC-RT” and “SC-BDM”. Results are also presented for the discretization with weak coupling detailed in Theorem \ref{th4.5}, also with both choices of couple-stress space, which are abbreviated as “WC-RT” and “WC-BDM”. 

Throughout the section, we will consider an isotropic medium according to Example \ref{ex2.4}, with material parameters: 
\begin{align}\label{eq5.1}
    \mu =\lambda_\omega  &= 1,	&	
    \mu_\sigma^c = \mu_\omega^c &= 1/10.
\end{align}
Note that the values of the coupling coefficients $\mu_\sigma^c$ and $\mu_\omega^c$ are chosen as non-unitary, to avoid the simplification mentioned at the end of Example \ref{ex2.4}. The values of $\lambda_\sigma$ and $\ell$ will be varied to validate robustness, and are thus specified in the various subsections. Nevertheless, we note that for $0\le\lambda_\sigma<\infty$, these material parameters clearly satisfy Assumption \ref{a2.1}, and further satisfy Assumption \ref{a2.2a} if $\ell>0$. Finally, for spatially varying (piecewise linear) $\ell\in\left[0,1\right]$, Assumption \ref{a2.2b} holds.

When assessing the convergence of the methods, the norms given in Theorems \ref{th3.9} and \ref{th4.5} are used for the strongly and weakly coupled discretizations, respectively. When assessing the improved convergence of the methods, we use method-specific norms, consolidating both the theoretical results (Theorems \ref{th3.10} and \ref{thm311}) as well as the numerical results. For the strongly coupled methods, the norms in which we measure improved convergence are given by: 
\begin{subequations} \label{eq5.IC-norms1}
\begin{align} 
    \left\|(\mathcal{p},\mathcal{u} )\right\|^*_\mathrm{SC-RT} &= 
    \left\|\omega\right\| 
    +\left\{\left\|r\right\|
    +\left\|\varpi_{h,k}u\right\|\right\} 
    \label{eq5.IC-norms1a}\\
    \left\|(\mathcal{p} ,\mathcal{u} )\right\|^*_\mathrm{SC-BDM} &= 
    \left\{\left\|\sigma \right\|
    +\left\|\omega \right\|_{H\left(\nabla \cdot; \mathbb{M}\right)}
    +\left\|r\right\|+\left\|\varpi_{h,k}u\right\|\right\}
    \label{eq5.IC-norms1b}
\end{align}
\end{subequations}

Here the braces around terms in the norm indicates that improved convergence is proved in the cited theorems, while for the remaining term improved convergence is seen numerically, but has not been proved. For the weakly coupled methods, the improved convergence is not in general $\ell$-robust, and we therefore specify the norms of improved convergence in the relevant sections. 

For all cases, we consider as domain a unit cube domain, with manufactured solutions in $C_0^\infty\mathcal{V}$. In Sections \ref{sec5.1} and \ref{sec5.2}, a regular triangulation is employed as illustrated in Figure \ref{fig5.1}. In Section \ref{sec5.3}, the triangulation is adapted to resolve the structure of $\ell(x)$.

\begin{figure}[ht!]
    \centering
    \includegraphics[width=0.48\textwidth]{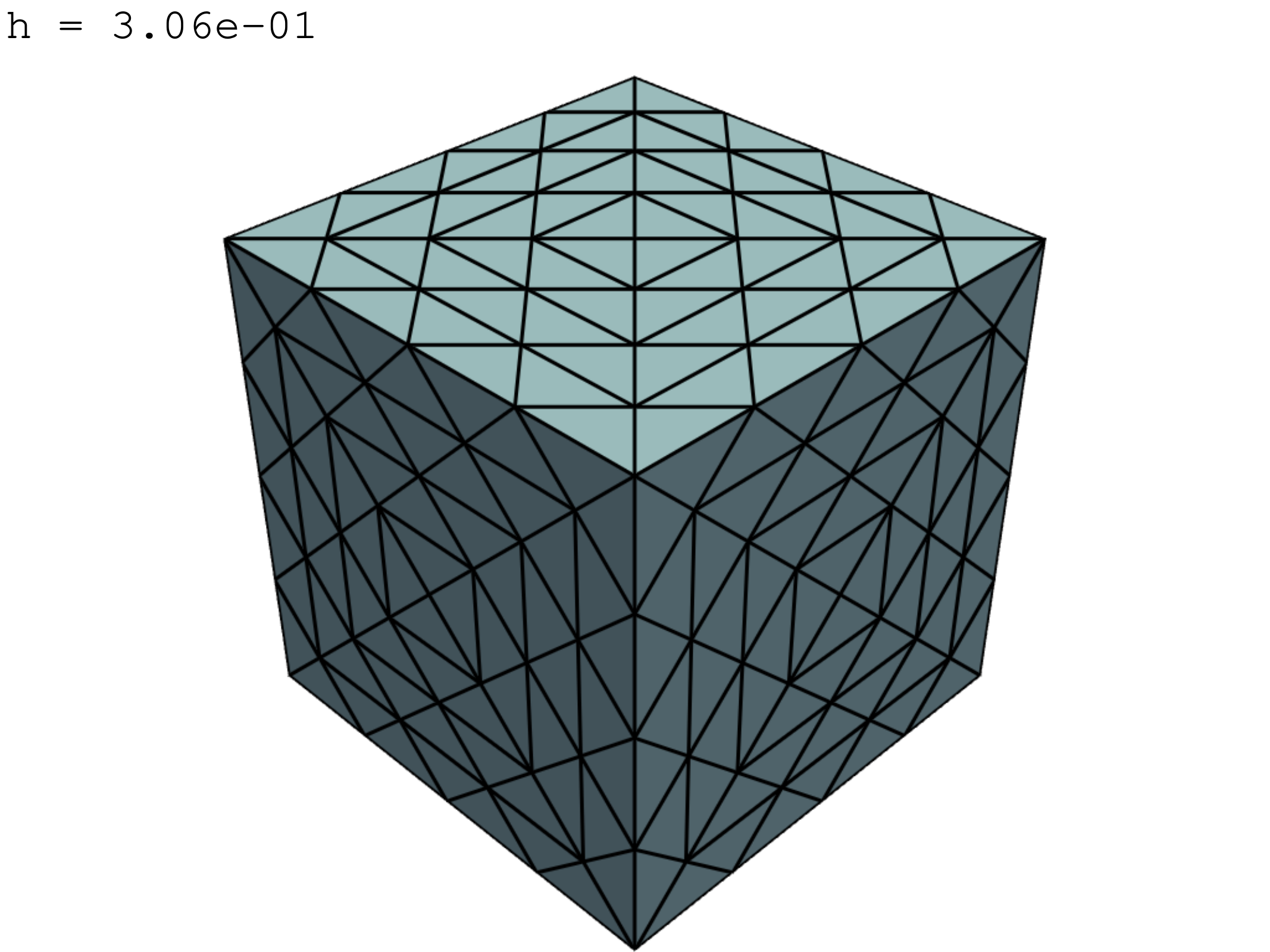}
    \includegraphics[width=0.48\textwidth]{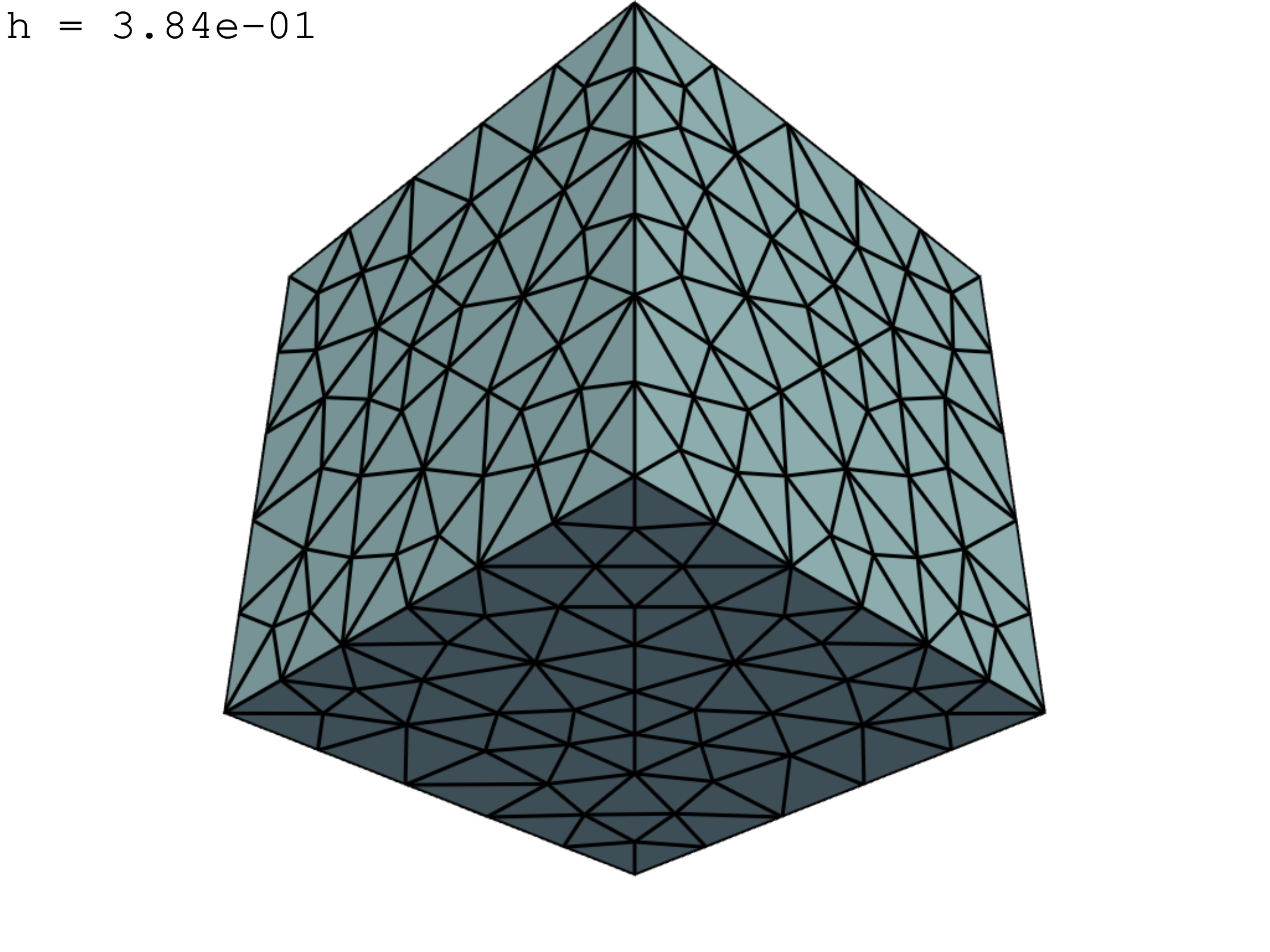}
    \caption{Examples of grid used in the numerical validation. Left: Grid for Section \ref{sec5.1} and \ref{sec5.2}. Right: Grid for Section \ref{sec5.3}.}
    \label{fig5.1}
\end{figure}

It is important to remark that due to the saddle-point structure of the formulation, efficient preconditioning is essential for the numerical performance of the implementation. To this end, we have implemented an operator-based norm-equivalent preconditioning \cite{mardal2011preconditioning}, adapting the approach detailed for the Biot equations \cite{baerland2017weakly}. This preconditioner is robust with respect to the material and discretization parameters considered herein. 

Python scripts were employed to generate all the numerical values and figures in this study. For each numerical example, the table data is generated by a dedicated script. For reproducing numerical results and figures, a Docker image has been created, serialized, and made available for download \cite{DBN_DockerMFECosserat}.

%

\subsection{Verification of expected convergence rates and stability for \texorpdfstring{$\ell>0$}{l>0}}\label{sec5.1}
In this section, we validate the stated \emph{a priori} convergence rates of both the strongly and weakly coupled MFE methods detailed in Theorem \ref{th3.9} and \ref{th4.5}. In this subsection, we fix $\lambda_\sigma=1$, while $\ell > 0$ will be considered constant in space. We present results for the two lowest-order spaces, $k=0,1$, for both the weakly and strongly coupled MFE approximations. 

We consider the unit cube domain $\Omega=\left[0,1\right]^3$, with an exact solution $\left(u,r\right)\in C_0^\infty\mathcal{V}$ defined by: 
\begin{subequations}
\begin{align}
    u(x) &= \sum_{i=1}^{3}{\sin{\left(\pi x_i\right)}\left(1-x_{i+1}\right)x_{i+1}\left(1-x_{i-1}\right)x_{i-1}\bm{e}_i}, \label{eq5.2}\\ 
    r(x) &= \sum_{i=1}^{3}{\left(1-x_i\right)x_i\sin{\left(\pi x_{i+1}\right)}\sin{\left(\pi x_{i-1}\right)}\bm{e}_i}, \label{eq5.3}
\end{align}
\end{subequations}
where as usual the indices are understood modulo 3, and $\bm{e}_i=\nabla x_i$ is the unit vector in the $i$-th coordinate direction. The solution is chosen to have high regularity, satisfy zero Dirichlet boundary conditions, and contain a mix of polynomial and geometric terms to reduce the chance of spurious super-convergence phenomena. 

The convergence results for all methods are shown in Figure \ref{fig5.2}, for both $k=0$ and $k=1$. Considering first the results with $\ell=1$, we note that all methods converge according to the expected rates. Moreover, considering the results for $\ell=\left\{{10}^{-2},{10}^{-4}\right\}$, we see that the weakly coupled method is robust for varying $\ell$, while the strongly coupled method SC-RT is as expected not robust, and a large error is observed on coarse grids. The strongly coupled method SC-BDM is also not robust from a theoretical perspective, however, within the range of $\ell$ considered here, it performs well in practice, and no loss of accuracy is seen at any grid level. Keeping in mind that  $A_\omega\sim\ell^{-2}$, this suggests that the SC-BDM method is suitable for a wide range of relative strengths between the stress and couple-stress material tensors. 

We also report improved convergence results in Figure \ref{fig5.3}. For the strongly coupled discretizations, the norms plotted are given in equations \eqref{eq5.IC-norms1}.  For the weakly coupled methods, we report the results for only the error in the projected displacement: 
\begin{align} 
\left\|(\mathcal{p},\mathcal{u})\right\|^*_\mathrm{WC-RT} = 
\left\|(\mathcal{p},\mathcal{u})\right\|^*_\mathrm{WC-BDM} = 
\left\|\varpi_{h,k}u\right\|.
\label{eq5.IC-norms2}
\end{align}

As expected, the weakly coupled methods enjoy good properties of improved convergence, and indeed for the SC-BDM method, all variables enjoy some form of improved convergence, which appears to $\ell$-robust in the range considered herein, so that the method can essentially be considered one degree higher than its nominal rate. The improved convergence of the SC-RT method is also according to expectation, but is not $\ell$-robust in magnitude. The SC-RT method also enjoys an improved convergence in the couple-stress, which is not completely unexpected from literature, but which is known to be difficult to prove in general \cite{bank2019superconvergent}. Finally, our numerical results reveal that the weakly coupled methods only enjoy improved convergence properties with respect to the projected displacement. 

Overall, the observed convergence of the methods is summarized as follows: 

\begin{observation}[Numerical convergence and robustness for spatially constant $\ell$]\label{o5.1} 
    Under the conditions of Assumption \ref{a2.1} and \ref{a2.2a}, both the strongly coupled and weakly coupled MFE methods are convergent, satisfying the rates stated in Theorems \ref{th3.9}, \ref{th3.10}, \ref{thm311} and \ref{th4.5}. For the weakly coupled method, the observed convergence rates are fully robust with respect $\ell$, as expected from theory. For the methods with strong coupling, a sensitivity to small values of $\ell$ is observed, however, the SC-BDM method appears to be more robust than SC-RT in practice. Furthermore, improved convergence rates are observed numerically for additional variables as specified in equations \eqref{eq5.IC-norms1} and \eqref{eq5.IC-norms2}. 
\end{observation}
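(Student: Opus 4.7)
The plan is to treat Observation~\ref{o5.1} as a statement to be \emph{verified numerically}, since it is not an analytical theorem but a summary of empirical behavior supplementing Theorems~\ref{th3.9}, \ref{th3.10}, \ref{thm311} and \ref{th4.5}. The approach is to run a carefully controlled convergence study on a manufactured solution and to read off the rates and constants from the resulting data. The key design choice is to pick the manufactured pair $(u,r)$ so that both components are smooth, satisfy homogeneous Dirichlet boundary conditions, and are rich enough to avoid accidental superconvergence. The polynomial-trigonometric choice in \eqref{eq5.2}--\eqref{eq5.3} serves exactly this purpose. Source terms $f_u$ and $f_r$ are then obtained symbolically by substituting $(u,r)$ into the constitutive and balance laws \eqref{eq2.5a}--\eqref{eq2.5d} using the material tensors of Example~\ref{ex2.4} with the parameters \eqref{eq5.1}.

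Next, I would assemble all four discrete systems (SC-RT, SC-BDM, WC-RT, WC-BDM) on a family of uniformly refined tetrahedral meshes like the one in Figure~\ref{fig5.1}, for orders $k=0$ and $k=1$, and solve each system using the norm-equivalent block preconditioner mentioned after Figure~\ref{fig5.1}. For each run I would compute the error in the exact norm appearing in Theorem~\ref{th3.9} (respectively Theorem~\ref{th4.5}) to check the \emph{a priori} rates, and separately in the norms \eqref{eq5.IC-norms1} and \eqref{eq5.IC-norms2} to check improved convergence. Robustness with respect to $\ell$ is then probed by repeating the entire study for $\ell\in\{1,10^{-2},10^{-4}\}$ held constant across $\Omega$; under Assumption~\ref{a2.2a} all four discretizations are eligible, so the distinguishing feature to extract from the data is whether the error constants remain bounded as $\ell\downarrow 0$. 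Claims 1--3 of the observation reduce to reading off the asymptotic slopes and pre-asymptotic plateaus in the resulting convergence plots (Figures~\ref{fig5.2} and \ref{fig5.3}).

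The main obstacle I expect is not the theory but the experimental discipline needed to make the verification unambiguous. In particular: (i) the linear systems become badly scaled when $\ell$ is small, so the preconditioner must be genuinely $\ell$-robust, otherwise apparent loss of accuracy could be confused with numerical solver error; (ii) improved-convergence rates rely on projected quantities such as $\varpi_{h,k}u$, which must be computed consistently with the same quadrature as the stiffness assembly; (iii) for the SC-BDM method the theory does not \emph{prove} $\ell$-robustness, so the numerical study has to sweep a sufficient range of $\ell$ to support the qualitative claim that it performs better than SC-RT in practice without overstating it. To keep the procedure reproducible, I would fix a dedicated script per table, serialize the runs in a Docker image as in \cite{DBN_DockerMFECosserat}, and tabulate the observed rates alongside the theoretical ones, with the gap between them acting as the final check on Observation~\ref{o5.1}.
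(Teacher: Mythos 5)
Your proposal matches the paper's own verification essentially point for point: the same manufactured solution \eqref{eq5.2}--\eqref{eq5.3} on the unit cube with parameters \eqref{eq5.1}, the same sweep over spatially constant $\ell\in\{1,10^{-2},10^{-4}\}$ for $k=0,1$, errors measured in the norms of Theorems~\ref{th3.9} and \ref{th4.5} plus the improved-convergence norms \eqref{eq5.IC-norms1} and \eqref{eq5.IC-norms2}, and reproducibility via the Docker image. This is the correct reading of the Observation as an empirical summary of Figures~\ref{fig5.2} and \ref{fig5.3} rather than an analytical claim, and your cautions about $\ell$-robust preconditioning and not overstating the SC-BDM robustness are consistent with how the paper qualifies its conclusions.
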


\begin{figure}[ht!]
    \centering
    \includegraphics[width=0.49\textwidth]{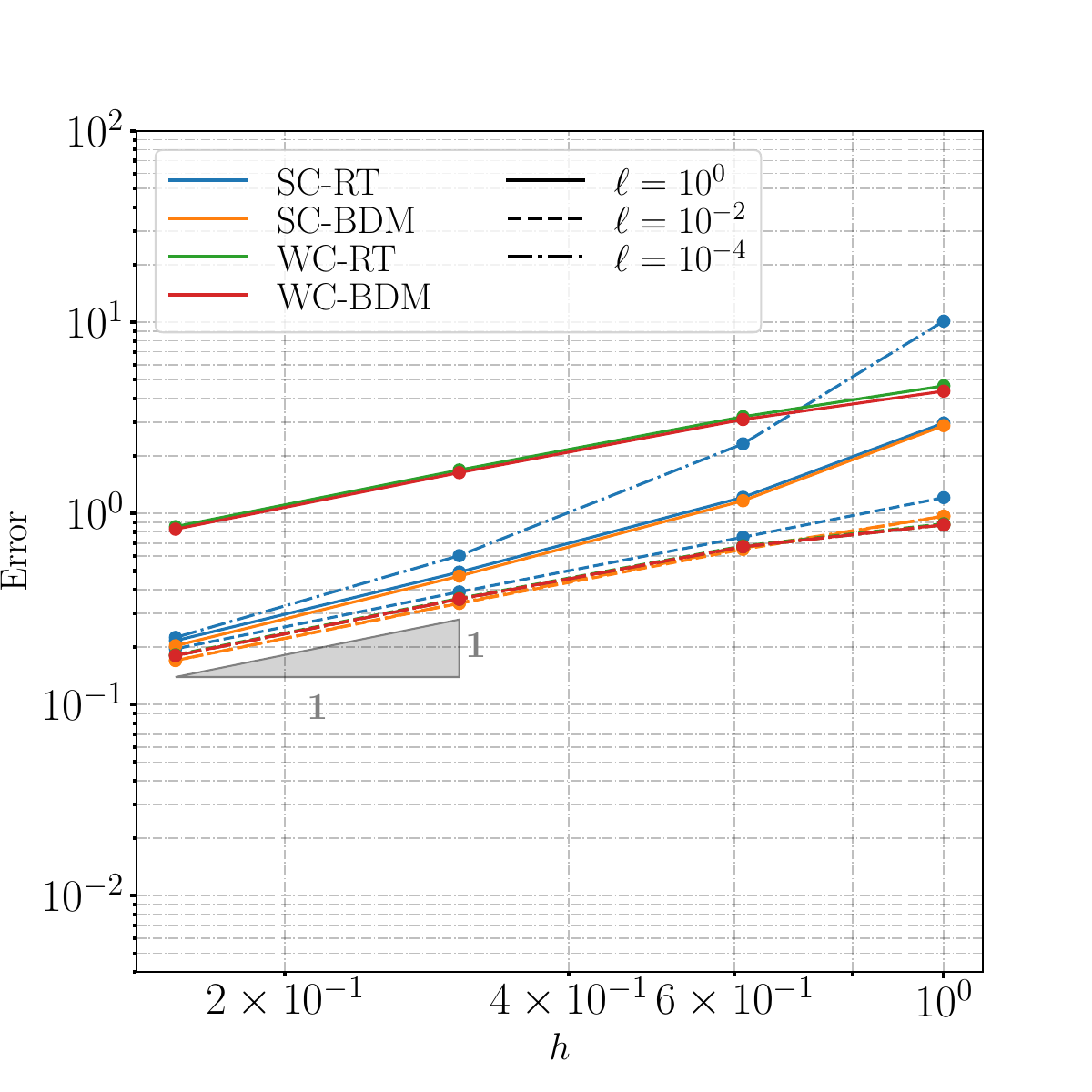}
    \includegraphics[width=0.49\textwidth]{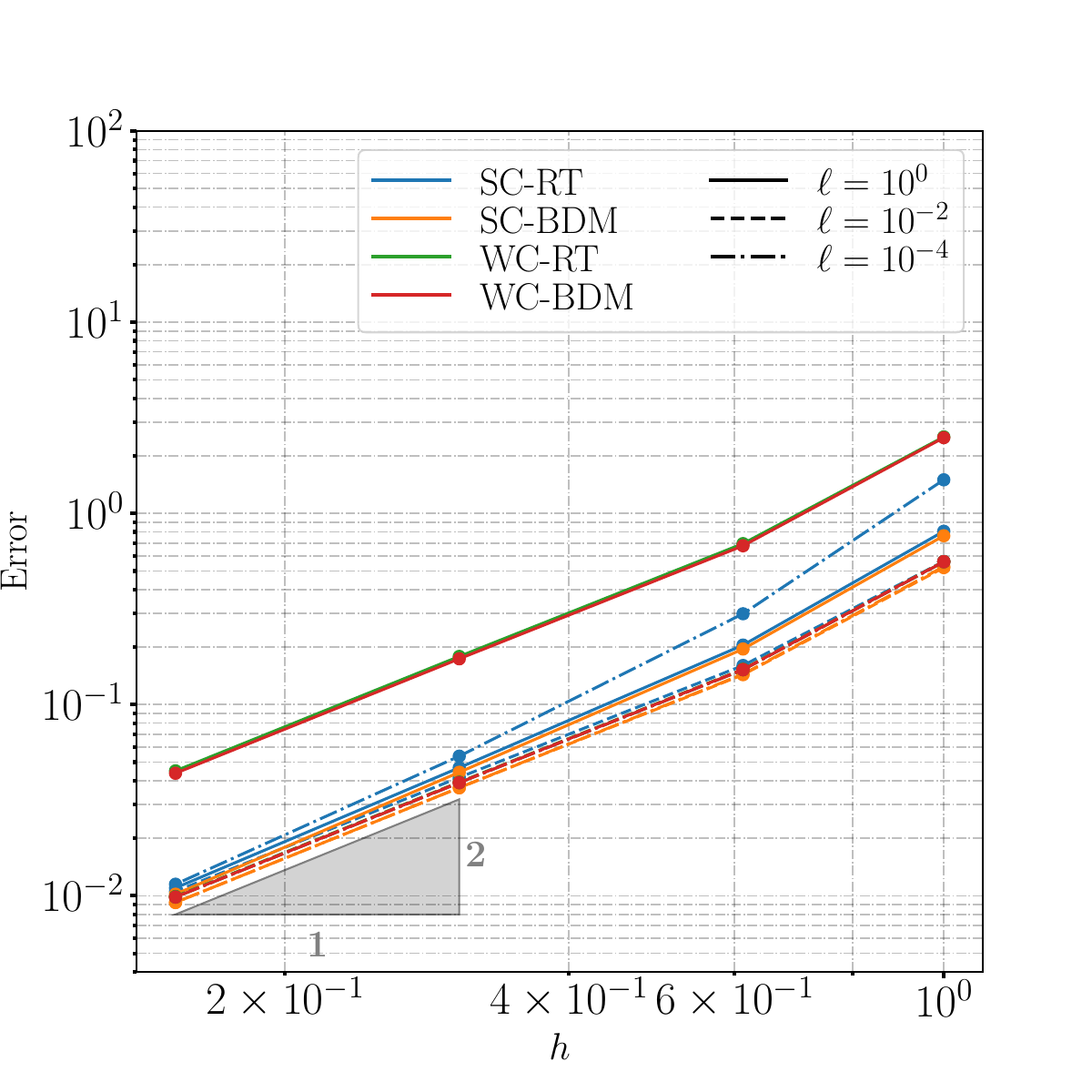}
    \caption{Convergence data for the methods as summarized in Theorem \ref{th3.9} and \ref{th4.5}, for spatially constant $\ell\in\left\{{1,10}^{-2},{10}^{-4}\right\}$ and $\lambda_\sigma=1$. The error is calculated according to equations \eqref{eq3.20} and \eqref{eq4.17a}, for the strongly coupled and weakly coupled methods, respectively. Left panel shows convergence results for $k=0$, right panel for $k=1$.}
    \label{fig5.2}
\end{figure}

\begin{figure}[ht!]
    \centering
    \includegraphics[width=0.49\textwidth]{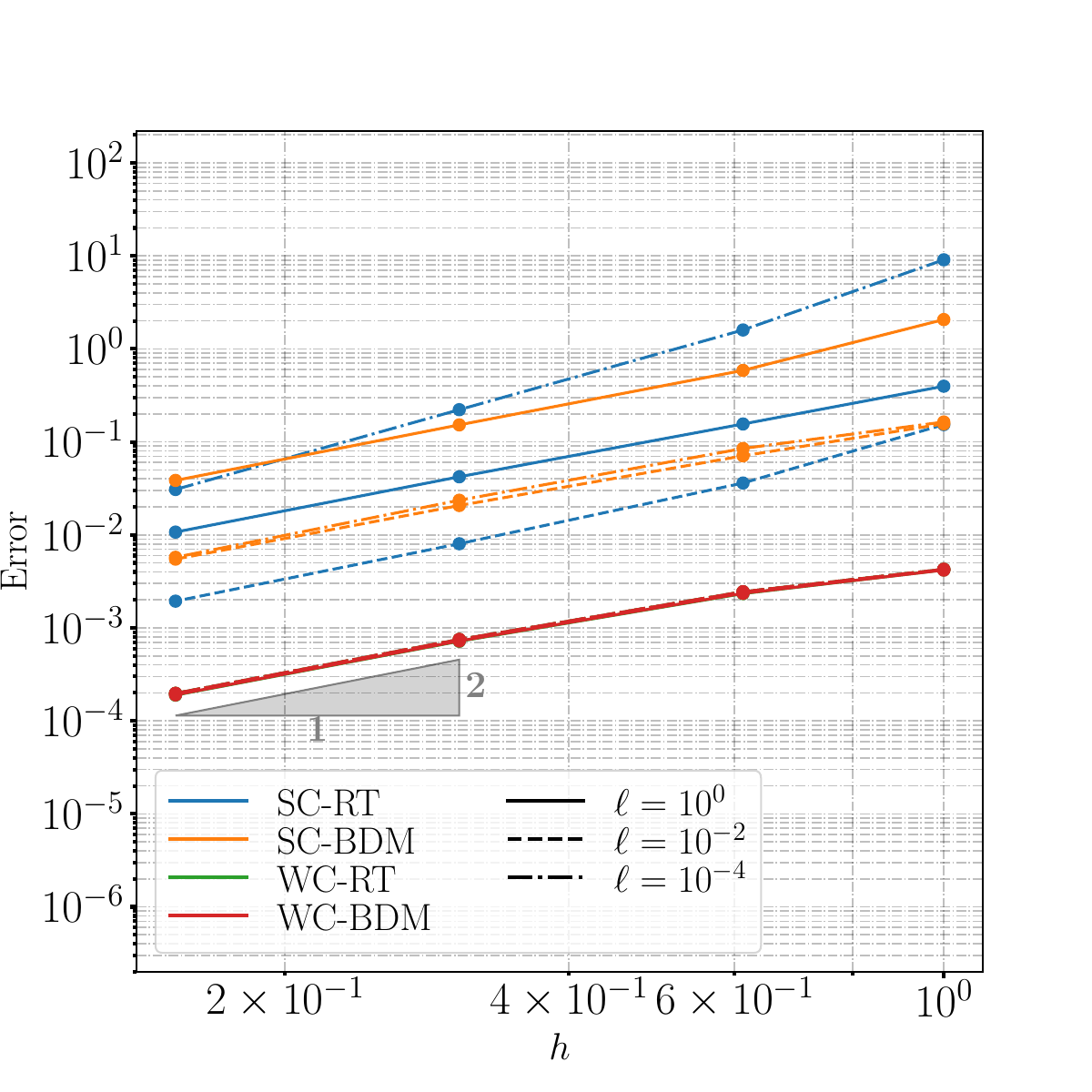}
    \includegraphics[width=0.49\textwidth]{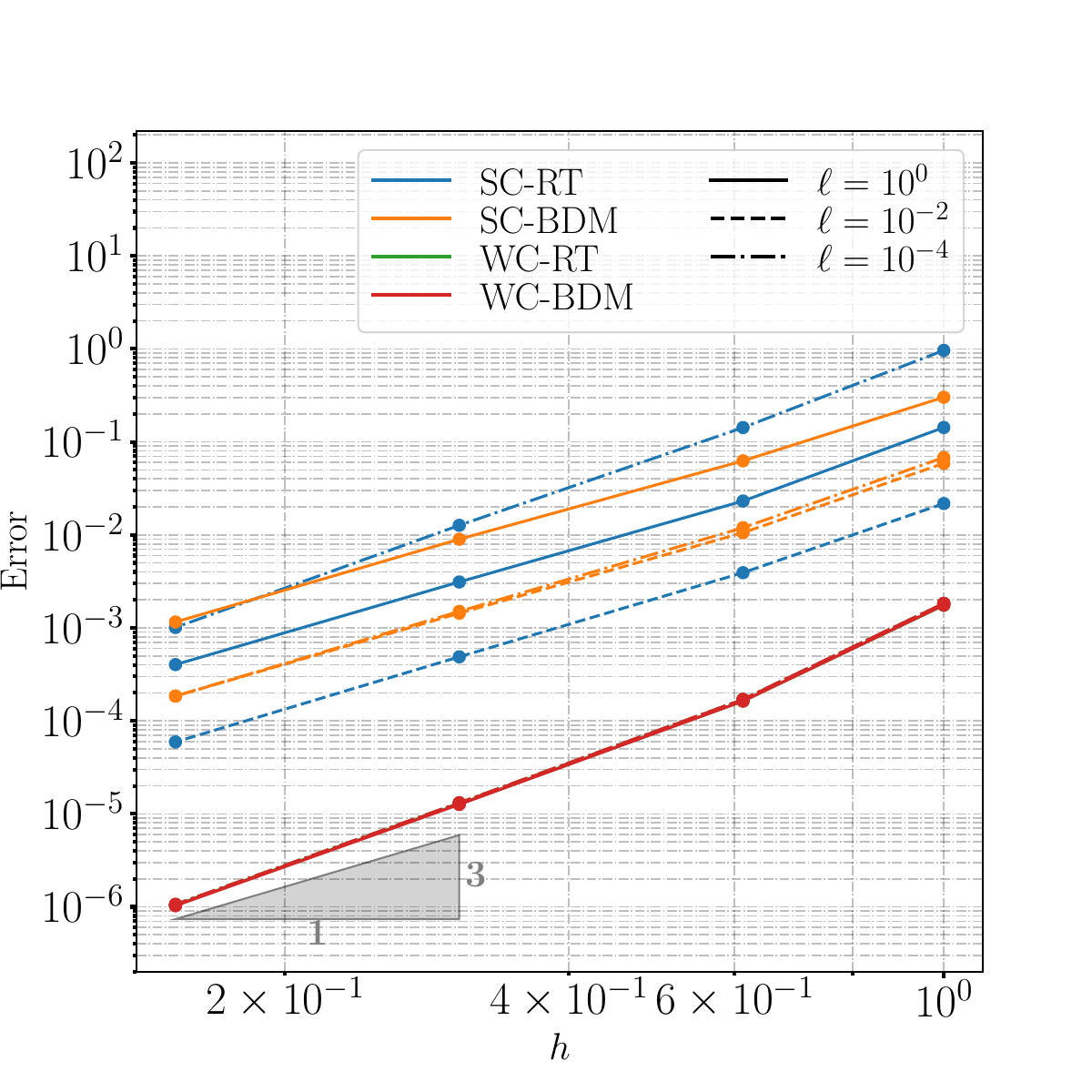}
    \caption{Observed improved convergence rates seen using the norms stated in equations \eqref{eq5.IC-norms1} and \eqref{eq5.IC-norms2} for spatially constant $\ell\in\left\{{1,10}^{-2},{10}^{-4}\right\}$ and $\lambda_\sigma=1$. Left panel shows convergence results for $k=0$, right panel for $k=1$.}
    \label{fig5.3}
\end{figure}

\subsection{Verification of expected convergence rates and stability for nearly incompressible materials}\label{sec5.2}

In this section, we verify the robustness of the strongly and weakly coupled MFE methods for in the limit of nearly incompressible materials, as outlined in Remark \ref{r2.5}. In this subsection, we fix $\ell=1$, while $\lambda_\sigma$ will be considered a parameter constant space. As in the previous section, we present results for the two-lowest-order spaces, $k=0,1$, for both the weakly and strongly coupled MFE approximations. 

We again consider the unit cube domain $\Omega=\left[0,1\right]^3$. However, when considering incompressible materials, it is important to note that the solution will satisfy $\nabla\cdot u=0$ in the limit of $\lambda_\sigma\rightarrow0$. This property does not hold for the expression in \eqref{eq5.2}, therefore we will in this section use the following expression for the exact solution of $u\in C_0^\infty\mathbb{V}$: 
\begin{align}\label{eq5.4}
u(x)=\nabla\times\left(\left(\bm{e}_2+\bm{e}_3\right)\prod_{i=1}^{3}\sin^2{\left(\pi x_i\right)}\right).
\end{align}
Clearly, $u(x)$ defined in \eqref{eq5.4} will satisfy $\nabla\cdot u=0$, and moreover, it is easy to verify that it satisfies the Dirichlet boundary conditions due to the presence of quadratic sine functions.

The results are shown as in the previous section both for convergence in all variables (Figure \ref{fig5.3}). The improved convergence rates (Figure \ref{fig5.4}) are as before evaluated for the strongly coupled discretizations in the norms stated in \eqref{eq5.IC-norms1}. For the weakly coupled methods according, we use the norms: 
\begin{subequations} \label{eq5.IC-norms3}
\begin{align} 
\left\|(\mathcal{p}, \mathcal{u} )\right\|^*_\mathrm{WC-RT} &= \left\|\sigma \right\| + h^{1/2} \|\tilde{\omega}\|_{H(\nabla\cdot\ell)} + \left\|\varpi_{h,k}r\right\|+\left\|\varpi_{h,k}u\right\|,
\label{eq5.IC-norms3a}\\
\left\|(\mathcal{p}, \mathcal{u} )\right\|^*_\mathrm{WC-BDM} &= \left\|\sigma \right\| + \|\tilde{\omega}\| + \left\|\varpi_{h,k}r\right\|+\left\|\varpi_{h,k}u\right\|.
\label{eq5.IC-norms3b}
\end{align}
\end{subequations}%
Note the scaling of $h^{1/2}$ in front of the second term in the definition of the WC-RT norm. This reflects that we observe somewhere between a half and a full order of improved convergence for this variable.  

We observe that all discretizations are fully robust with respect to variations in $\lambda_\sigma\in\left\{{10}^0,{10}^2,{10}^4\right\}$, and indeed the lines for the different values of $\lambda_\sigma$ can hardly be distinguished in the plots. It is also noteworthy that for non-degenerate $\ell$, the weakly coupled discretizations also enjoys almost as favorable convergence properties as their strongly coupled counterparts. 

\begin{observation}[Robustness for incompressible materials]\label{o5.2} 
    Under the conditions of Assumption \ref{a2.1} and \ref{a2.2a}, both the strongly coupled and weakly coupled MFE methods are convergent, satisfying the rates stated in Theorems \ref{th3.9}, \ref{th3.10}, \ref{thm311} and \ref{th4.5}. Additional improved convergence rates are observed according to equations \eqref{eq5.IC-norms1} and \eqref{eq5.IC-norms3}. All results  are fully robust with respect to $\lambda_\sigma$, as expected from theory.   
\end{observation}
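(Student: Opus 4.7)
The plan is to verify this claim by a controlled numerical experiment, since Observation~\ref{o5.2} is an empirical summary of the data in Subsection~\ref{sec5.2} rather than a standalone mathematical statement. The theoretical underpinning is already in place: the rates stated in Theorems~\ref{th3.9} and \ref{th4.5} hold under Assumptions~\ref{a2.1} and \ref{a2.2a}, and the isotropic parameters (\ref{eq5.1}) together with fixed $\ell=1$ and $0 \le \lambda_\sigma < \infty$ satisfy these assumptions. The $\lambda_\sigma$-robustness part is essentially Remark~\ref{r2.5}: for the isotropic material tensor from Example~\ref{ex2.4}, \cite[Prop.~9.1.1]{b11} provides bounds on $A_\sigma$ (and $A_\sigma^{-1}$) that are uniform in $\lambda_\sigma$, and these propagate through the continuity and coercivity arguments of Lemmas~\ref{l4.1}--\ref{l4.3}, so the constants hidden in Theorems~\ref{th3.9} and \ref{th4.5} do not blow up as $\lambda_\sigma \to \infty$. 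Nothing further needs to be proved rigorously; what remains is to exhibit the numerics.

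First I would set up the experiment so that the incompressible limit is actually exercised. Since the source term $f_u$ is computed from the manufactured $u$ via $f_u = -\nabla\cdot\sigma$ with $\sigma$ determined by the constitutive law, any component of $\nabla\cdot u$ gives a contribution to $f_u$ that scales like $\lambda_\sigma$; such a right-hand side would mask the robustness of the method. I would therefore choose a divergence-free manufactured displacement on $\Omega = [0,1]^3$, such as the curl of a potential of the form in \eqref{eq5.4}, namely $u = \nabla \times \bigl((\bm{e}_2+\bm{e}_3)\prod_{i=1}^3 \sin^2(\pi x_i)\bigr)$. The squared sine factors give $u = 0$ on $\partial\Omega$ automatically, and $\nabla\cdot u = 0$ holds identically in $\Omega$. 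The rotation $r$ is chosen independently, with homogeneous boundary trace, so that the full right-hand side $(f_u, f_r)$ remains $\mathcal{O}(1)$ as $\lambda_\sigma$ varies.

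Next I would run SC-RT, SC-BDM, WC-RT and WC-BDM for $k \in \{0,1\}$ on a sequence of uniformly refined simplicial meshes (the regular triangulation of Figure~\ref{fig5.1}), sweeping $\lambda_\sigma \in \{10^0, 10^2, 10^4\}$ while holding $\ell = 1$, and tabulate the errors in the norms of \eqref{eq3.20} and \eqref{eq4.17a} (for the basic rates) and of \eqref{eq5.IC-norms1} and \eqref{eq5.IC-norms3} (for improved convergence). Robustness is declared if, for each scheme, the error-vs-$h$ curves corresponding to different $\lambda_\sigma$ collapse onto one another within numerical precision; the asymptotic slope then simultaneously confirms the predicted rate.

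The subtlety I would expect is in the \emph{improved} convergence for the weakly coupled methods, where Remark~\ref{r4.8.0} already warns that $\ell$-robustness need not hold. Here $\ell=1$ is fixed, so that caveat is inert; nevertheless the $h^{1/2}$ prefactor in front of $\|\tilde\omega\|_{H(\nabla\cdot\ell)}$ in \eqref{eq5.IC-norms3a} is not predicted by Theorems~\ref{th3.10}--\ref{thm311} and should be presented as empirically fitted, reflecting an order of improvement that sits strictly between a half and a full step. I would therefore document, rather than attempt to derive, this rate, and I would be careful not to claim improved rates for WC variables that are absent from the data (in keeping with the observation that weakly coupled methods only gain an order in the projected displacement, cf. \eqref{eq5.IC-norms2}).
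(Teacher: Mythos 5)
Your proposal matches the paper's own treatment: Observation \ref{o5.2} is justified empirically by the experiment of Section \ref{sec5.2}, using the divergence-free manufactured displacement \eqref{eq5.4} on the unit cube with $\ell=1$ and $\lambda_\sigma\in\{10^0,10^2,10^4\}$, measuring errors in the norms \eqref{eq3.20}, \eqref{eq4.17a}, \eqref{eq5.IC-norms1} and \eqref{eq5.IC-norms3}, and appealing to Remark \ref{r2.5} (via \cite[Prop.~9.1.1]{b11}) for the theoretical expectation of $\lambda_\sigma$-independence. Your additional remarks --- the need for $\nabla\cdot u=0$ so the data do not scale with $\lambda_\sigma$, and the empirical (unproved) status of the $h^{1/2}$-weighted term in \eqref{eq5.IC-norms3a} --- are consistent with the paper's own caveats, so nothing further is needed.
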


\begin{figure}[ht!]
    \centering
    \includegraphics[width=0.49\textwidth]{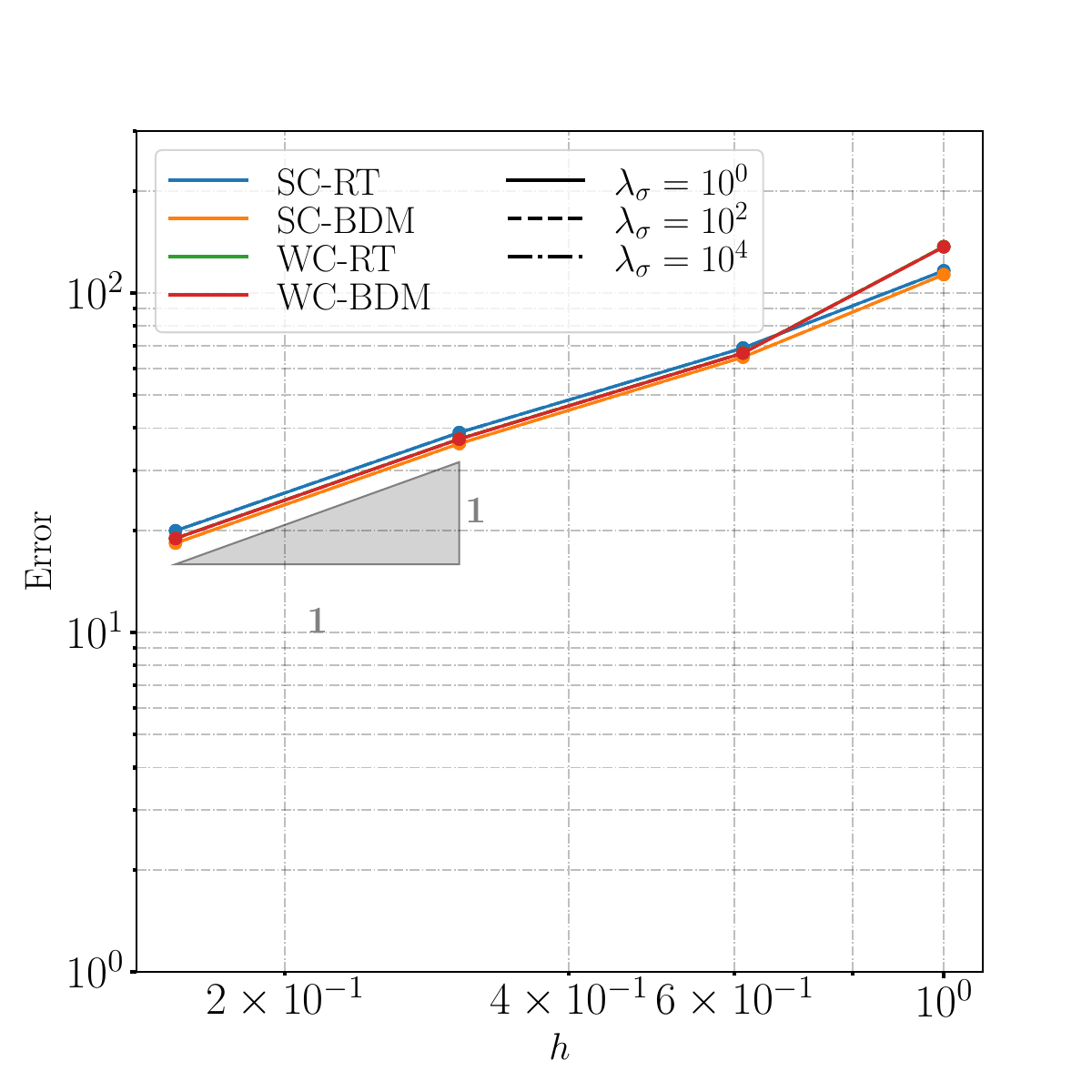}
    \includegraphics[width=0.49\textwidth]{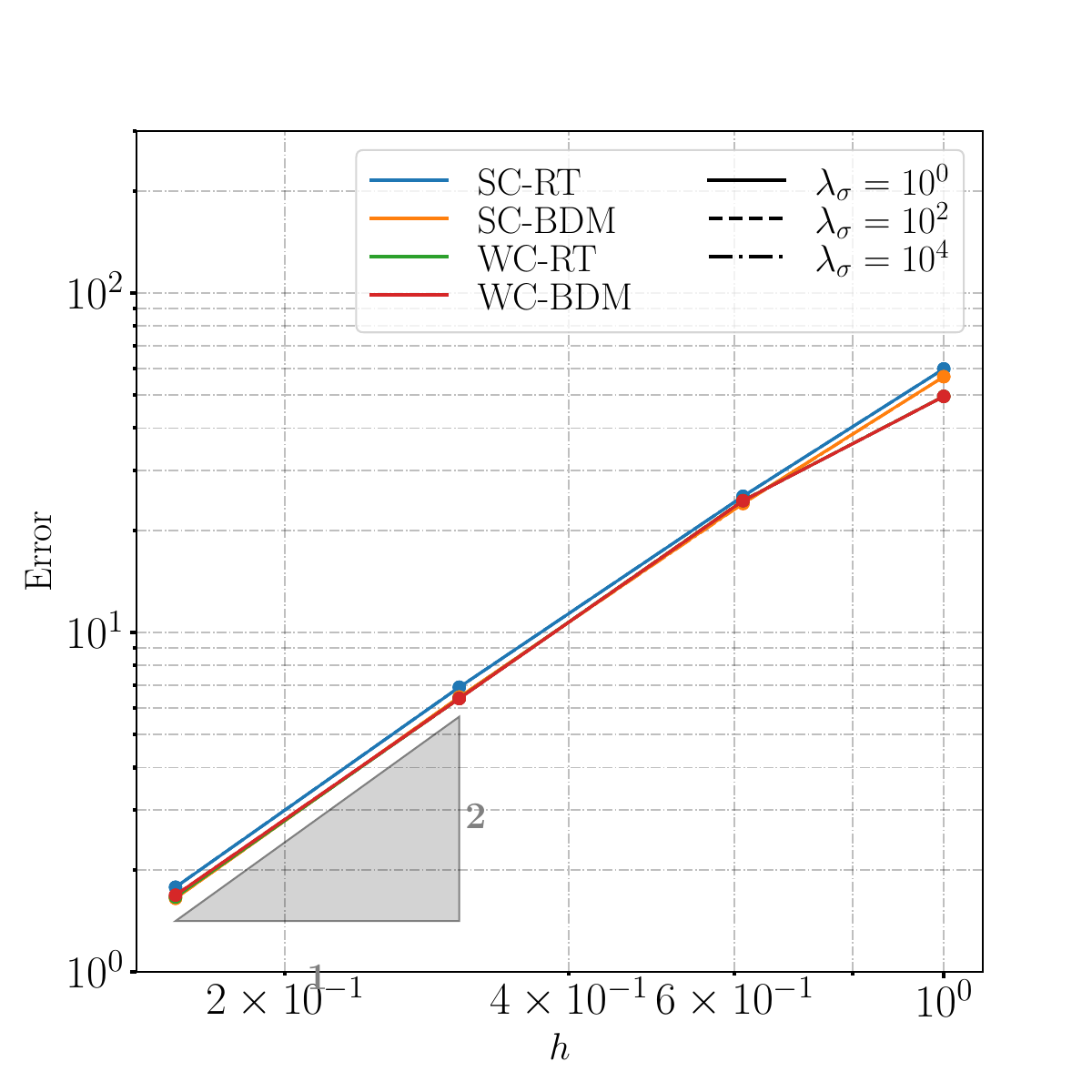}
    \caption{Convergence data for the methods as summarized in Theorem \ref{th3.9} and \ref{th4.5}, for spatially constant $\lambda_\sigma\in\left\{{1,10}^2,{10}^4\right\}$ and $\ell=1$. The error is calculated according to equations \eqref{eq3.20} and \eqref{eq4.17a}, for the strongly coupled and weakly coupled methods, respectively. Left panel shows convergence results for $k=0$, right panel for $k=1$. Note that as the methods are fully robust with respect to $\lambda_\sigma$, the lines are overlapping within the resolution of the figure.}
    \label{fig5.4}
\end{figure}

\begin{figure}[ht!]
    \centering
    \includegraphics[width=0.49\textwidth]{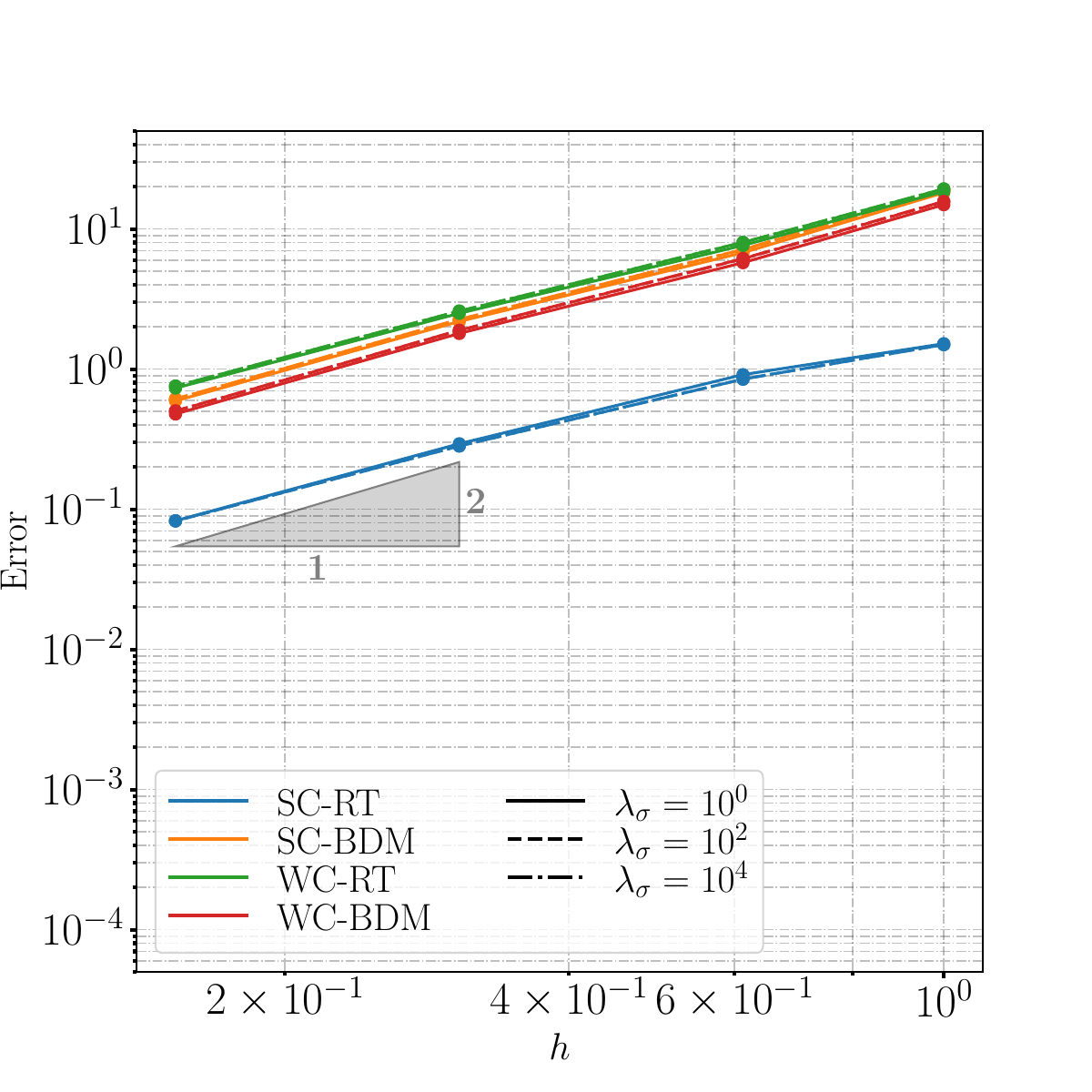}
    \includegraphics[width=0.49\textwidth]{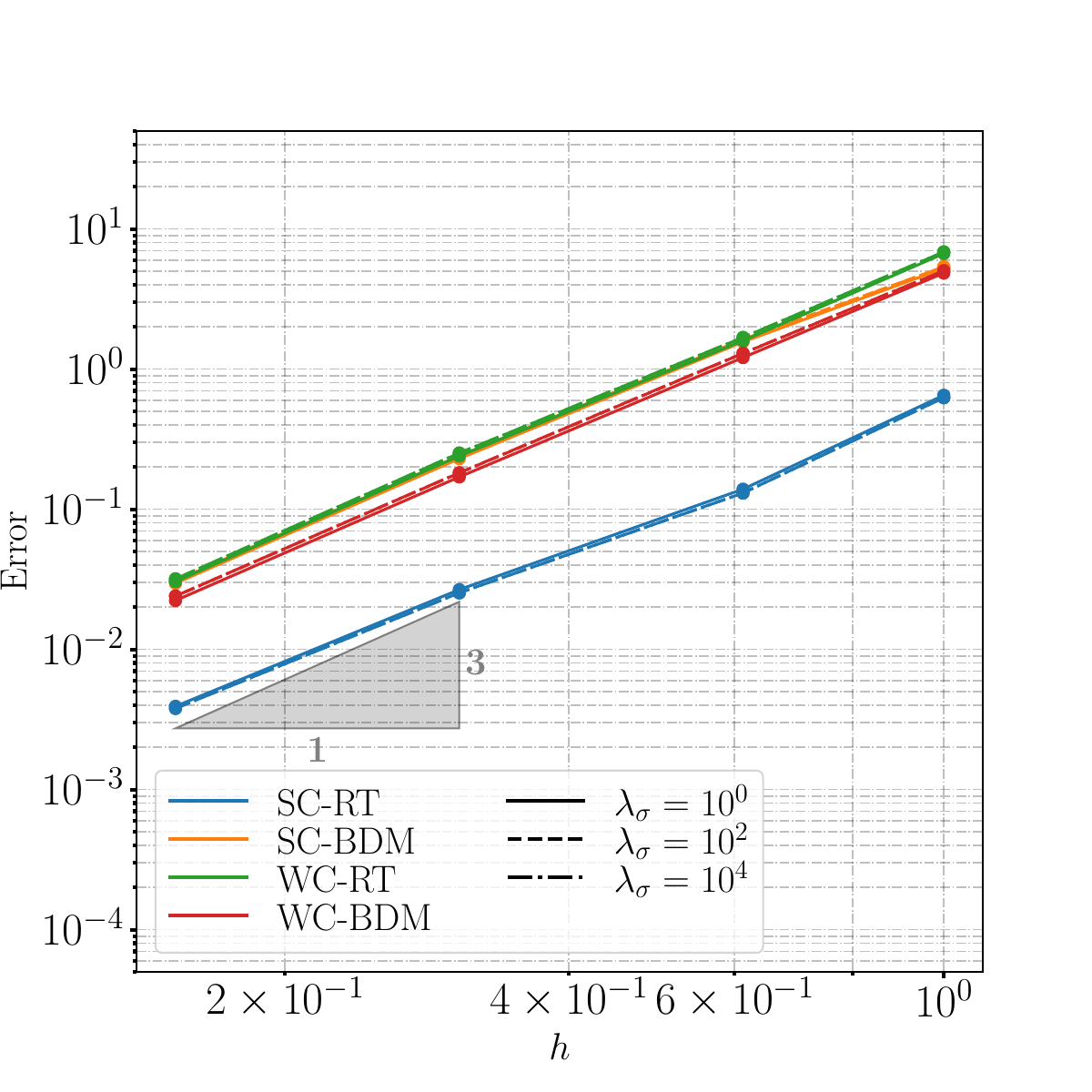}
    \caption{Super-convergence data for the strongly coupled methods as summarized in Theorem \ref{th3.9}. for spatially constant $\lambda_\sigma\in\left\{{1,10}^2,{10}^4\right\}$ and $\ell=1$. Super-convergence appears in the rotation and couple-stress variables, and is calculated according to \eqref{eq3.27}. Left panel shows convergence results for $k=0$, right panel for $k=1$. Note that as the methods are fully robust with respect to $\lambda_\sigma$, the lines are overlapping within the resolution of the figure.}
    \label{fig5.5}
\end{figure}

\subsection{Verification of robustness for degenerate \texorpdfstring{$\ell$}{l}}\label{sec5.3}
Theorem \ref{th4.5} ensures that the stability and approximation properties of the weakly coupled MFE discretization is preserved in the presence of problems where regions contain fully degenerate Cosserat materials, i.e. regions of the domain where a standard linear elastic model is employed. In this final example, we therefore return to the same domain and manufactured solution as in Section \ref{sec5.1}, with $\lambda_\sigma=1$. However, in this section we consider a spatially varying $\ell(x)$, with a nontrivial region where $\ell(x)=0$. This thereby allows us to showcase the applicability of the weakly coupled MFE discretization to such composite materials, where regions of the material are a regular elastic material, and simultaneously verify the stability of the method in the presence of variability of $\ell$, including the region where $\ell=0$.

Concretely, we define the following spatially variable $\ell(x)$: 
\begin{align}\label{eq5.5}
\ell(x)=\min{\left(1,\max{\left(0,\max_{i=1\ldots3}{\left(3x_i-1\right)}\right)}\right)}.
\end{align}
See Figure \ref{fig5.6} for a 2D illustration of $\ell$ restricted to any plane defined by $x_3\in\left[0,1/3\ \right]$ (left), and for a view of the solution of the couple stress variable (right). We note that by this construction, $\ell\in C^0$ and $0\le\ell\le1$. Moreover, $\ell$ is piecewise linear with $| \nabla \ell | \le 3$ almost everywhere and thereby satisfies the conditions of Assumption \ref{a2.2b}. Note also that $\ell=0$ in the cube close to the origin defined by $\max_{i=1\ldots3}{\left(x_i\right)}\le1/3$, thus in this region the constitutive model corresponds to linear elasticity. 

The strongly coupled discretizations are not applicable to this problem, as $\|A_\omega\|$ is unbounded. Correspondingly, we only consider the weakly coupled discretizations of Theorem \ref{th4.5}, with the results being presented in Figure \ref{fig5.7}. 
The error of the WC-RT and WC-BDM methods are indistinguishable in the plot, as the couple stress error is (as expected) not dominating for this problem. 
We observe that the presence of a degenerate region does not affect the convergence of the method, and the theoretically expected rates are  achieved. It is important to emphasize that this result reflects the careful definition of norms, since in the elastic region the regularity of the rotations will in general only be $r\in L^2\mathbb{V}$. We summarize this as follows.

\begin{observation}[Robustness in transition to linear elasticity]\label{o5.3} 
    Under the conditions of Assumption \ref{a2.1} and \ref{a2.2b}, the weakly coupled MFE discretizations are convergent and stable in the presence of degenerate regions of the domain where the Cosserat model reduces to linear elasticity. 
\end{observation}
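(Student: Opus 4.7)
The approach is to recognize that Observation \ref{o5.3} is the empirical counterpart of Theorem \ref{th4.5} applied to a degenerate configuration, and so the justification has two components: first, checking that the setup of Section \ref{sec5.3} satisfies the hypotheses of Theorem \ref{th4.5}; second, numerically verifying the predicted rates on a sequence of refined meshes. For the first component I would verify, using the explicit formula \eqref{eq5.5}, that $\ell \in C^0(\Omega)$ is piecewise linear on a partition $\Theta_H$ consisting of the eight octants of $\Omega$ relative to the planes $x_i = 1/3$; is bounded by $0 \le \ell \le 1$; and satisfies $|\nabla \ell| \le 3$ almost everywhere. Together with the parameter choices \eqref{eq5.1} and the identification of the isotropic material of Example \ref{ex2.4}, this yields Assumptions \ref{a2.1} and \ref{a2.2b}. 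Since the mesh of Figure \ref{fig5.1} (right) is adapted to resolve $\Theta_H$, the full hypothesis of Theorem \ref{th4.5} holds.

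Given these, Theorem \ref{th4.5} immediately delivers $\ell$-robust discrete inf-sup, coercivity of $\tilde{\mathcal{A}}$ on the discrete kernel of $\mathcal{S}_\ell$ (via Lemma \ref{l4.6}, using the bound $|\nabla \ell| \le 3$), and the \emph{a priori} rate \eqref{eq4.17a} in the scaled variables. The observation then follows by exhibiting, for a sequence of refined meshes, errors that decay according to \eqref{eq4.17a}; the degenerate subregion plays no privileged role in the argument, because the analysis of Section \ref{sec4} was constructed precisely to accommodate $\ell = 0$ through the weighted space $H(\nabla\cdot \ell; \mathbb{M})$. The figure would be generated by recomputing the exact right-hand side for the manufactured pair \eqref{eq5.2}--\eqref{eq5.3} against the now-spatially-variable operator, and plotting the errors in the norms appearing on the left of \eqref{eq4.17a}.

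The main obstacle is regularity of the manufactured solution across the interface where $\ell$ transitions from positive to zero. In the Cosserat subregion the balance of angular momentum couples $r$ to $\nabla \cdot \omega$, whereas in the purely elastic subregion $r$ degenerates to a Lagrange multiplier enforcing $S\sigma = f_r$ weakly, so one only has $r \in L^2 \mathbb{V}$ there and the trace of $r$ across the interface is not controlled. Consequently, the standard approximation estimates in step 3 of Theorem \ref{th4.5} would fail if applied in an $H^1$-type norm for $r$; the saving grace is precisely that the error is measured in $L^2$ for $r$ and in $H(\nabla\cdot \ell; \mathbb{M})$ for $\tilde{\omega}$, both of which tolerate this lack of regularity. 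I would therefore emphasize this point in the supporting discussion, and as a sanity check verify numerically that the rates \emph{do} degrade if one attempts to measure the rotation error in a stronger norm, confirming that the observation relies essentially on the weighted norms developed in Section \ref{sec4.2}.
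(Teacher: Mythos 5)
Your proposal matches the paper's own justification: Section \ref{sec5.3} verifies exactly the properties of $\ell$ you list ($\ell\in C^0$, $0\le\ell\le 1$, piecewise linear with $|\nabla\ell|\le 3$, hence Assumption \ref{a2.2b}), invokes Theorem \ref{th4.5} for the $\ell$-robust rates, confirms them numerically in Figure \ref{fig5.7}, and makes the same closing point that the result hinges on measuring $r$ in $L^2$ and $\tilde\omega$ in the weighted norm because only $r\in L^2\mathbb{V}$ can be expected in the elastic subregion. The only quibble is that the partition $\Theta_H$ on which $\ell$ from \eqref{eq5.5} is piecewise linear is not the eight octants relative to the planes $x_i=1/3$ but a slightly finer polyhedral partition induced by the inner $\max$ over $i$ and the outer truncations; this does not affect the argument.
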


\begin{figure}[ht!]
    \centering
    \includegraphics[width=0.37\textwidth]{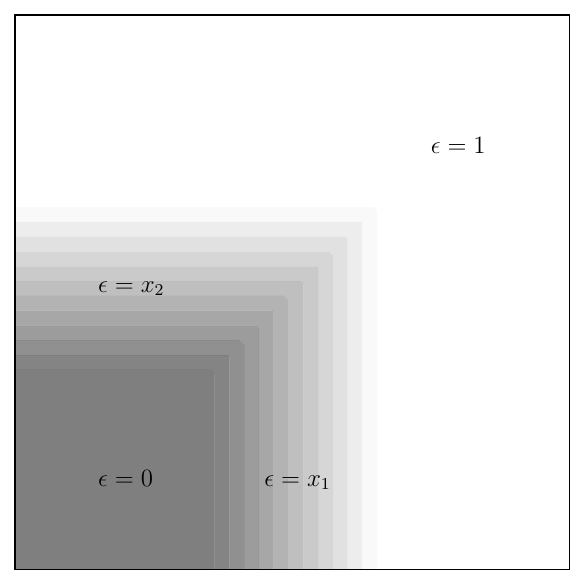}
    \includegraphics[width=0.5\textwidth]{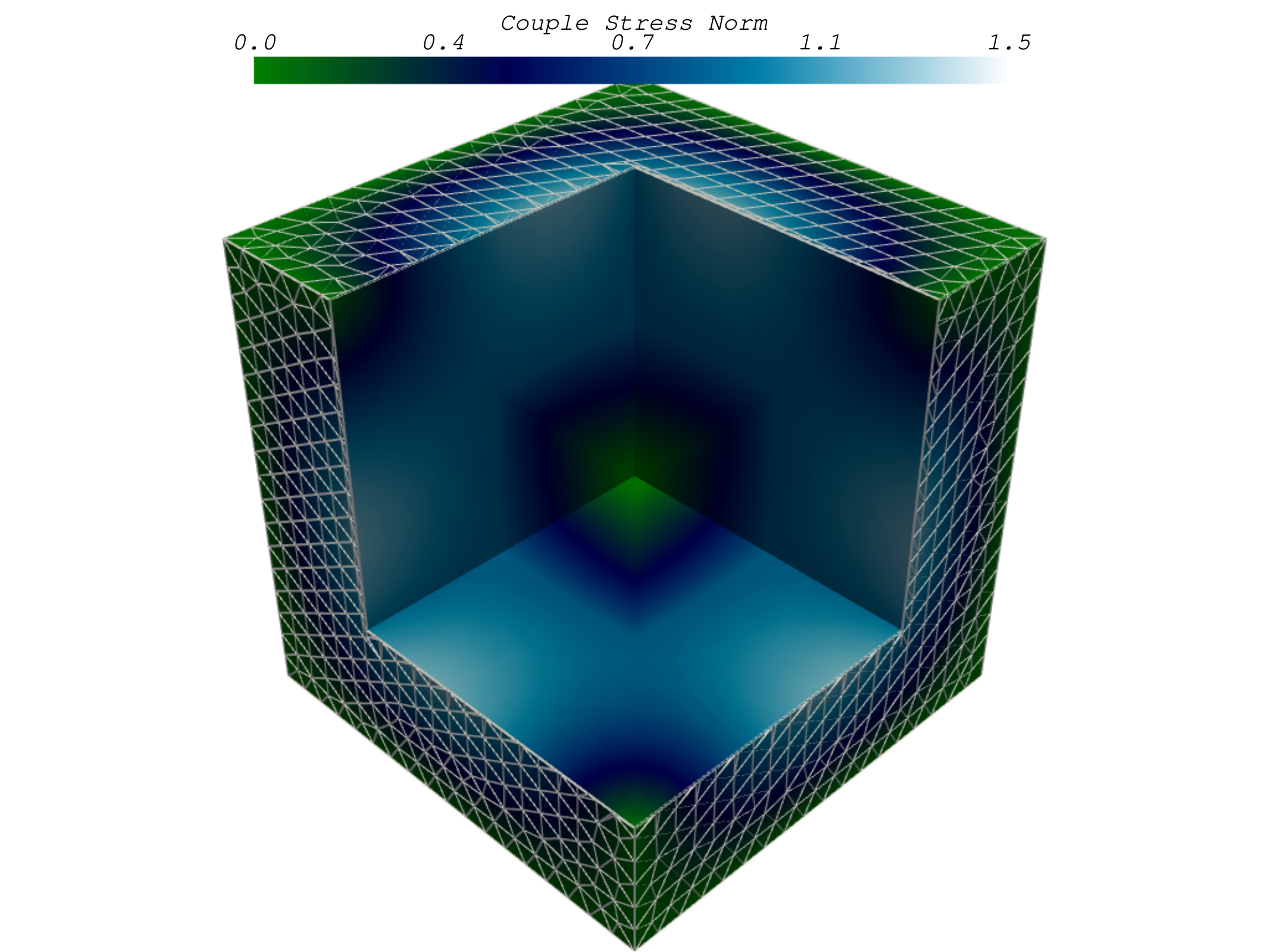}
    \caption{Left: Plot of spatially varying $\ell$ used in Section \ref{sec5.3}, as restricted to a plane defined by e.g. $x_i=0$.  Right: Plot of the (numerical) solution for the couple stress variable, seen facing the origin along the $x_1=x_2=x_3$ line. The blue square in the center of the cut-out indicates that the ${\tilde{\omega}}_h=0$ in the region of pure elasticity.}
    \label{fig5.6}
\end{figure}
 
\begin{figure}[ht!]
    \centering
    \includegraphics[width=0.49\textwidth]{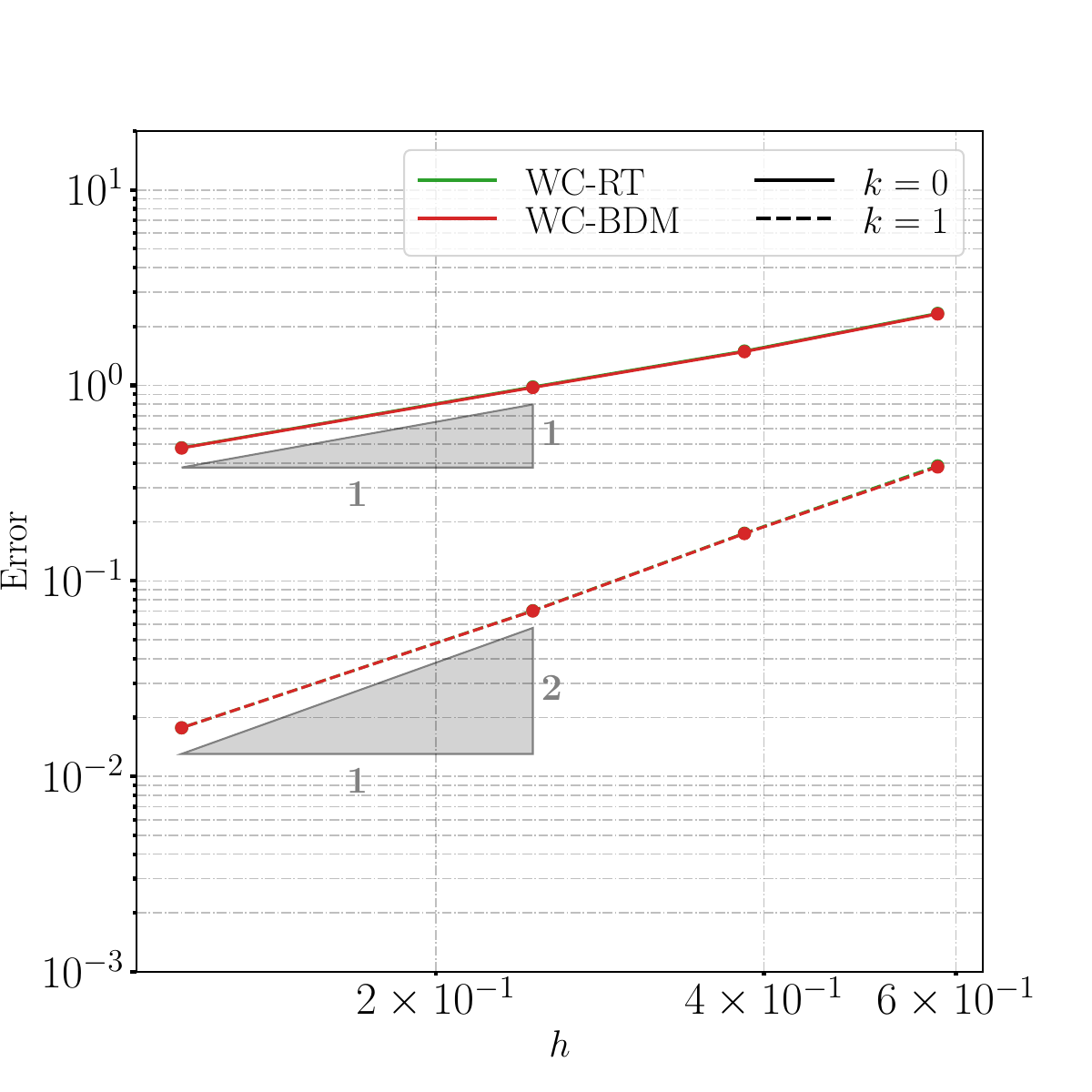}
    \includegraphics[width=0.49\textwidth]{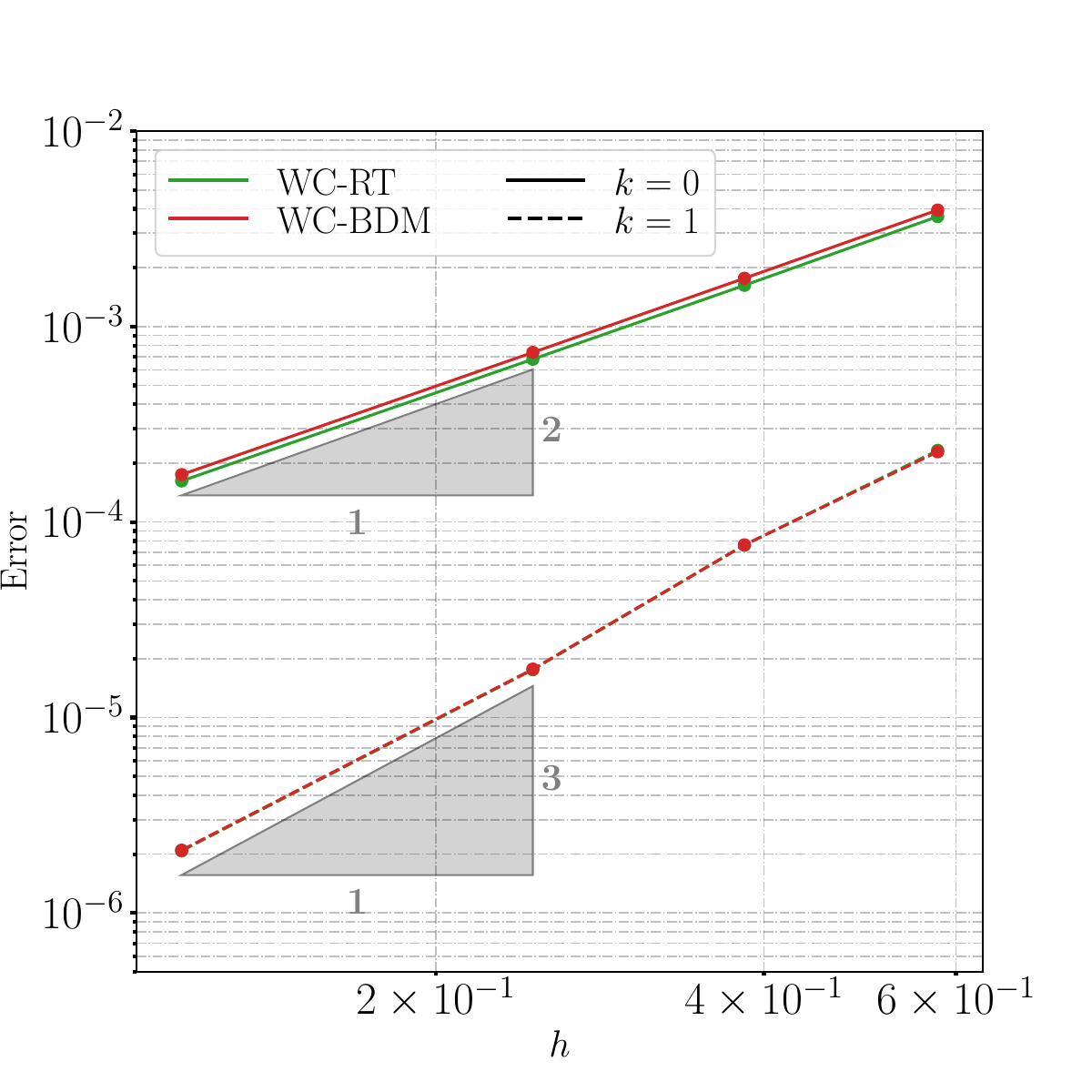}
    \caption{Left: Convergence data for the weakly coupled discretizations of Theorem \ref{th4.5} for spatially varying $\ell$, and $k\in\left\{0,1\right\}$. The error is calculated according to equations \eqref{eq4.17a}. Right: Results showing improved convergence rates according to the norms defined in equation \eqref{eq5.IC-norms3}.}
    \label{fig5.7}
\end{figure}

\section*{Reproducible data and figures}
 All data and figures presented herein can be reproduced using a Docker image that has been created and made publicly available \cite{DBN_DockerMFECosserat}. Source code is included in the Docker image for use, modification, and redistribution.

\section*{Acknowledgments}
The authors acknowledge the impact on this work of Inga Berre and Eirik Keilegavelen through many discussions on this and related topics. Omar Duran was supported by the European Research Council (ERC), under the European Union’s Horizon 2020 research and innovation program (grant agreement No 101002507).

\bibliographystyle{siamplain}
\bibliography{ex_article}
\end{document}